\newcommand{\xqedhere}[2]{%
  \rlap{\hbox to#1{\hfil\llap{\ensuremath{#2}}}}}
\newcommand{\pushright}[1]{\ifmeasuring@#1\else\omit\hfill$\displaystyle#1$\fi\ignorespaces}
\newcommand{\pushleft}[1]{\ifmeasuring@#1\else\omit$\displaystyle#1$\hfill\fi\ignorespaces}
\theoremstyle{plain}
\newtheorem{theorem}{Theorem}[section]
\newtheorem{lemma}[theorem]{Lemma}
\newtheorem{proposition}[theorem]{Proposition}
\newtheorem{corollary}[theorem]{Corollary}
\theoremstyle{definition}
\numberwithin{equation}{section}
\author{Khadijeh Alibabaei}
\address{Centro de Matem\'{a}tica e Departamento de Matem\'{a}tica, Faculdade de Ci\^{e}ncias,
Universidade do Porto,
Porto, Portugal}
{\email{f.alibabaee@gmail.com}}
\subjclass[2000]{20E18, 20M05, 20M07, 20F10, 20K01} 	
\begin{document}
\title{Every decidable pseudovariety of abelian groups is completely  tame}

\begin{abstract}
It has been shown that the proper, non-locally finite pseudovarieties of abelian groups are not tame with respect to the canonical signature.
In this paper, we show that every decidable, proper, non-locally finite pseudovariety of abelian groups is completely tame with respect to a further enlarged implicit signature. This theorem yields as a corollary  that a pseudovariety of abelian groups is decidable if and only if it is completely tame.
\end{abstract}
\maketitle

\smallskip
\noindent \textbf{Keywords.} {relatively free profinite semigroup, pseudovariety of semigroups,  system of equations, implicit signature, completely tame, completely reducible, rational constraint, $\sigma$-full, weakly reducible.}

\section{Introduction}
 By a \emph{pseudovariety} we mean a class of semigroups which is closed under taking subsemigroups, finite direct  products and homomorphic images. A pseudovariety is said to be \emph{decidable} if there is an algorithm to test membership of a finite semigroup;   otherwise, the pseudovariety is said to be \emph{undecidable}.  Eilenberg \cite{Eilenberg:1974} established a correspondence between varieties of
rational languages and pseudovarieties of finite semigroups which translates problems in language theory into the decidability of pseudovarieties of semigroups.

In general the decidability of pseudovarieties is not preserved by many operations on pseudovarieties such as semidirect product, join and Mal'cev product (\cite{Rhodes:1997c, Albert&Baldinger&Rhodes:1992}).
Almeida and Steinberg introduced a  refined version of decidability called  \emph{tameness} \cite{Jorge&Ben:2000}. The tameness property requires the \emph{reducibility } property which is  a generalization of the notion of inevitability that Ash introduced to prove the type II conjecture of Rhodes \cite{Ash:1991}.

There are various results using tameness of pseudovarieties to establish the decidability of pseudovarieties obtained by application of the operations of semidirect product, Mal'cev product and join \cite{Almeida&Costa&Teixeira:2010, Almeida&Azevedo&Zeitoun:1997, Almeida&Costa&Zeitoun:2004}.

Also there are connections between tameness and geometry and model theory \cite{Gitik&Rips:1995, Gitik:1999b, Herwig&Lascar:1997, Almeida&Delgado:1997,Almeida&Delgado:1999}. So, it is worth finding more examples of tame pseudovarieties.

It has been established that the locally finite pseudovarieties of finite abelian groups and the pseudovariety $\mathsf{Ab}$ of all finite abelian groups are tame \cite{Jorge&&Delgado:2005}, but in \cite{Delgado&Ben&Masuda:2007}, it has been shown  that the proper, non-locally finite pseudovarieties of abelian groups are  not tame with respect to the canonical signature.
In this paper we show that such  pseudovarieties are tame with respect to a further enlarged implicit signature $\sigma$. Since the free $\sigma$-subalgebra generated by a finite alphabet   is not the free abelian group, in Section 3, we  prove that the word problem is decidable in this free $\sigma$-subalgebra, meaning that there is an algorithm to decide whether two elements of this $\sigma$-algebra represent the same element.

 In \cite{Jorge&Ben:2000}, the notions of $\sigma$-\emph{full} and  \emph{weakly $\sigma$-reducible}  pseudovariety were introduced. By   \cite[Proposition 4.5]{Jorge&Ben:2000}, every $\sigma$-full weakly  $\sigma$-reducible pseudovariety is $\sigma$-reducible.
 We use this result and we show that every decidable, proper, non-locally finite  pseudovariety of abelian groups  is $\sigma$-reducible.

\section{Preliminaries}
A \emph{topological semigroup} is a semigroup $S$ endowed with a topology such that the basic semigroup
multiplication $S \times S \rightarrow S$ is continuous. We say that a topological semigroup $S$ is $A$-\emph{generated} if there is a mapping $\varphi : A \rightarrow S$ such that
$\varphi(A)$ generates a dense subsemigroup of $S$.

A \emph{pseudovariety} of semigroups is a class of finite semigroups closed under taking subsemigroups, homomorphic images, and direct products.
Given a pseudovariety  $\mathsf{V}$ of  semigroups, by a pro-$\mathsf{V}$ semigroup $S$ we mean a compact, zero dimensional semigroup which is residually
in $\mathsf{V}$, that is for
every two distinct points $s, t \in S$, there exists a continuous homomorphism $\varphi : S \rightarrow T$ into some
member $T\in \mathsf{V}$ such that $\varphi(s) \neq \varphi(t)$. Equivalently, a pro-$\mathsf{V}$ semigroup is a projective  limit of semigroups from $\mathsf{V}$.

  For  a finite alphabet $A$ and a pseudovariety  $\mathsf{V}$,  we denote by $\overline{\Omega}_A\mathsf{V}$ the free pro-$\mathsf{V}$ semigroup. The free pro-$\mathsf{V}$ semigroup has the universal property in the variety of pro-$\mathsf{V}$ semigroups in the sense   that,  for every mapping $\varphi : A \rightarrow S$ into a pro-$\mathsf{V}$
semigroup $S$, there exists a unique continuous homomorphism $\hat{\varphi} : \overline{\Omega}_A\mathsf{V}\rightarrow S$ such that the following
diagram commutes:
$$
\xymatrix{
A
\ar[r]^(.4)\iota
\ar[d]_\varphi
&
\overline{\Omega}_A\mathsf{V}
\ar[ld]^{\hat\varphi}
\\
S
}
$$
For an $A$-generated  pro-$\mathsf{V}$ semigroup $S$, we denote by $S^A$ the set of all functions from $A$ to $S$. To each element $w\in \overline{\Omega}_A\mathsf{V}$, we may associate an $A$-ary operation  $w_S:S^A\rightarrow S$: for every $\varphi\in S^A$, by the universal property of $\overline{\Omega}_A\mathsf{V}$, there is a unique extension $\hat\varphi:\overline{\Omega}_A\mathsf{V}\rightarrow S$.  Define $w_S(\varphi)=\hat\varphi(w)$. It is easy to see that
for every continuous
homomorphism $f : S \rightarrow T$ between pro-$\mathsf{V}$ semigroups, the
following diagram commutes:
$$
\xymatrix{
S^A
\ar[r]^{w_S}
\ar[d]_{f\circ-}
&
S
\ar[d]^{f}
\\
T^A
\ar[r]^{w_T}
&
T
}
$$
Operations with that property are called $A$-\emph{ary implicit operations}.
The element $w\in \overline{\Omega}_A\mathsf{V}$ is completely determined by the implicit operation $(w_S)_{S\in \mathsf{V}}$~\cite{Jorge:2002}. Note that, the elements of $A$ correspond to the component projects.

Let $\mathsf{S}$ be the pseudovariety of all finite semigroups. The elements of $(\overline{\Omega}_A\mathsf{S})^1$,
over arbitrary finite alphabets $A$, are called \emph{pseudowords}.
A \emph{pseudoidentity} is a formal equality of the form $u = v$ with $u, v \in \overline{\Omega}_A\mathsf{S}$ for
some finite alphabet $A$.
For a pseudovariety $\mathsf{V}$ of semigroups, we denote by $\psi_\mathsf{V}$ the unique continuous homomorphism
$\overline{\Omega}_A\mathsf{S}\rightarrow \overline{\Omega}_A\mathsf{V}$ which restricts to the identity on $A$.
We say that $\mathsf{V}$ satisfies the  pseudoidentity $u = v$ with $u, v \in \overline{\Omega}_A\mathsf{S}$ if $\psi_\mathsf{V}(u)=\psi_\mathsf{V}(v)$.

 By an \emph{implicit signature} we mean a set of implicit operations including
multiplication.
An important example is given by the canonical signature $\kappa$ consisting of the multiplication
and the unary operation $x \mapsto x^{\omega-1}$ which, to an element $s$ of a finite semigroup with $n$ elements,
associates the inverse of $s^{1+n!}$ in the cyclic subgroup generated by this power.

 Let $\sigma$ be an implicit signature. Under the natural interpretation of the
elements of $\sigma$, every profinite semigroup may be viewed as a $\sigma$-algebra in the
sense of universal algebra. The $\sigma$-subalgebra of
$\overline{\Omega}_{A}\mathsf{V}$ generated by $A$ is
denoted by ${\Omega}_{A}^{\sigma}\mathsf{V}$ and it is freely generated by $A$. We say that ${\Omega}_{A}^{\sigma}\mathsf{V}$ has  \emph{decidable word problem} if there is an algorithm to decide whether two pseudowords $u, v\in\overline{{\Omega}}_{A}\mathsf{S}$ with $\psi_{\mathsf{V}}(u),\psi_{\mathsf{V}}(v)\in {\Omega}_{A}^{\sigma}\mathsf{V}$   represent the same implicit  operation in   ${\Omega}_{A}^{\sigma}\mathsf{V}$.

For a subset $L$ of the free semigroup  ${\Omega}_A\mathsf{S}$ generated by $A$, we denote by $Cl_{\sigma}(L)$ and  $Cl(L)$  the closure of $L$ in ${\Omega}^{\sigma}_{A}\mathsf{S}$ and  $\overline{\Omega}_{A}\mathsf{S}$, respectively.

As it is mentioned in the introduction, the  property tameness requires the property reducibility. To define  reducibility we need a system of equations.
Let $X$  and $P$ be disjoint  finite sets,
whose elements will be the \emph{variables} and the \emph{parameters}  of the system, respectively.  Consider the following  system of equations:
\begin{equation}\label{eq20}
\begin{aligned}
u_i=v_i&&&&&&&& (i=1\ldots,m),
\end{aligned}
\end{equation}
with $u_i,v_i\in \overline{\Omega}_{X\cup P}\mathsf{S}$. We also fix a finite set $A$ and for every
$x \in X$, we choose a  rational subset $L_x\subseteq A^{*}$. For every parameter
$p \in P$, we associate an element $w_p\in\overline{\Omega}_{A}\mathsf{S}$. A \emph{solution} of the system \eqref{eq20} modulo $\mathsf{V}$ satisfying the constraints is  a continuous homomorphism $\delta:\overline{\Omega}_{X\cup P}\mathsf{S}\rightarrow \overline{\Omega}_{A}\mathsf{S}$
satisfying the following conditions:
\begin{enumerate}
  \item $\delta(x)\in Cl(L_x)$.
 \item $\delta(p)= w_p$.
  \item  $\mathsf{V}$ satisfies  the pseudoidentities  $\delta(u_i)=\delta(v_i)$ ($i=1\ldots,m$).
\end{enumerate}

\begin{theorem}\label{theorem4}\cite[Theorem 5.6.1]{Jorge:1994}
 Let $\mathsf{V}$ be a pseudovariety of finite semigroups.  The following conditions are equivalent for a finite system $\Sigma$ of equations with
rational constraints over the finite alphabet $A$:
\begin{itemize}
  \item $\Sigma$ has a solution in every $A$-generated finite semigroup in $\mathsf{V}$.
  \item $\Sigma$ has a solution in $\overline{\Omega}_A\mathsf{V}$.
\end{itemize}
\end{theorem}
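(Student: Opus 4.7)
The direction ``solution in $\overline{\Omega}_A\mathsf{V}$ $\Rightarrow$ solution in every $A$-generated finite $S\in\mathsf{V}$'' is immediate: given a continuous homomorphism $\delta:\overline{\Omega}_{X\cup P}\mathsf{S}\to\overline{\Omega}_A\mathsf{V}$ satisfying the three conditions, and the canonical continuous surjection $\pi_S:\overline{\Omega}_A\mathsf{V}\to S$ determined by the generating map $A\to S$, the composite $\pi_S\circ\delta$ is a solution in $S$. The rational constraint passes through because $\pi_S\bigl(\psi_{\mathsf V}(Cl(L_x))\bigr)=\pi_S\bigl(\psi_{\mathsf V}(L_x)\bigr)$ (the continuous image of the closure equals the image when the codomain is discrete), and the pseudoidentity and parameter conditions are preserved tautologically by a continuous homomorphism.

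For the substantive converse I would use a profinite compactness argument. Let $\mathcal F$ be the directed system of $A$-generated finite members of $\mathsf V$, with canonical continuous surjections $\pi_{S,S'}:S\twoheadrightarrow S'$, so that $\overline{\Omega}_A\mathsf{V}=\varprojlim_{S\in\mathcal F}S$ and $(\overline{\Omega}_A\mathsf{V})^{X\cup P}=\varprojlim_{S\in\mathcal F}S^{X\cup P}$. For each $S\in\mathcal F$, let $T_S\subseteq S^{X\cup P}$ be the (finite) set of maps $f$ satisfying: (i) $f(x)\in\pi_S(\psi_{\mathsf V}(L_x))$ for every $x\in X$; (ii) $f(p)=\pi_S(\psi_{\mathsf V}(w_p))$ for every $p\in P$; (iii) the unique continuous extension $\hat f:\overline{\Omega}_{X\cup P}\mathsf{S}\to S$ satisfies $\hat f(u_i)=\hat f(v_i)$ for all $i$. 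By hypothesis each $T_S$ is nonempty, and postcomposition by $\pi_{S,S'}$ yields compatible restrictions $T_S\to T_{S'}$, so $(T_S)_{S\in\mathcal F}$ is an inverse system of nonempty finite sets. Its inverse limit is nonempty by the classical compactness principle, and any element defines a map $X\cup P\to\overline{\Omega}_A\mathsf{V}$ whose unique continuous extension $\delta$ is the desired solution.

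\emph{The main obstacle} is verifying that the constraints defining $T_S$ genuinely translate back to the three conditions on $\delta$. Conditions (ii) and (iii) are routine: parameter values match pointwise, while $\delta(u_i)=\delta(v_i)$ follows because the equality holds in every finite quotient and $\overline{\Omega}_A\mathsf{V}$ is residually in $\mathsf V$. The rational constraint requires the identity
\[
\psi_{\mathsf V}(Cl(L_x))=\bigcap_{S\in\mathcal F}\pi_S^{-1}\bigl(\pi_S(\psi_{\mathsf V}(L_x))\bigr),
\]
which follows from two general facts about the profinite semigroup $\overline{\Omega}_A\mathsf{V}$: every closed subset is the intersection of the preimages under the $\pi_S$'s of its images in the finite quotients, and $\pi_S(\psi_{\mathsf V}(L_x))=\pi_S(\psi_{\mathsf V}(Cl(L_x)))$ since the right-hand side is the continuous image of a compact set into a discrete space and already contains the left.
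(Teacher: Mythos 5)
The paper does not prove this statement; it is cited verbatim from \cite[Theorem 5.6.1]{Jorge:1994}, so there is no in-paper proof to compare against. Your argument — the easy forward direction by composing with the canonical projections $\pi_S$, and the converse via an inverse system of nonempty finite solution-sets $T_S$ whose nonempty inverse limit yields a compatible family, hence a continuous solution, with the rational constraint recovered from the identity $Cl_{\mathsf V}(\psi_{\mathsf V}(L_x))=\bigcap_S\pi_S^{-1}\bigl(\pi_S(\psi_{\mathsf V}(L_x))\bigr)$ valid because the sets $\pi_S^{-1}(\{s\})$ form a clopen basis of the profinite space — is correct and is essentially the standard compactness proof given in Almeida's book.

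Two small points worth being explicit about, though neither is a gap. First, in the forward direction, the equality $\pi_S\bigl(\psi_{\mathsf V}(Cl(L_x))\bigr)=\pi_S\bigl(\psi_{\mathsf V}(L_x)\bigr)$ uses both that the domain is compact (so the image of the closure is closed) and that the codomain is finite discrete (so $\overline{f(A)}=f(A)$), combined with $f(\overline A)\subseteq\overline{f(A)}$; you state the conclusion but the two-sided containment is worth a sentence. Second, one should note that $\mathcal F$ needs to be taken as a set of representatives of isomorphism classes of $A$-generated members of $\mathsf V$ (a genuine set, not a proper class), directed under the quotient relation — this is implicit in the profinite formalism but is the point that makes the inverse limit legitimate.
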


Let $\sigma$ be an implicit signature.
Consider a system of the form \eqref{eq20}, with   constraints $L_x\subseteq A^*$ ($x\in X$)   where  $u_i$ and $v_i$ are pseudowords  in ${\Omega}^{\sigma}_{X\cup P}\mathsf{S}$ and for every parameter $p\in P$, $w_p\in  {\Omega}^{\sigma}_{A}\mathsf{S}$. Assume that this system has a solution modulo $\mathsf{V}$. A pseudovariety $\mathsf{V}$  is said to be \emph{$\sigma$-reducible}  for this system if it has a solution $\delta:\overline{\Omega}_{X\cup P}\mathsf{S}\rightarrow {\Omega}_{A}^\sigma \mathsf{S}$ modulo $\mathsf{V}$. We say that $\mathsf{V}$ is \emph{completely
$\sigma$-reducible} if it is $\sigma$-reducible for every such system.

\begin{proposition}\label{proposition1}\cite[Proposition 3.1]{Jorge&Zeitoun&Carlos:2007}
  Let $\mathsf{V}$ be a pseudovariety. If $\mathsf{V}$ is $\sigma$-reducible
respect to the  systems of equations of $\sigma$-terms without parameters, then $\mathsf{V}$ is completely
$\sigma$-reducible.
\end{proposition}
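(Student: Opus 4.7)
My plan is to reduce any $\sigma$-system with parameters to a parameter-free $\sigma$-system by encoding each alphabet letter occurring in the parameter values as a fresh variable equipped with a singleton rational constraint. Concretely, suppose that $u_i = v_i$ ($i = 1, \dots, m$) is a system of $\sigma$-terms in $\Omega_{X \cup P}^{\sigma}\mathsf{S}$ with rational constraints $L_x \subseteq A^{*}$ ($x \in X$) and parameter values $w_p \in \Omega_A^{\sigma}\mathsf{S}$ ($p \in P$), admitting a solution modulo $\mathsf{V}$.

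Introduce a disjoint copy $\tilde{A} = \{\tilde{a} : a \in A\}$ of the alphabet, viewed as new variables, and for each $p \in P$ let $\tilde{w}_p \in \Omega_{\tilde{A}}^{\sigma}\mathsf{S}$ be the $\sigma$-term obtained from $w_p$ by the letter renaming $a \mapsto \tilde{a}$. Consider the system over the variable set $Y = X \cup \tilde{A}$ whose equations are $u_i[p \mapsto \tilde{w}_p] = v_i[p \mapsto \tilde{w}_p]$, with constraints $L_x$ for $x \in X$ and $\{a\}$ for $\tilde{a} \in \tilde{A}$. Since $\sigma$-term substitution preserves $\sigma$-terms, this is a parameter-free system of $\sigma$-terms. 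Moreover, any solution $\gamma$ of the original system extends, via $\tilde{a} \mapsto a$, to a solution of the new system modulo $\mathsf{V}$: the singleton constraints on $\tilde{A}$ are met trivially, and the property that continuous homomorphisms commute with $\sigma$-operations yields $\gamma(\tilde{w}_p) = w_p = \gamma(p)$, so the substituted equations hold modulo $\mathsf{V}$.

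The hypothesis of the proposition then provides a $\sigma$-solution $\delta_0 \colon \overline{\Omega}_Y\mathsf{S} \to \Omega_A^{\sigma}\mathsf{S}$ of this parameter-free system modulo $\mathsf{V}$, respecting the constraints. The singleton constraints force $\delta_0(\tilde{a}) = a$ for each $a \in A$, hence $\delta_0(\tilde{w}_p) = w_p$ by the same commutation property of continuous homomorphisms with $\sigma$-operations. Define $\delta \colon \overline{\Omega}_{X \cup P}\mathsf{S} \to \Omega_A^{\sigma}\mathsf{S}$ by $\delta(x) = \delta_0(x)$ for $x \in X$ and $\delta(p) = w_p$ for $p \in P$. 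Then $\delta(u_i) = \delta_0(u_i[p \mapsto \tilde{w}_p])$ and analogously for $v_i$, so $\mathsf{V}$ satisfies $\delta(u_i) = \delta(v_i)$; the constraints $\delta(x) \in Cl(L_x)$ and $\delta(p) = w_p$ hold by construction, so $\delta$ is the required $\sigma$-solution.

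The step that will require the most care is verifying the substitution identity $\delta(u_i) = \delta_0(u_i[p \mapsto \tilde{w}_p])$, which depends on the fact that both $\delta$ and $\delta_0$ are continuous homomorphisms between profinite semigroups and therefore commute with the $\sigma$-operations used to build $w_p$ (and so $\tilde{w}_p$). Once this is pinned down, both the forward encoding of the parametered system and the backward extraction of the $\sigma$-solution amount to routine substitution bookkeeping via the universal property of $\overline{\Omega}_Y\mathsf{S}$ and $\overline{\Omega}_{X \cup P}\mathsf{S}$.
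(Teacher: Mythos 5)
Your proof is correct and follows the standard reduction used in the cited reference \cite[Proposition 3.1]{Jorge&Zeitoun&Carlos:2007}: eliminate each parameter by substituting its $\sigma$-term value into the equations after renaming the alphabet letters to fresh variables carrying singleton rational constraints, then recover the $\sigma$-solution of the original system from the one supplied by the hypothesis via the identification $\tilde{a}\mapsto a$. The paper states this proposition as a citation without reproducing a proof, and your argument faithfully reconstructs the known one.
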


We say that a pseudovariety $\mathsf{V}$ is \emph{$\sigma$-tame} with respect to a class $\mathfrak{C}$ of systems of equations if
the following conditions hold:
\begin{itemize}
 \item  for every system of equations in $\mathfrak{C}$, the pseudovariety $\mathsf{V}$ is $\sigma$-reducible;
 \item  the word problem is decidable in $\Omega_A^{\sigma}\mathsf{V}$;
\item $\mathsf{V}$ is recursively enumerable, in the sense  that there is some Turing machine which outputs
successively  representatives of all the isomorphism classes of members of $\mathsf{V}$.
\end{itemize}
We say that $\mathsf{V}$ is \emph{completely $\sigma$-tame} if $\mathsf{V}$ is  $\sigma$-tame for every class of systems of equations. Some  important tameness results are as follows:
\begin{itemize}
  \item The pseudovariety $\mathsf{G}$ of all finite groups is $\kappa$-tame with respect to the systems of equations associated with finite
directed graphs \cite{Jorge&Ben:2000,Ash:1991}.  It follows from the results of Coulbois and Kh\'{e}lif
 that $\mathsf{G}$ is not completely $\kappa$-tame \cite{Coulbois&&Khelif}.
 \item For a prime number $p$, the pseudovariety $\mathsf{G}_p$ of all finite $p$-groups is tame  with respect to the systems of equations associated with finite directed graphs, but not $\kappa$-tame \cite{Jorge:2002}.
  \item The pseudovariety $\mathsf{Ab}$ of all finite abelian groups is completely $\kappa$-tame \cite{Jorge&&Delgado:2005}.
   \end{itemize}

Let $\mathsf{V}$ be a pseudovariety.
Consider  a system of equations (\ref{eq20}) with   constraints $L_x\subseteq A^*$  ($x\in X$)   where  $u_i$ and $v_i$ are pseudowords  in ${\Omega}^{\sigma}_{X\cup P}\mathsf{S}$ and for every $p\in P$, $\psi_{\mathsf{V}}(w_p)\in {\Omega}_{A}^\sigma \mathsf{V}$.  The pseudovariety $\mathsf{V}$ is said to be  \emph{weakly $\sigma$-reducible} with respect to this system if in the case it has a solution modulo $\mathsf{V}$, then there is a solution $\delta:\overline{\Omega}_{X\cup P}\mathsf{S}\rightarrow \overline{\Omega}_{A} \mathsf{S}$ modulo $\mathsf{V}$ which satisfies the conditions $\psi_{\mathsf{V}}(\delta(x))\in {\Omega}_{A}^\sigma \mathsf{V}$ ($x\in X$). It is obvious that if a pseudovariety $\mathsf{V}$ is $\sigma$-reducible, then it is weakly $\sigma$-reducible but the converse is not true. For a prime number $p$, the pseudovariety $\mathsf{G}_p$ of all finite $p$ groups is weakly $\kappa$-reducible but it is not $\kappa$-reducible \cite{Steinberg:1998a, Jorge&Ben:2000}.

We say that a pseudovariety $\mathsf{V}$ is $\sigma$-\emph{full} if for every rational subset
$L \subseteq A^*$, the set $\psi_\mathsf{V}(Cl_{\sigma}(L))$ is closed in ${\Omega}_{A}^{\sigma}\mathsf{V}$.

\begin{proposition}\label{proposition2}\cite[Proposition 4.5]{Jorge&Ben:2000}
Every $\sigma$-full weakly  $\sigma$-reducible pseudovariety  is $\sigma$-reducible.
\end{proposition}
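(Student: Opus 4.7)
The plan is to start from a system satisfying the hypotheses for $\sigma$-reducibility (so, in particular, $w_p\in\Omega_A^\sigma\mathsf{S}$ for each parameter $p$) which has a solution modulo $\mathsf{V}$, and to upgrade a weakly $\sigma$-reducible solution to one of the required strength. Because $w_p\in\Omega_A^\sigma\mathsf{S}$ implies $\psi_\mathsf{V}(w_p)\in\Omega_A^\sigma\mathsf{V}$, the hypotheses for weak $\sigma$-reducibility are also satisfied. Applying it, I obtain a continuous homomorphism $\delta:\overline{\Omega}_{X\cup P}\mathsf{S}\rightarrow\overline{\Omega}_A\mathsf{S}$ with $\delta(x)\in Cl(L_x)$ for $x\in X$, $\delta(p)=w_p$ for $p\in P$, $\mathsf{V}$ satisfying each pseudoidentity $\delta(u_i)=\delta(v_i)$, and $\psi_\mathsf{V}(\delta(x))\in\Omega_A^\sigma\mathsf{V}$.

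The central step is to replace each $\delta(x)$ by an element $\delta'(x)\in Cl_\sigma(L_x)$ having the same image under $\psi_\mathsf{V}$; equivalently, to prove $\psi_\mathsf{V}(\delta(x))\in\psi_\mathsf{V}(Cl_\sigma(L_x))$. Continuity of $\psi_\mathsf{V}$ together with $\delta(x)\in Cl(L_x)$ places $\psi_\mathsf{V}(\delta(x))$ in the closure of $\psi_\mathsf{V}(L_x)$ inside $\overline{\Omega}_A\mathsf{V}$. Since this point also lies in $\Omega_A^\sigma\mathsf{V}$ by weak $\sigma$-reducibility, it lies in the closure of $\psi_\mathsf{V}(L_x)$ inside $\Omega_A^\sigma\mathsf{V}$ with the subspace topology. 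By $\sigma$-fullness, $\psi_\mathsf{V}(Cl_\sigma(L_x))$ is a closed subset of $\Omega_A^\sigma\mathsf{V}$ which already contains $\psi_\mathsf{V}(L_x)$, and therefore contains that closure. Hence $\psi_\mathsf{V}(\delta(x))\in\psi_\mathsf{V}(Cl_\sigma(L_x))$, and I may pick any preimage $\delta'(x)\in Cl_\sigma(L_x)\subseteq\Omega_A^\sigma\mathsf{S}$.

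Next, setting $\delta'(p)=w_p$ for $p\in P$ and invoking the universal property of $\overline{\Omega}_{X\cup P}\mathsf{S}$, I extend $\delta'$ to a continuous homomorphism $\overline{\Omega}_{X\cup P}\mathsf{S}\rightarrow\overline{\Omega}_A\mathsf{S}$. Since every element of $X\cup P$ is mapped inside the $\sigma$-subalgebra $\Omega_A^\sigma\mathsf{S}$, the restriction of $\delta'$ to $\Omega_{X\cup P}^\sigma\mathsf{S}$ takes values in $\Omega_A^\sigma\mathsf{S}$, as required for a $\sigma$-reducible solution. Condition (1) holds because $Cl_\sigma(L_x)\subseteq Cl(L_x)$, and (2) holds by construction. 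For (3), the composites $\psi_\mathsf{V}\circ\delta$ and $\psi_\mathsf{V}\circ\delta'$ agree on the generating set $X\cup P$, hence on all of $\overline{\Omega}_{X\cup P}\mathsf{S}$ by continuity and the universal property, so $\psi_\mathsf{V}(\delta'(u_i))=\psi_\mathsf{V}(\delta(u_i))=\psi_\mathsf{V}(\delta(v_i))=\psi_\mathsf{V}(\delta'(v_i))$.

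The main obstacle, and precisely the step where $\sigma$-fullness is unavoidable, is the central one: passing from a preimage of $\psi_\mathsf{V}(\delta(x))$ in the full pro-$\mathsf{S}$ closure $Cl(L_x)$ to one in the smaller $\sigma$-closure $Cl_\sigma(L_x)$. Without the closedness of $\psi_\mathsf{V}(Cl_\sigma(L_x))$ in $\Omega_A^\sigma\mathsf{V}$, a point of $\Omega_A^\sigma\mathsf{V}$ lying in the $\overline{\Omega}_A\mathsf{V}$-closure of $\psi_\mathsf{V}(L_x)$ need not be captured by any element of $Cl_\sigma(L_x)$, and the upgrade from weak to full reducibility would fail.
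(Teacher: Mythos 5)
Your proof is correct and is essentially the standard argument for this result (the paper itself only cites \cite[Proposition~4.5]{Jorge&Ben:2000} rather than reproving it): apply weak $\sigma$-reducibility to get a solution $\delta$ with $\psi_{\mathsf{V}}(\delta(x))\in\Omega_A^\sigma\mathsf{V}$, use compactness to identify $\psi_{\mathsf{V}}(Cl(L_x))$ with the closure of $\psi_{\mathsf{V}}(L_x)$ in $\overline{\Omega}_A\mathsf{V}$, invoke $\sigma$-fullness to conclude $\psi_{\mathsf{V}}(\delta(x))\in\psi_{\mathsf{V}}(Cl_\sigma(L_x))$, and then replace $\delta(x)$ by a preimage in $Cl_\sigma(L_x)$, which preserves conditions (1)--(3) since $\psi_{\mathsf{V}}\circ\delta$ and $\psi_{\mathsf{V}}\circ\delta'$ agree on generators. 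All the key steps, including why the modified homomorphism carries $\Omega_{X\cup P}^\sigma\mathsf{S}$ into $\Omega_A^\sigma\mathsf{S}$, are justified correctly.
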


A supernatural number $\pi$ is a function $\pi:\{\text{all prime numbers}\}\rightarrow \mathbb{N}\cup\{\infty\}$. We say that a supernatural number $\pi'$ divides a supernatural number $\pi$, if for every prime number $p$, we have ${\pi'}(p)\leq \pi(p)$.
The set of all supernatural numbers is a complete lattice ordered
by this relation.

 We say that $\pi$ is \emph{recursive}
if for every prime number $p$, $\pi(p)$ is computable.
For a supernatural number $\pi$, we denote by $P_\pi$ the set of all prime numbers $p$ such that $\pi(p)$ is a natural number.

  To a supernatural number $\pi$, we may associate  the pseudovariety  $\mathsf{Ab}_\pi$ of all finite abelian groups whose exponents  divide $\pi$. Conversely, to every pseudovariety $\mathsf{V}$ of abelian finite  groups, we may associate the supernatural number $\pi_{\mathsf{V}}=\mathrm{lcm}\left(\{n\mid \mathbb{Z}/n\mathbb{Z}\in \mathsf{V}\}\right)$.
\begin{lemma}\label{lemma1}\cite[Corollary 3.3]{Ben:1999}
The mapping  $\pi\mapsto \mathsf{Ab}_\pi$ and $\mathsf{V}\mapsto \pi_\mathsf{V}$ are inverse lattice isomorphisms. Furthermore, $\pi$ is recursive if and only if $\mathsf{Ab}_\pi$ has decidable membership problem.
\end{lemma}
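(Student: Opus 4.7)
The plan is to establish the lattice isomorphism first and then derive the recursiveness/decidability equivalence, both relying crucially on the classification of finite abelian groups.

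For the lattice isomorphism, I would verify first that the two maps are well defined: $\mathsf{Ab}_\pi$ is clearly a pseudovariety (exponents divide $\pi$ is preserved by subgroups, quotients and finite products), and $\pi_\mathsf{V}$ is a supernatural number essentially by definition. Next I would show that both compositions are the identity. The composition $\pi \mapsto \mathsf{Ab}_\pi \mapsto \pi_{\mathsf{Ab}_\pi}$ is immediate: $\mathbb{Z}/n\mathbb{Z} \in \mathsf{Ab}_\pi$ iff $n\mid\pi$, and the lcm of all divisors of $\pi$ is $\pi$ itself. For $\mathsf{V} \mapsto \pi_\mathsf{V} \mapsto \mathsf{Ab}_{\pi_\mathsf{V}}$, I would argue that the set $S_\mathsf{V} = \{n : \mathbb{Z}/n\mathbb{Z}\in\mathsf{V}\}$ is downward closed under divisibility (quotients) and closed under $\mathrm{lcm}$ (since $\mathbb{Z}/\mathrm{lcm}(m,n)\mathbb{Z}$ embeds into $\mathbb{Z}/m\mathbb{Z}\times\mathbb{Z}/n\mathbb{Z}$), hence $S_\mathsf{V} = \{n : n\mid \pi_\mathsf{V}\}$. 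Then the structure theorem for finite abelian groups yields $G\in\mathsf{V}$ iff every cyclic factor of $G$ is in $\mathsf{V}$, iff the exponent of $G$ divides $\pi_\mathsf{V}$, iff $G\in\mathsf{Ab}_{\pi_\mathsf{V}}$. Order preservation in both directions is then transparent: $\pi'\mid\pi \iff \mathsf{Ab}_{\pi'}\subseteq\mathsf{Ab}_\pi$, and being inverse order-preserving bijections on complete lattices, the maps automatically preserve joins and meets.

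For the recursiveness-decidability equivalence, the forward direction is straightforward: given a recursive $\pi$ and a finite group $G$, one decides membership in $\mathsf{Ab}_\pi$ by first testing commutativity, then for each prime $p$ dividing $|G|$ computing the $p$-part $e_p$ of the exponent of $G$ and comparing with $\pi(p)$ (which is available by recursiveness). For the converse, assuming decidable membership, I would compute $\pi(p)$ by successively testing whether $\mathbb{Z}/p^k\mathbb{Z}\in\mathsf{Ab}_\pi$ for $k=1,2,\ldots$, since $\pi(p)$ is exactly the supremum of such $k$.

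The main subtle point is the handling of the value $\infty$ in the converse implication: if $\pi(p)=\infty$, the naive enumeration never halts, so care is needed in interpreting "computable" for a function into $\mathbb{N}\cup\{\infty\}$. The standard reading is that $\pi$ is recursive when the predicate $(p,k)\mapsto[\pi(p)\ge k]$ is decidable, which is exactly what the enumeration above provides and what the forward direction uses; with this interpretation the equivalence is clean. Since the statement is cited from prior work, I would simply adopt this interpretation and note briefly why it is the appropriate notion in the pseudovariety setting.
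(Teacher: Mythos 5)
The paper cites this result from Steinberg \cite{Ben:1999} without reproducing a proof, so there is no internal argument to compare against; your proof is correct. The lattice-isomorphism part uses the expected ingredients: well-definedness of both maps, the structure theorem for finite abelian groups, downward-closure and lcm-closure of $S_{\mathsf{V}}=\{n:\mathbb{Z}/n\mathbb{Z}\in\mathsf{V}\}$, and the standard fact that a pair of inverse order-preserving bijections between lattices automatically preserve meets and joins. One step you compress, namely ``hence $S_{\mathsf{V}}=\{n:n\mid\pi_{\mathsf{V}}\}$'', needs the small intermediate observation that every prime power $p^k$ dividing $\pi_{\mathsf{V}}$ already lies in $S_{\mathsf{V}}$ (since $p^k$ divides some element of $S_{\mathsf{V}}$ by the definition of the lcm, and $S_{\mathsf{V}}$ is closed under divisors), after which lcm-closure yields every divisor $n$ of $\pi_{\mathsf{V}}$; this is a routine elaboration, not a gap. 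Your remark on recursiveness identifies the genuinely delicate point and resolves it correctly: the ``if and only if'' holds precisely under the reading that the predicate $(p,k)\mapsto[\pi(p)\ge k]$ is decidable, whereas a stronger notion in which one can also certify $\pi(p)=\infty$ would make only the forward implication recoverable from a membership oracle, since testing $\mathbb{Z}/p^k\mathbb{Z}\in\mathsf{Ab}_\pi$ for $k=1,2,\ldots$ can confirm lower bounds on $\pi(p)$ but never establishes that no upper bound exists.
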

\begin{corollary}\label{cor3}
  Let $\pi=\pi_1\pi_2$ be a supernatural number where $\pi_1$ and $\pi_2$ are relatively prime supernatural numbers. Then the pseudoidentity $u=v$ is valid in ${\mathsf{Ab}}_\pi$ if and only if the pseudoidentity $u=v$ is valid in both ${\mathsf{Ab}}_{\pi_1}$ and ${\mathsf{Ab}}_{\pi_2}$.
\end{corollary}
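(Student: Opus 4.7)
The proof plan is a straightforward application of the primary decomposition of finite abelian groups, together with the naturality of implicit operations under direct products.

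The forward direction is immediate from Lemma \ref{lemma1}: since $\pi_i$ divides $\pi$ for $i=1,2$, the lattice isomorphism gives $\mathsf{Ab}_{\pi_i}\subseteq \mathsf{Ab}_\pi$, so any pseudoidentity valid in $\mathsf{Ab}_\pi$ is valid in each $\mathsf{Ab}_{\pi_i}$.

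For the converse, assume $u=v$ holds in both $\mathsf{Ab}_{\pi_1}$ and $\mathsf{Ab}_{\pi_2}$, and let $G\in\mathsf{Ab}_\pi$ be a finite abelian group; I need to show $u_G=v_G$. The key observation is that, since $\pi_1$ and $\pi_2$ are coprime, the sets $P_{\pi_1}$ and $P_{\pi_2}$ of primes effectively dividing each are disjoint, and their union contains every prime dividing $|G|$ (because the exponent of $G$ divides $\pi=\pi_1\pi_2$). By the primary decomposition of finite abelian groups, $G$ splits as $G=G_1\oplus G_2$, where $G_i$ is the direct sum of the $p$-primary components of $G$ for $p\in P_{\pi_i}$. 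By construction, $G_i\in \mathsf{Ab}_{\pi_i}$.

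Next, I use the fact that implicit operations are natural with respect to continuous homomorphisms, and in particular with respect to the canonical projections of a direct product. Applying the commuting square defining an implicit operation to the projections $G\to G_1$ and $G\to G_2$ shows that $w_G$ is evaluated componentwise for any pseudoword $w$, i.e.\ $w_G=(w_{G_1},w_{G_2})$. Hence
\[
u_G=(u_{G_1},u_{G_2})=(v_{G_1},v_{G_2})=v_G,
\]
using the hypothesis that $u_{G_i}=v_{G_i}$ in $G_i\in\mathsf{Ab}_{\pi_i}$.

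The argument is almost entirely bookkeeping; the only mildly nontrivial point to check carefully is the verification that $G_i\in \mathsf{Ab}_{\pi_i}$, which relies on the coprimality of $\pi_1$ and $\pi_2$ to ensure that the $p$-primary components of $G$ are distributed disjointly between $G_1$ and $G_2$, and that the exponent of each $G_i$ indeed divides $\pi_i$.
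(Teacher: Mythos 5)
Your proof is correct, but it follows a genuinely different route from the one the paper intends. The paper gives no explicit proof; the corollary is stated immediately after Lemma~\ref{lemma1}, and the intended argument is purely lattice-theoretic: since $\pi_1$ and $\pi_2$ are coprime, $\pi_1\pi_2=\mathrm{lcm}(\pi_1,\pi_2)$, so by the lattice isomorphism of Lemma~\ref{lemma1} one has $\mathsf{Ab}_\pi=\mathsf{Ab}_{\pi_1}\vee\mathsf{Ab}_{\pi_2}$, and a pseudoidentity holds in a join $\mathsf{V}_1\vee\mathsf{V}_2$ if and only if it holds in both $\mathsf{V}_1$ and $\mathsf{V}_2$ (because the class of finite semigroups satisfying a given pseudoidentity is itself a pseudovariety). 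Your proof instead unwinds everything to the level of individual groups: you take $G\in\mathsf{Ab}_\pi$, use the primary decomposition to split $G=G_1\oplus G_2$ with $G_i\in\mathsf{Ab}_{\pi_i}$, and apply naturality of implicit operations with respect to the coordinate projections. This is more self-contained, since it avoids invoking the general fact about pseudoidentities and joins, at the cost of being longer than the one-line lattice argument; it also makes the coprimality hypothesis visibly do the work (it is what makes the primary decomposition line up with the two pseudovarieties). One small notational caution: you write ``the sets $P_{\pi_1}$ and $P_{\pi_2}$ of primes effectively dividing each,'' but the paper already reserves $P_\pi$ for the set of primes $p$ with $\pi(p)$ \emph{finite}, which is a different set; what you actually use, and should denote differently, is the set of primes $p$ with $\pi_i(p)>0$. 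This clash is cosmetic and does not affect the validity of the argument.
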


\begin{proposition}\cite[Proposition 3.4]{Ben:1999}
Let $\pi$ be a supernatural number and $A$ be a finite set. If $\pi\in  \mathbb{N}$, then $\Omega_A^{\kappa}\mathsf{Ab}_\pi=(\mathbb{Z}/\pi\mathbb{Z})^A$. Otherwise, we have $\Omega_A^{\kappa}\mathsf{Ab}_\pi=(\mathbb{Z})^A$
\end{proposition}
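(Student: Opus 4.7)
The plan is to split the argument into the two cases stated in the proposition and, in each, to match the $\kappa$-operations against the abelian group structure carried by $\overline{\Omega}_A\mathsf{Ab}_\pi$.

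For the case $\pi\in\mathbb{N}$, I would first appeal to the fact that $\mathsf{Ab}_\pi$ is then locally finite, so that $\overline{\Omega}_A\mathsf{Ab}_\pi$ coincides with the free $A$-generated abelian group of exponent dividing $\pi$, namely $(\mathbb{Z}/\pi\mathbb{Z})^A$. Since this group is finite and generated by $A$, every one of its elements is already reachable from $A$ using semigroup multiplication alone (each coordinate can be lifted to a non-negative integer less than $\pi$), so the $\kappa$-subalgebra generated by $A$ is the entire group, yielding the first equality.

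For the case $\pi\notin\mathbb{N}$, my first step is to compute the effect of the $\omega-1$ operation. In any $G\in\mathsf{Ab}_\pi$ of order $n$ one has $x^{1+n!}=x$ for every $x\in G$, hence $x^{\omega-1}=x^{-1}$; by continuity of the interpretation of implicit operations in the profinite group $\overline{\Omega}_A\mathsf{Ab}_\pi$, the $\omega-1$ operation coincides there with group inversion. Written additively, the $\kappa$-signature on $\overline{\Omega}_A\mathsf{Ab}_\pi$ therefore consists of addition and negation, so the $\kappa$-subalgebra generated by $A$ equals the abstract subgroup generated by $A$, i.e.\ the image of the natural $\mathbb{Z}$-linear map $\mathbb{Z}^A\to\overline{\Omega}_A\mathsf{Ab}_\pi$. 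To see that this map is injective, given a nonzero $(n_a)_{a\in A}\in\mathbb{Z}^A$, I would use the hypothesis $\pi\notin\mathbb{N}$ to pick $N\in\mathbb{N}$ dividing $\pi$ with $N>\max_a|n_a|$; the projection onto $(\mathbb{Z}/N\mathbb{Z})^A\in\mathsf{Ab}_\pi$ is then nonzero, and we may identify the subgroup generated by $A$ with $\mathbb{Z}^A$.

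I do not foresee a serious obstacle. The only mildly subtle point is transferring the finite-quotient identity $x^{\omega-1}=x^{-1}$ to the profinite limit, but this is immediate from the fact that implicit operations are, by definition, compatible with continuous homomorphisms; the rest reduces to an elementary identification of the subgroup structure carried by $A$ inside $\overline{\Omega}_A\mathsf{Ab}_\pi$.
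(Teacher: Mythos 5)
The paper does not supply a proof of this statement; it is quoted directly from Steinberg's paper (\cite{Ben:1999}, Proposition~3.4), so there is no in-text argument to compare against. Your proposal, however, is a correct and self-contained proof, and it proceeds in the natural way one would expect the cited result to be established.

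Two small remarks. In the case $\pi\in\mathbb{N}$, you must use \emph{semigroup} (not monoid) words over $A$, so when lifting a tuple $(k_a)_a\in(\mathbb{Z}/\pi\mathbb{Z})^A$ to a word you should take representatives in $\{1,\dots,\pi\}$ rather than ``non-negative integers less than $\pi$''; equivalently, obtain the zero element as $\pi\cdot e_a$ for a single letter $a$. This is a phrasing issue only, and the conclusion that the subsemigroup generated by $A$ already exhausts $(\mathbb{Z}/\pi\mathbb{Z})^A$ is correct. In the case $\pi\notin\mathbb{N}$, your reduction of $x^{\omega-1}$ to group inversion via finite quotients, and your injectivity argument for $\mathbb{Z}^A\to\overline{\Omega}_A\mathsf{Ab}_\pi$ by projecting to $(\mathbb{Z}/N\mathbb{Z})^A$ for a sufficiently large divisor $N$ of $\pi$ (which exists precisely because $\pi$ is infinite), are both sound, and together they identify $\Omega_A^\kappa\mathsf{Ab}_\pi$ with the abstract subgroup $\mathbb{Z}^A$ generated by $A$ inside $\widehat{\mathbb{Z}}_\pi^A$.
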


Let $x\in \overline{\Omega}_A\mathsf{S}$ and fix $n\in \mathbb{N}$. We denote by $x^{n^\omega}$ and $x^{n^{\omega-1}}$, the pseudowords  $\lim_{k\to \infty} x^{n^{k!}}$ and $\lim_{k\to \infty} x^{n^{k!-1}}$, respectively. Since the basic semigroup multiplication in $\overline{\Omega}_A\mathsf{S}$ is continuous, we have $$\left(x^{n^{\omega-1}}\right)^n=x^{n^{\omega}}.$$

Let $\pi$ be a supernatural number and consider the following set:
$$\sigma_\pi=\kappa\cup \left\{()^{p^{\omega-1}}\mid p\in P_\pi \right\}.$$
 In this paper we prove the following theorem.
\begin{theorem}\label{theorem5}
Let $\pi$ be a recursive supernatural number. Then  the pseudovariety $\mathsf{Ab}_\pi$ is completely $\sigma_\pi$-tame.
\end{theorem}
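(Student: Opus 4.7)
My plan is to establish the three conditions defining complete $\sigma_\pi$-tameness in turn. Recursive enumerability of $\mathsf{Ab}_\pi$ is immediate from Lemma~\ref{lemma1}, since $\pi$ being recursive yields decidable membership. For the word problem in $\Omega_A^{\sigma_\pi}\mathsf{Ab}_\pi$, note that $\sigma_\pi$ adjoins $p^{\omega-1}$ to $\kappa$ for each $p \in P_\pi$. A brief direct computation in $\mathbb{Z}/q^k\mathbb{Z}$ shows that $x^{p^{\omega-1}}$ acts as the multiplicative inverse of $p$ on components coprime to $p$ and as $0$ on the $p$-primary part. Using this, I would extract a normal form for elements of $\Omega_A^{\sigma_\pi}\mathsf{Ab}_\pi$ inside a concrete module of the shape $\mathbb{Z}[1/p : p \in P_\pi]^A$, appropriately quotiented to capture the $\pi_1$-torsion with $\pi_1 = \prod_{p \in P_\pi} p^{\pi(p)}$, and then recognise equality of two $\sigma_\pi$-terms as an arithmetic problem that is decidable because $\pi$ is recursive.

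For complete $\sigma_\pi$-reducibility, Proposition~\ref{proposition1} reduces the problem to parameter-free systems of $\sigma_\pi$-term equations, and Proposition~\ref{proposition2} further reduces it to verifying $\sigma_\pi$-fullness and weak $\sigma_\pi$-reducibility of $\mathsf{Ab}_\pi$. The organising tool is the decomposition $\pi = \pi_1 \pi_2$ with $\pi_2$ supported on the primes where $\pi(p) = \infty$: by Corollary~\ref{cor3} a pseudoidentity holds in $\mathsf{Ab}_\pi$ if and only if it holds in both $\mathsf{Ab}_{\pi_1}$ and $\mathsf{Ab}_{\pi_2}$. The pseudovariety $\mathsf{Ab}_{\pi_1}$ is locally finite, so fullness and reducibility there are straightforward once one passes to the signature $\sigma_\pi$. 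On $\mathsf{Ab}_{\pi_2}$, every $p \in P_\pi$ is coprime to every prime dividing $\pi_2$, so the operations $p^{\omega-1}$ act as multiplicative inverses of $p$ and the problem effectively reduces to one over $\mathsf{Ab}$, which is completely $\kappa$-tame. For $\sigma_\pi$-fullness one combines these per-factor analyses with the semilinearity of the Parikh image of any rational $L \subseteq A^*$ to describe $\psi_{\mathsf{Ab}_\pi}(Cl_{\sigma_\pi}(L))$ as a concrete semilinear subset of $\Omega_A^{\sigma_\pi}\mathsf{Ab}_\pi$ and conclude that it is closed.

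The principal obstacle is weak $\sigma_\pi$-reducibility. Given a parameter-free system of $\sigma_\pi$-term equations with rational constraints $L_x$ and some continuous $\delta_0:\overline{\Omega}_X\mathsf{S} \to \overline{\Omega}_A\mathsf{S}$ solving it modulo $\mathsf{Ab}_\pi$, the task is to produce a solution $\delta$ with the additional property that $\psi_{\mathsf{Ab}_\pi}(\delta(x)) \in \Omega_A^{\sigma_\pi}\mathsf{Ab}_\pi$ for every $x \in X$. Working additively, each equation is linear in the abelianisation, and I would solve prime by prime along $\pi = \pi_1 \pi_2$: for primes in $P_\pi$, the operations $p^{\omega-1}$ supply the inverses needed to realise a $\sigma_\pi$-solution in the bounded $p$-primary quotient, while for primes with $\pi(p) = \infty$, complete $\kappa$-reducibility of $\mathsf{Ab}$ furnishes an integer-valued solution directly. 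Compactness of $\overline{\Omega}_A\mathsf{S}$ together with Theorem~\ref{theorem4} then glues the local data into a single pseudoword $\delta(x)$. The delicate point, and the one I expect to be hardest, is preserving the rational constraint $\delta(x) \in Cl(L_x)$ across this prime-by-prime construction; the natural strategy is to refine the automaton recognising each $L_x$ so that it tracks the residue data relevant at each prime of $P_\pi$, thereby ensuring the per-prime adjustments remain inside the original constraint.
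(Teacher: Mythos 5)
Your overall skeleton (recursive enumerability from Lemma~\ref{lemma1}; decidable word problem; $\sigma$-fullness plus weak $\sigma$-reducibility via Propositions~\ref{proposition1} and~\ref{proposition2}) matches the paper's architecture, but the technical core of your argument has a genuine gap centered on your global decomposition $\pi=\pi_1\pi_2$ with $\pi_1=\prod_{p\in P_\pi}p^{\pi(p)}$. You assert that $\mathsf{Ab}_{\pi_1}$ is locally finite, but this is false whenever $P_\pi$ is infinite (e.g.\ $\pi(p)=1$ for all $p$), since then $\pi_1$ is an infinite supernatural number and $\mathsf{Ab}_{\pi_1}$ contains cyclic groups of unbounded order. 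The paper avoids this entirely by \emph{never} making a global split: at each step it first fixes the concrete data (the two pseudonumbers $u,v$ in the word-problem argument, or the coefficient matrix $B$ in Corollary~\ref{cor4}), identifies the \emph{finite} set of primes $\{p_1,\ldots,p_l\}\subseteq P_\pi$ that actually occur in the normal forms (via Lemmas~\ref{lemma50} and~\ref{lemma46}), and only then writes $\pi=p_1^{\beta_1}\cdots p_l^{\beta_l}\pi'$ with $\gcd(\pi',p_1\cdots p_l)=1$. This finite, data-dependent factor genuinely is locally finite, and $\pi'$ is coprime to the relevant clearing constant $c_u c_v$. Your version of the decomposition does not deliver this.

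Two further points. First, your reduction of the $\pi_2$-component to ``one over $\mathsf{Ab}$, which is completely $\kappa$-tame'' is not available: $\mathsf{Ab}_{\pi_2}$ is a proper, non-locally-finite pseudovariety of abelian groups unless $\pi_2(p)=\infty$ for every prime, and by~\cite{Delgado&Ben&Masuda:2007} such pseudovarieties are not $\kappa$-tame. The whole point of enlarging the signature to $\sigma_\pi$ is that the extra operations $p^{\omega-1}$ enlarge the free object (roughly to a localisation rather than $\mathbb{Z}$), and the reducibility argument must be carried out over that ring from scratch — the paper does this through the explicit linear-algebra result (Corollary~\ref{cor4}), which you would need a substitute for. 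Second, gluing per-prime local solutions ``into a single pseudoword'' via compactness and Theorem~\ref{theorem4} only produces a pseudoword, not a $\sigma_\pi$-term; the paper instead produces a $\sigma_\pi$-term explicitly, of the form $W=X_2+(p_1\cdots p_l)^{\omega}(X_1-X_2)$, using the finite prime list again. Likewise, preserving the rational constraint is not handled by refining an automaton but by passing to the semilinear (Parikh) description of the constraint and absorbing its parametrisation into the linear system $BY=C$ (Lemma~\ref{lemma43}, Theorem~\ref{theorem3}, and Theorem~\ref{theorem6}). In short, the missing idea is the systematic restriction to the finitely many primes determined by the given data, which is what makes both the ``finite part is locally finite'' claim and the ``output is a $\sigma_\pi$-term'' claim true.
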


\begin{corollary}
  Every  pseudovariety of finite abelian groups is decidable if and only if it is completely tame.
\end{corollary}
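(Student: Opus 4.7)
The proof plan is to verify separately the three conditions making up complete $\sigma_\pi$-tameness: recursive enumerability of $\mathsf{Ab}_\pi$, decidability of the word problem in $\Omega_A^{\sigma_\pi}\mathsf{Ab}_\pi$, and complete $\sigma_\pi$-reducibility. The first is immediate from Lemma~\ref{lemma1}, since a recursive $\pi$ yields decidable membership for $\mathsf{Ab}_\pi$, hence recursive enumerability. The second is the content of Section~3, as announced in the introduction, and I would take it as a standalone result there. The bulk of the proof thus lies in establishing complete $\sigma_\pi$-reducibility.

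For reducibility, I would first invoke Proposition~\ref{proposition1} to reduce to systems of $\sigma_\pi$-term equations without parameters, and then Proposition~\ref{proposition2} to split the task further: it suffices to show that $\mathsf{Ab}_\pi$ is both $\sigma_\pi$-full and weakly $\sigma_\pi$-reducible. For fullness, I would fix a rational $L \subseteq A^*$ and analyse $\psi_{\mathsf{Ab}_\pi}(Cl_{\sigma_\pi}(L))$ via Parikh's theorem: in the abelian quotient, $L$ corresponds to a semilinear subset of $\mathbb{Z}^A$ whose $\sigma_\pi$-closure lies in a finite union of translates of the $\sigma_\pi$-subalgebra generated by the period vectors, and this union should be shown to be topologically closed in $\Omega_A^{\sigma_\pi}\mathsf{Ab}_\pi$.

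For weak $\sigma_\pi$-reducibility, I start from a solution $\delta\colon \overline{\Omega}_X\mathsf{S} \to \overline{\Omega}_A\mathsf{S}$ modulo $\mathsf{Ab}_\pi$ respecting the rational constraints $\delta(x) \in Cl(L_x)$, and use Corollary~\ref{cor3} together with the lattice isomorphism of Lemma~\ref{lemma1} to decompose $\mathsf{Ab}_\pi$ into its prime-power pieces. On pieces corresponding to primes $p$ with $\pi(p) = \infty$, the image already lies in the $\kappa$-subalgebra, so $\kappa \subseteq \sigma_\pi$ suffices; on the finite pieces corresponding to $p \in P_\pi$, the new operation $x \mapsto x^{p^{\omega-1}}$ is precisely what is needed to realise the $p$-primary value of $\delta(x)$ as a $\sigma_\pi$-term. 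Reassembling these prime-by-prime rewrites into a single pseudoword $\delta'(x) \in \overline{\Omega}_A\mathsf{S}$ whose $\mathsf{Ab}_\pi$-image lies in $\Omega_A^{\sigma_\pi}\mathsf{Ab}_\pi$ gives the desired weak solution.

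The main obstacle will be the reassembly step: the prime-by-prime adjustments must be compatible with the single rational constraint $\delta'(x) \in Cl(L_x)$, and a priori nothing ensures that independent choices at different primes are simultaneously realisable inside the semilinear description of $Cl(L_x)$. I expect this compatibility to require a constructive Chinese-remainder-style argument inside the Parikh semilinear structure of $L_x$, exploiting the recursiveness of $\pi$ to pick appropriate residues at the finitely many primes in $P_\pi$ that interact non-trivially with the constraints. The fullness and the other reductions should, by contrast, follow rather directly from the structural results already collected.
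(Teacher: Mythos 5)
Your proposal addresses only one direction of the biconditional. You sketch a programme for showing that a decidable pseudovariety of finite abelian groups --- which, by Lemma~\ref{lemma1}, has the form $\mathsf{Ab}_\pi$ for a recursive supernatural number $\pi$ --- is completely $\sigma_\pi$-tame, but you never touch the converse implication: that a completely tame pseudovariety of abelian groups is decidable. That direction is not vacuous; it is a general fact from \cite{Jorge&Ben:2000} that tameness implies decidability, essentially because recursive enumerability together with $\sigma$-reducibility and a decidable word problem in $\Omega_A^\sigma\mathsf{V}$ furnish a membership algorithm. The paper's proof of this corollary is in fact just two sentences: the forward direction is a direct citation of the preceding Theorem~\ref{theorem5}, and the converse is delegated to that reference. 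What you are really attempting to (re)prove is Theorem~\ref{theorem5} itself.

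On the substance of your reducibility outline there is also a concrete technical gap. You claim that for primes $p$ with $\pi(p)=\infty$ ``the image already lies in the $\kappa$-subalgebra'', but the $p$-primary projection of a solution $\delta(x)$ lives in $\overline{\Omega}_A\mathsf{Ab}_{p^\infty}\cong\mathbb{Z}_p^A$, which is strictly larger than $\Omega_A^\kappa\mathsf{Ab}_{p^\infty}\cong\mathbb{Z}^A$; nothing guarantees that an arbitrary solution has integer $p$-adic components, so there genuinely is something to modify there as well. Moreover, a prime-by-prime decomposition of $\mathsf{Ab}_\pi$ involves infinitely many primes in general (both in $P_\pi$ and among those with infinite exponent), and a Chinese-remainder reassembly cannot glue infinitely many independent local adjustments. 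The paper avoids both issues by never decomposing along all primes: after clearing $\sigma_\pi$-denominators in the coefficient matrix via Lemma~\ref{lemma46}, only a \emph{finite} set of primes from $P_\pi$ is peeled off, and the cofactor $\pi'$ --- which absorbs every other prime, including all those with $\pi(p)=\infty$ --- is treated in one stroke because the relevant coefficients become invertible modulo $\mathsf{Ab}_{\pi'}$. You would need an analogous finiteness mechanism before your reassembly step could go through.
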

  \begin{proof}
 If a pseudovariety $\mathsf{V}$ of abelian groups is decidable, then by Lemma \ref{lemma1}, there is a recursive supernatural number $\pi$ such that $\mathsf{V}=\mathsf{Ab}_\pi$. Hence, by the preceding theorem, $\mathsf{V}$ is completely tame.

 The converse follows from \cite{Jorge&Ben:2000}.
  \end{proof}
  From now on, we fix a recursive supernatural number $\pi$.
For simplicity,  we denote $\sigma_\pi$  by $\sigma$.
\section{The pseudovariety \texorpdfstring{$\mathsf{Ab}_{\pi}$}{TEXT} is \texorpdfstring{$\sigma$}{TEX}-tame}
\subsection{The word problem is decidable in \texorpdfstring{$\Omega_A^{\sigma}\mathsf{Ab}_{\pi}$}{TEXT}}

We denote by $\widehat{\mathbb{Z} }_{\pi}$ and $\mathbb{Z}_\pi^{\sigma}$, the profinite group $\overline{\Omega}_{1} \mathsf{Ab}_\pi$ and the free $\sigma$-algebra ${\Omega}_1^{\sigma} \mathsf{Ab}_\pi$, respectively. Here, we take $\overline{\Omega}_{A} \mathsf{Ab}_\pi$ as an additive topological  group and therefore, the image of $\sigma$ under the projection $\psi_{\mathsf{Ab}_\pi}$ is the following implicit signature:
$$\left\{+ \ ,\  -\right\}\cup \left\{{p^{\omega-1}()}\mid p\in P_\pi\right\}.$$
 The elements of $\widehat{\mathbb{Z}}_\pi$ are called \emph{pseudonumbers}.

\begin{lemma}\label{theorem1}
For every implicit signature $\mathfrak{s}$ and every supernatural number~$\pi$,  the $\mathfrak{s}$-algebras $\Omega^\mathfrak{s}_{\{a_1,\ldots,a_r\}}\mathsf{Ab}_\pi$ and $\displaystyle\prod_{i=1}^r\Omega^\mathfrak{s}_{\{a_i\}}\mathsf{Ab}_\pi$ are isomorphic.
\end{lemma}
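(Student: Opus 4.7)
The plan is to leverage the classical decomposition of the free pro-$\mathsf{Ab}_\pi$ group on $r$ generators as a direct product of the singly-generated factors, and then verify that this isomorphism restricts appropriately to the $\mathfrak{s}$-subalgebras on both sides. Using the universal property of free pro-$\mathsf{Ab}_\pi$ semigroups, I would define a continuous semigroup homomorphism
\[
\Phi \colon \overline{\Omega}_A \mathsf{Ab}_\pi \to \prod_{i=1}^r \overline{\Omega}_{\{a_i\}} \mathsf{Ab}_\pi
\]
by prescribing $a_i \mapsto e_i$, where $e_i$ is the tuple having $a_i$ in position $i$ and the identity $0$ in every other position. Because each factor is free on the singleton $\{a_i\}$ and the product is abelian, an inverse is given by $(x_1,\ldots,x_r)\mapsto \sum_{i=1}^r \rho_i(x_i)$, where $\rho_i \colon \overline{\Omega}_{\{a_i\}}\mathsf{Ab}_\pi \to \overline{\Omega}_A\mathsf{Ab}_\pi$ is the continuous extension of $a_i\mapsto a_i$. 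Thus $\Phi$ is an isomorphism of topological abelian groups.

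Since $\Phi$ is a continuous homomorphism of pro-$\mathsf{Ab}_\pi$ semigroups, it commutes with every implicit operation and so preserves the $\mathfrak{s}$-structure. Its restriction therefore sends $\Omega^\mathfrak{s}_A\mathsf{Ab}_\pi$ bijectively onto the $\mathfrak{s}$-subalgebra of the product generated by $\Phi(A)=\{e_1,\ldots,e_r\}$, and the remaining task is to identify that subalgebra with $\prod_{i=1}^r \Omega^\mathfrak{s}_{\{a_i\}}\mathsf{Ab}_\pi$.

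This identification rests on a componentwise analysis together with the observation that every implicit operation, evaluated in a profinite abelian group at the all-zero tuple, returns $0$; this follows by approximating the operation by finite semigroup words and invoking continuity, each such word being a finite sum that is $0$ when all inputs are $0$. Granting it, applying any $\mathfrak{s}$-term to a sequence drawn from $\{e_1,\ldots,e_r\}$ produces, in coordinate $j$, the same term evaluated on entries among $\{a_j,0\}$ and hence lies in $\Omega^\mathfrak{s}_{\{a_j\}}\mathsf{Ab}_\pi$; this gives $\Phi(\Omega^\mathfrak{s}_A\mathsf{Ab}_\pi)\subseteq\prod_i\Omega^\mathfrak{s}_{\{a_i\}}\mathsf{Ab}_\pi$. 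Conversely, given $(x_1,\ldots,x_r)$ in the product with $x_i=t_i(a_i)$ for $\mathfrak{s}$-terms $t_i$, the element $t_i(e_i)$ has $x_i$ in coordinate $i$ and $0$ elsewhere, so $(x_1,\ldots,x_r)=\sum_i t_i(e_i)$ lies in the $\mathfrak{s}$-subalgebra generated by $\{e_1,\ldots,e_r\}$.

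The main obstacle is this last identification, specifically the lemma that $\mathfrak{s}$-implicit operations vanish at the identity of a profinite group together with the careful componentwise bookkeeping needed to match the $\mathfrak{s}$-subalgebra generated by $\{e_1,\ldots,e_r\}$ with the full direct product of the one-generator subalgebras.
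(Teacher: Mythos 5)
Your plan mirrors the argument the paper is pointing at (the cited Lemma~2.1 of Almeida–Delgado): identify $\overline{\Omega}_A\mathsf{Ab}_\pi$ with the product $\prod_i\overline{\Omega}_{\{a_i\}}\mathsf{Ab}_\pi$, note that every implicit operation commutes with this isomorphism and hence acts coordinatewise, and then match the $\mathfrak{s}$-subalgebra generated by the canonical generators with $\prod_i\Omega^{\mathfrak{s}}_{\{a_i\}}\mathsf{Ab}_\pi$. The vanishing of implicit operations at the identity, which you flag as a lemma, is indeed the key ingredient for the inclusion $\supseteq$ and is correct: the extension $\hat\varphi$ of the zero map has trivial image.

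There is, however, a genuine gap in the inclusion $\Phi(\Omega^{\mathfrak{s}}_A\mathsf{Ab}_\pi)\subseteq\prod_i\Omega^{\mathfrak{s}}_{\{a_i\}}\mathsf{Ab}_\pi$. Your sentence ``produces, in coordinate $j$, the same term evaluated on entries among $\{a_j,0\}$ and hence lies in $\Omega^{\mathfrak{s}}_{\{a_j\}}\mathsf{Ab}_\pi$'' silently assumes that the $\mathfrak{s}$-subalgebra of $\overline{\Omega}_{\{a_j\}}\mathsf{Ab}_\pi$ generated by $a_j$ contains the identity $0$: only then is that subalgebra closed under substituting $0$ for some arguments. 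This fails in general. Take $\mathfrak{s}=\{\cdot\}$, $\pi$ infinite, and $r=2$: then $\Omega^{\mathfrak{s}}_{\{a_2\}}\mathsf{Ab}_\pi=\{a_2,2a_2,3a_2,\ldots\}$ does not contain $0$, while $\Phi(a_1^{\,2})=(2a_1,0)$, so $\Phi$ does not even map into $\prod_i\Omega^{\mathfrak{s}}_{\{a_i\}}\mathsf{Ab}_\pi$; in fact $\Omega^{\{\cdot\}}_{\{a_1,a_2\}}\mathsf{Ab}_\pi\cong\mathbb{N}^2\setminus\{0\}$ is finitely generated whereas $(\mathbb{N}_{\ge1})^2$ is not, so there is no isomorphism at all. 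You should therefore make explicit the hypothesis you are using, namely that $0\in\Omega^{\mathfrak{s}}_{\{a_j\}}\mathsf{Ab}_\pi$; this holds as soon as $\mathfrak{s}\supseteq\kappa$, since in a profinite group $a_j\cdot a_j^{\omega-1}$ is the identity, and it is precisely the case $\mathfrak{s}=\sigma_\pi\supseteq\kappa$ that the paper needs. Once $0\in\Omega^{\mathfrak{s}}_{\{a_j\}}\mathsf{Ab}_\pi$ is in hand, both inclusions go through exactly as you describe.
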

\begin{proof}
The proof is essentially the same as the proof of \cite[Lemma 2.1]{Jorge&&Delgado:2005}.
\end{proof}

By the preceding lemma, we just need to prove that  the word problem is decidable in $\mathbb{Z}_\pi^{\sigma}$. From now on, we take $A=\{1\}$ and we prove that $\mathbb{Z}_\pi^{\sigma}$  has decidable word problem. In the following lemma, we find a simple form for the pseudonumbers  in $\mathbb{Z}_\pi^{\sigma}$.

\begin{lemma}\label{lemma50}
  Every pseudonumber  in $\mathbb{Z}_\pi^{\sigma}$ can be written in the following form:
\begin{equation}\label{eq12}
\begin{aligned}
 & n_1^{\omega-k_1}a_1+ n_2^{\omega-k_2}a_2+\cdots+n_m^{\omega-k_m}a_m+a_0,
\end{aligned}
\end{equation}
 with the  following conditions:
\begin{enumerate}
  \item for every $j$  $(1\leq j\leq m)$, $n_i$ and $k_j$ are  natural numbers;
  \item for every $i$ $(0\leq i\leq m)$, $a_{i}$ is in $\mathbb{Z}$;
  \item for every prime number $p$ dividing $n_j$ $(1\leq j\leq m)$, $p$ lies in $P_\pi$.
\end{enumerate}
\end{lemma}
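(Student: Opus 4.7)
The plan is to let $T \subseteq \widehat{\mathbb{Z}}_\pi$ denote the set of pseudonumbers admitting an expression of the form (\ref{eq12}) subject to conditions (1)--(3), and to verify that $T$ is a $\sigma$-subalgebra of $\widehat{\mathbb{Z}}_\pi$ that contains the generator $1$. Because $\mathbb{Z}_\pi^\sigma$ is by definition the smallest such $\sigma$-subalgebra, this will yield the inclusion $\mathbb{Z}_\pi^\sigma \subseteq T$, which is precisely the content of the lemma; the generator $1$ itself sits in $T$ with $m=0$ and $a_0 = 1$.

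Closure of $T$ under $+$ is immediate by concatenating the two sums of the form (\ref{eq12}), and closure under $-$ follows by negating every integer coefficient $a_i$. The only non-trivial case is closure under each unary operation $p^{\omega-1}$ with $p \in P_\pi$. Viewing $\widehat{\mathbb{Z}}_\pi$ as the compact topological ring obtained as the projective limit of the cyclic quotients $\mathbb{Z}/n\mathbb{Z}$ with $n$ dividing $\pi$, the operation $p^{\omega-1}$ is just multiplication by the pseudonumber $p^{\omega-1} = \lim_{j} p^{j!-1}$, and this distributes over finite sums by continuity.

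The key calculation I would perform is the identity
\[
p^{\omega-1}\cdot n^{\omega-k} \;=\; (pn)^{\omega-k}\cdot p^{\,k-1} \qquad (k\geq 1),
\]
obtained from the integer-level factorisation $p^{j!-1}n^{j!-k} = p^{\,k-1}(pn)^{j!-k}$ (valid once $j! \geq k$) and passage to the limit in $j$, using continuity of multiplication. Applied to an individual summand $n_j^{\omega-k_j}a_j$, this produces $(pn_j)^{\omega-k_j}(p^{k_j-1}a_j)$, which is again of the shape required in (\ref{eq12}); condition (3) is preserved because $p \in P_\pi$ together with the assumption that every prime dividing $n_j$ lies in $P_\pi$ forces every prime dividing $pn_j$ to lie in $P_\pi$. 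The constant term is handled by rewriting $p^{\omega-1}\cdot a_0$ as a single new summand with $n = p$, $k = 1$, coefficient $a_0$, and zero constant part. Collecting the $m+1$ resulting summands exhibits $p^{\omega-1}(x)$ in the required form, so $T$ is closed under $p^{\omega-1}$.

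The main obstacle I anticipate is arguing that the rearrangement in the displayed identity is legitimate at the level of pseudonumbers rather than merely of integer exponents; this reduces to continuity of multiplication in $\widehat{\mathbb{Z}}_\pi$ together with the definition of $n^{\omega-k}$ as $\lim_j n^{j!-k}$, a convention one should record explicitly (extending the notation $n^{\omega-1}$ from the preamble) before beginning the induction. The remaining bookkeeping---closure under addition, tracking the integer coefficients, and preservation of condition (3) on prime divisors---is routine.
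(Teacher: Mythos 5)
Your argument is correct, and it is closely related to the paper's but organized in the opposite direction. The paper's proof lists three algebraic identities for the operators $p^{\omega-k}$ (most notably $p_1^{\omega-k_1}\cdots p_m^{\omega-k_m}=\bigl(p_2^{k_1-k_2}\cdots p_m^{k_1-k_m}\bigr)(p_1\cdots p_m)^{\omega-k_1}$ when $k_1\geq\cdots\geq k_m$) and then normalizes an arbitrary $\sigma$-term over $\{1\}$ into the form (\ref{eq12}) by repeated application of those identities. You instead define $T$ to be the set of pseudonumbers admitting an expression of the form (\ref{eq12}), note that $1\in T$, and verify that $T$ is closed under the additive reading of the signature: $+$, $-$, and scalar multiplication by $p^{\omega-1}$ for each $p\in P_\pi$; minimality of $\mathbb{Z}_\pi^\sigma$ then gives $\mathbb{Z}_\pi^\sigma\subseteq T$. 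The mathematical content is the same — your key identity $p^{\omega-1}n^{\omega-k}=p^{k-1}(pn)^{\omega-k}$ is the two-factor instance of the paper's third bullet point, extended to allow a composite $n$, and both are verified componentwise in $\widehat{\mathbb{Z}}_\pi=\prod_q \mathbb{Z}/q^{\pi(q)}\mathbb{Z}$ or by the integer-level manipulation you record. What your packaging buys is a sharper logical structure: the paper's step from its three identities to the assertion that every $\sigma$-term reduces to normal form is left informal (it implicitly requires an induction on term complexity that is never spelled out), whereas your subalgebra-closure argument is self-contained and makes that induction automatic. The two approaches are of comparable length and both handle condition (3) in the same way, by observing that the prime divisors of $pn$ still lie in $P_\pi$.
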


\begin{proof}
Since every pseudonumber  in $\mathbb{Z}_\pi^{\sigma}$ is constructed from  $1$ using a finite number of times the operations  in $\sigma$, the pseudonumbers $(p^{\omega-1})^k$ are in $\mathbb{Z}_\pi^{\sigma}$ ($p\in P_\pi$ and $k\in \mathbb{N}$).

It is easy to prove the following  properties of the operations in $\sigma$:
\begin{itemize}
  \item $(p^{\omega-1})^k=p^{\omega-k}$ ($k\in\mathbb{N}$);
  \item $p^{\omega-k_1}p^{\omega-k_2}=p^{\omega-(k_1+k_2)}$ ($k_1,k_2\in\mathbb{N}$);
\item for every $k_1,k_2,\ldots,k_m\in \mathbb{N}$ with the property  $k_1\geq k_2\geq \ldots\geq k_m$ and for every $p_1,\ldots, p_m\in P_\pi$, we have
\begin{align*}
  p_1^{\omega-k_1}p_2^{\omega-k_2}\ldots p_m^{\omega-k_m}
    &=\left(p_2^{k_1-k_2}p_3^{k_1-k_3}\ldots p_{m}^{k_1-k_{m}}\right)\left(p_1p_2\ldots p_m\right)^{\omega-k_1}.
\end{align*}
 \end{itemize}

As $\mathbb{Z}_\pi^{\sigma}$ is a commutative algebra, by the above properties  every pseudonumber  in $\mathbb{Z}_\pi^{\sigma}$  can be written in the following form:

\begin{equation}\label{eq9}
\begin{aligned}
 &\left(n_1^{\omega-k_1}a_{1,k_1}+ n_1^{\omega-(k_1-1)}a_{1,(k_1-1)}+\cdots+n_1^{\omega-1}a_{1,1}\right)+\\
&\left( n_2^{\omega-k_2}a_{2,k_2}+ n_2^{\omega-(k_2-1)}a_{2,(k_2-1)}+\cdots+n_2^{\omega-1}a_{2,1}\right)+\cdots+\\
&\left(n_m^{\omega-k_m}a_{1,k_m}+n_m^{\omega-(k_m-1)}a_{1,(k_m-1)}+\cdots+n_m^{\omega-1}a_{m,1}\right)+a_0,
\end{aligned}
\end{equation}

with the following conditions:
\begin{enumerate}
  \item for every $j$ ($1\leq j\leq m$), $n_j$ and $k_j$ are  natural numbers;
  \item for  every $i$ and $j$, $a_{j,{k_j-t_j}}$ and $a_0$ are in $\mathbb{Z}$ ($1\leq j\leq m$ and $0\leq t_j\leq k_j-1$);
  \item for every prime number $p$ dividing  $n_i$ ($1\leq i\leq m$), $p$ lies in $P_\pi$.
\end{enumerate}
We can still  do more factorizations as follows:
\begin{equation}
\begin{aligned}
 &n_1^{\omega-k_1}\left(a_{1,k_1}+ n_1a_{1,(k_1-1)}+\cdots+n_1^{k_1-1}a_{1,1}\right)+\\
& n_2^{\omega-k_2}\left(a_{2,k_2}+ n_2a_{2,(k_2-1)}+\cdots+n_2^{k_2-1}a_{2,1}\right)+\cdots+\\
&n_m^{\omega-k_m}\left(a_{1,k_m}+n_ma_{1,(k_m-1)}+\cdots+n_m^{k_m-1}a_{m,1}\right)+a_0.
\end{aligned}
\end{equation}
Hence, every pseudonumber  in $\mathbb{Z}_\pi^{\sigma}$ can be written in  the following form:
\begin{equation*}
\begin{aligned}
 & n_1^{\omega-k_1}a_1+ n_2^{\omega-k_2}a_2+\cdots+n_m^{\omega-k_m}a_m+a_0,
\end{aligned}
\end{equation*}
 with the following conditions:
\begin{enumerate}
 \item for every $j$  $(1\leq j\leq m)$, $n_j$ and  $k_j$ are natural numbers;
  \item for every $i$   $(0\leq i\leq m)$, $a_{i}$ is in $\mathbb{Z}$;
  \item for every prime number $p$ dividing $n_j$ ($1\leq j\leq m$), $p$ lies in $P_\pi$.\qedhere
\end{enumerate}
\end{proof}

 Note that,  as  $\widehat{\mathbb{Z}}_\pi$  is the projective limit of the finite rings $\mathbb{Z}/n\mathbb{Z}$ ($n$ divides $\pi$), the profinite group $\widehat{\mathbb{Z}}_\pi$ inherit  a natural structure of a ring. Using the preceding lemma, it is easy to see that ${\mathbb{Z}}^\sigma_\pi$ is a subring of the profinite ring $\widehat{\mathbb{Z}}_\pi$.

\begin{lemma}\label{lemma45}
Let $\pi$ be a supernatural number and let $p$ be a prime number such that  $\gcd(p^\infty,\pi)=p^m$ with $m\in \mathbb{N}$. Let $p^m, p^{\omega+m}\in {\mathbb{Z}}_\pi^\sigma$. Then the pseudoidentity $p^{\omega+m}=p^m$  is valid in $\mathsf{Ab}_\pi$.
\end{lemma}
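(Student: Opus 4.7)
The plan is to split $\pi$ using Corollary \ref{cor3}. Write $\pi = p^m \cdot \pi'$, where $\pi'$ is the supernatural number with $\pi'(p) = 0$ and $\pi'(q) = \pi(q)$ for primes $q \neq p$. Since $p^m$ and $\pi'$ are coprime, the pseudoidentity $p^{\omega+m} = p^m$ is valid in $\mathsf{Ab}_\pi$ if and only if it is valid in each of $\mathsf{Ab}_{p^m}$ and $\mathsf{Ab}_{\pi'}$, so it suffices to verify these two cases separately.

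For $\mathsf{Ab}_{p^m}$, the free profinite object $\overline{\Omega}_1 \mathsf{Ab}_{p^m}$ is $\mathbb{Z}/p^m\mathbb{Z}$ (written additively). The pseudonumber $p^{\omega+m}$ is the image of the limit of $p^{k!+m}$ in $\overline{\Omega}_1 \mathsf{S}$; for every $k \geq 1$ the integer $p^{k!+m}$ is already divisible by $p^m$, so its projection is $0$. The same holds for $p^m$. Hence both sides project to $0$ in $\mathbb{Z}/p^m\mathbb{Z}$, giving the desired pseudoidentity in $\mathsf{Ab}_{p^m}$.

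For $\mathsf{Ab}_{\pi'}$, the group $\widehat{\mathbb{Z}}_{\pi'} = \overline{\Omega}_1 \mathsf{Ab}_{\pi'}$ is a projective limit of finite rings $\mathbb{Z}/n\mathbb{Z}$ with $\gcd(n,p) = 1$, and in each such quotient $p$ is a unit. Therefore $\widehat{\mathbb{Z}}_{\pi'}$ is a profinite ring in which $p$ is a unit, and the unit group is the compact profinite group $\varprojlim (\mathbb{Z}/n\mathbb{Z})^\times$. For any fixed $n \mid \pi'$, the order of $(\mathbb{Z}/n\mathbb{Z})^\times$ divides $k!$ for all sufficiently large $k$ by Euler's theorem, so $p^{k!} \to 1$ in every finite quotient, hence in $\widehat{\mathbb{Z}}_{\pi'}^\times$. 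Thus $p^\omega = 1$ and $p^{\omega+m} = p^\omega \cdot p^m = p^m$ in $\widehat{\mathbb{Z}}_{\pi'}$, establishing the pseudoidentity in $\mathsf{Ab}_{\pi'}$.

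Combining the two cases through Corollary \ref{cor3} yields the claim. I do not foresee any real obstacle: the argument is essentially a Chinese Remainder decomposition of $\widehat{\mathbb{Z}}_\pi$ into its $p$-component, where both sides vanish, and its $p$-coprime component, where $p^\omega$ is forced to be the multiplicative identity by compactness and Euler's theorem. The hypothesis $p^m, p^{\omega+m} \in \mathbb{Z}_\pi^\sigma$ plays no role beyond ensuring that the pseudoidentity is meaningful inside the $\sigma$-algebra.
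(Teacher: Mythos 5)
Your proof is correct and follows essentially the same route as the paper: decompose $\pi = p^m\pi'$, apply Corollary \ref{cor3}, note both sides vanish in $\mathbb{Z}/p^m\mathbb{Z}$, and use Euler's theorem to get $p^\omega = 1$ in $\widehat{\mathbb{Z}}_{\pi'}$. The only cosmetic difference is that the paper treats $m=0$ as a separate base case, whereas you absorb it uniformly into the coprime component.
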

\begin{proof}
If $m=0$, then by the Euler congruence theorem, the pseudoidentity $p^{\omega}=1$ is valid in $\mathsf{Ab}_\pi$.

Let $m>0$ and  $\pi=p^m\pi'$. Since $\gcd(p^m,\pi')=1$, the pseudoidentity  $p^m(p^\omega)=p^m$ is valid in  $\mathsf{Ab}_{\pi'}$. On the other hand, the pseudovariety $\mathsf{Ab}_{p^m}$ satisfies the pseudoidentities $p^m(p^\omega)=0=p^m$. Hence , by Corollary \ref{cor3}, the pseudoidentity $p^m(p^\omega)=p^m$ is valid in $\mathsf{Ab}_{\pi}$.
\end{proof}

 By the  following lemmas and Corollary \ref{cor3}, we can reduce the word problem in  $\mathbb{Z}_\pi^{\sigma}$ to the word problems in the free abelian  group $\mathbb{Z}$ and the finite abelian groups. We know that the word problem is decidable in the finitely generated free abelian groups and the finite abelian groups.

\begin{lemma}\label{lemma46}
  Let $\pi$ be a recursive supernatural number and $u\in \mathbb{Z}_\pi^{\sigma}$. There is a computable $c_u\in\mathbb{N}$ such that $c_uu\in \mathbb{Z}$ and for every prime number $p$ dividing $c_u$, $p$ belongs to $P_\pi$.
\end{lemma}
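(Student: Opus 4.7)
The plan is to convert $u$ to the normal form of Lemma \ref{lemma50} and then clear each factor $n_j^{\omega-k_j}$ by multiplying by an integer supplied by Lemma \ref{lemma45}. The recursiveness of $\pi$ will make the construction algorithmic.

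By (the proof of) Lemma \ref{lemma50}, $u$ can be effectively rewritten as
\begin{equation*}
u = n_1^{\omega-k_1}a_1+\cdots+n_m^{\omega-k_m}a_m+a_0,
\end{equation*}
where $a_0,\ldots,a_m\in\mathbb{Z}$, $n_j,k_j\in\mathbb{N}$ with $k_j\geq 1$, and every prime dividing any $n_j$ lies in $P_\pi$. Since $\pi$ is recursive, for each such prime $p$ we may compute $m_p:=\pi(p)\in\mathbb{N}$. Set
\begin{equation*}
c_j = n_j^{k_j}\prod_{p\mid n_j} p^{m_p}, \qquad c_u = \prod_{j=1}^{m} c_j.
\end{equation*}
Both are computable positive integers and all their prime factors lie in $P_\pi$.

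It remains to check $c_u u\in\mathbb{Z}$; since the $a_j$ are in $\mathbb{Z}$, it is enough to verify $c_j n_j^{\omega-k_j}\in\mathbb{Z}$ for each $j$. Using $n_j^{\omega-k_j}=(n_j^{\omega-1})^{k_j}$ one obtains
\begin{equation*}
n_j^{k_j}\, n_j^{\omega-k_j}=(n_j\cdot n_j^{\omega-1})^{k_j}=(n_j^\omega)^{k_j}=n_j^\omega,
\end{equation*}
the last equality being an immediate check in each primary component of the profinite ring $\widehat{\mathbb{Z}}_\pi$: at primes $q$ dividing $n_j$, $n_j^\omega$ is zero, and at all other primes it is $1$, so powers agree. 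The same componentwise reasoning gives $n_j^\omega = \prod_{p\mid n_j} p^\omega$, and applying Lemma \ref{lemma45} to each factor yields
\begin{equation*}
n_j^\omega \prod_{p\mid n_j} p^{m_p} = \prod_{p\mid n_j} p^{\omega+m_p} = \prod_{p\mid n_j} p^{m_p}\in\mathbb{Z}.
\end{equation*}
Therefore $c_j n_j^{\omega-k_j}=\prod_{p\mid n_j} p^{m_p}\in\mathbb{Z}$, and hence $c_u u\in\mathbb{Z}$.

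The only slightly subtle point is the effectiveness of the reduction to the normal form of Lemma \ref{lemma50}, but its proof consists of a finite sequence of explicit rewrites of $\sigma$-terms and hence yields an algorithm. Everything else follows from routine manipulations in the ring $\widehat{\mathbb{Z}}_\pi$ together with the single nontrivial identity supplied by Lemma \ref{lemma45}.
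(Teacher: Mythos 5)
Your proof is correct and follows essentially the same strategy as the paper: reduce $u$ to the normal form of Lemma \ref{lemma50}, then multiply by a suitable integer supported on $P_\pi$ to turn each term $n_j^{\omega-k_j}a_j$ into an integer via Lemma \ref{lemma45}. The only difference is cosmetic bookkeeping: the paper takes $c_u=n_1^{k_1+\beta}\cdots n_s^{k_s+\beta}$ with $\beta=\max_i\beta_i$ and checks $e_{i,j}\beta\geq\beta_j$ prime by prime, whereas you clear each factor with the tighter $c_j=n_j^{k_j}\prod_{p\mid n_j}p^{\pi(p)}$, factoring $n_j^\omega=\prod_{p\mid n_j}p^\omega$ componentwise; both yield a computable $c_u$ with prime support in $P_\pi$.
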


 \begin{proof}
      By Lemma \ref{lemma50}, $u$ can be written in the following form:
      \begin{equation}
      \begin{aligned}
      u&= n_1^{\omega-k_1}a_1+ n_2^{\omega-k_2}a_2+\cdots+n_s^{\omega-k_s}a_s+a_0,
      \end{aligned}
      \end{equation}
     with the following conditions:
     \begin{enumerate}
     \item for every $j$  $(1\leq j\leq m)$, $n_j$  and $k_j$ are  natural numbers;
  \item for every $i$   $(0\leq i\leq m)$, $a_{i}$ is in $\mathbb{Z}$;
       \item for every prime number $p$ dividing  $n_j$ ($1\leq j\leq s$), $p$ lies in $P_\pi$.
     \end{enumerate}
   Let $\{p_1,\ldots,p_l\}\subseteq P_\pi$ be the set of all prime  number divisors of the natural number $n_1\ldots n_s$ and
   let $\gcd(p_i^{\infty},\pi)=p_i^{\beta_i} $. Consider  $\beta=\max\{\beta_1,\ldots,\beta_l\}$ and
   $$c_u= n_1^{k_1+\beta}\ldots n_s^{k_s+\beta}.$$
   Since $\pi$ is a recursive supernatural number, $\beta$ is a computable natural number.

    We show that $c_uu$ belongs to $\mathbb{Z}$.
With the choice of the set $\{p_1,\ldots,p_l\}$, there are $e_{i,k}\in \mathbb{N}$ such that $n_i=p_1^{e_{i,1}}\ldots p_l^{e_{i,l}}$ ($1\leq i\leq s$). Since, if $e_{i,j}$ is a nonzero natural number,  then $e_{i,j}\beta\geq \beta_j$, by Lemma \ref{lemma45} the following pseudoidentities are valid in $\mathsf{Ab}_\pi$:
   \begin{equation}\label{eq14}
     p_j^{\omega+e_{i,j}\beta}=p_j^{e_{i,j}\beta} \ \ \ (1\leq j\leq l).
   \end{equation}
      So, the following pseudoidentities are  valid in $\mathsf{Ab}_\pi$:
   \begin{equation*}
     \begin{aligned}
      c_un_i^{\omega-k_i}&= n_1^{k_1+\beta}\ldots n_s^{k_s+\beta}n_i^{\omega-k_i}\\
        &=\left(n_1^{k_1+\beta}\ldots n_{i-1}^{k_{i-1}+\beta}n_{i+1}^{k_{i+1}+\beta}\ldots  n_s^{k_s+\beta}\right)n_i^{k_i+\beta}n_i^{\omega-k_i}\\
       &=\left(n_1^{k_1+\beta}\ldots n_{i-1}^{k_{i-1}+\beta}n_{i+1}^{k_{i+1}+\beta}\ldots  n_s^{k_s+\beta}\right)n_i^{\omega+\beta}\\
        &=\left(n_1^{k_1+\beta}\ldots n_{i-1}^{k_{i-1}+\beta}n_{i+1}^{k_{i+1}+\beta}\ldots  n_s^{k_s+\beta}\right)\left(p_1^{e_{i,1}}\ldots p_l^{e_{i,l}}\right)^{\omega+\beta}\\
        &=\left(n_1^{k_1+\beta}\ldots n_{i-1}^{k_{i-1}+\beta}n_{i+1}^{k_{i+1}+\beta}\ldots  n_s^{k_s+\beta}\right)\left(p_1^{\xi_1(\omega+\beta e_{i,1})}\ldots p_l^{\xi_l(\omega+\beta e_{i,l})}\right)\\
        & \stackrel{\eqref{eq14}}{=}\left(n_1^{k_1+\beta}\ldots n_{i-1}^{k_{i-1}+\beta}n_{i+1}^{k_{i+1}+\beta}\ldots  n_s^{k_s+\beta}\right)\left(p_1^{\xi_1(\beta e_{i,1})}\ldots p_l^{\xi_l(\beta e_{i,l})}\right)\in\mathbb{Z},
     \end{aligned}
    \end{equation*}
    where if we have $e_{i,j}>0$, then $\xi_j=1$, otherwise $\xi_j=0$. Hence, $c_uu$ lies in~$\mathbb{Z}$.
 \end{proof}

\begin{lemma}\label{lemma38}
    Let $\pi$ be a supernatural number and $n\in \mathbb{N}$ be such that $n$ and $\pi $ are relatively prime. Let $u,v\in \widehat{\mathbb{Z}}_\pi$. Then the pseudoidentity $nu=nv$ is valid  in $\mathsf{Ab}_\pi$ if and only if the pseudoidentity $u=v$ is valid in $\mathsf{Ab}_\pi$.
\end{lemma}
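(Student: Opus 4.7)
The plan is to exploit the ring structure on $\widehat{\mathbb{Z}}_\pi$ recorded just after Lemma~\ref{lemma50}. The pseudoidentity $nu=nv$ being valid in $\mathsf{Ab}_\pi$ means exactly that $nu=nv$ as elements of $\widehat{\mathbb{Z}}_\pi=\overline{\Omega}_{\{1\}}\mathsf{Ab}_\pi$, so since the ``only if'' direction is trivial, I only need to show that multiplication by $n$ is injective on $\widehat{\mathbb{Z}}_\pi$. My strategy is to prove the stronger statement that $n$ is a \emph{unit} in the profinite ring $\widehat{\mathbb{Z}}_\pi$; once this is in hand, multiplying both sides of $nu=nv$ by $n^{-1}$ finishes the argument.

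To construct such an inverse, I would write $\widehat{\mathbb{Z}}_\pi = \varprojlim_{m\mid\pi}\mathbb{Z}/m\mathbb{Z}$, where $m$ ranges over the natural numbers dividing the supernatural number $\pi$. For any such $m$ one has $\gcd(n,m)\mid\gcd(n,\pi)=1$, so $n$ is invertible modulo $m$; let $n_m^{-1}$ denote its inverse. The compatibility of the family $(n_m^{-1})_{m\mid\pi}$ with the projections $\mathbb{Z}/m'\mathbb{Z}\to\mathbb{Z}/m\mathbb{Z}$ (for $m\mid m'\mid\pi$) is immediate from the uniqueness of inverses, so this family defines an element $n^{-1}\in\widehat{\mathbb{Z}}_\pi$ with $n\cdot n^{-1}=1$.

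From there, $nu=nv$ in $\widehat{\mathbb{Z}}_\pi$ gives $u=n^{-1}(nu)=n^{-1}(nv)=v$, which is the desired pseudoidentity. I do not anticipate any real obstacle: the whole argument reduces to the elementary observation that an integer coprime to every natural number dividing $\pi$ becomes a unit in every finite quotient of $\widehat{\mathbb{Z}}_\pi$, hence in their inverse limit. The only care required is notational, namely to keep track of the additive ring convention fixed at the start of Section~3 versus the originally multiplicative language of pseudoidentities; but this convention has already been set up so the translation is transparent.
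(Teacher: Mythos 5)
Your proof is correct and, once unwound, lands in the same place as the paper's: the key point in both is that $n$ is a unit in the profinite ring $\widehat{\mathbb{Z}}_\pi$, so multiplication by $n$ is injective. The route is genuinely a bit different, though. The paper invokes its Lemma~\ref{lemma45} to get the pseudoidentity $n^\omega = 1$ valid in $\mathsf{Ab}_\pi$ (which, applied to the composite $n$ rather than a prime, rests on the Euler congruence in each finite quotient), and then cancels by writing $u = n^\omega u = n^{\omega-1}(nu) = n^{\omega-1}(nv) = n^\omega v = v$; the inverse of $n$ is exhibited as the implicit operation $n^{\omega-1}$, keeping the argument inside the pseudoidentity formalism used throughout the section. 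You instead construct $n^{-1}$ directly from the inverse-limit description $\widehat{\mathbb{Z}}_\pi = \varprojlim_{m \mid \pi} \mathbb{Z}/m\mathbb{Z}$, observing that $\gcd(n,m)=1$ for every $m \mid \pi$ and that the component-wise inverses are compatible. This is a touch more elementary and self-contained (no appeal to Lemma~\ref{lemma45}), and of course the element you build is the same one the paper calls $n^{\omega-1}$. Either version is fine for the purpose this lemma serves; the paper's phrasing just has the advantage of reusing the $\omega$-power bookkeeping that already drives most of the surrounding arguments.
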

  \begin{proof}
   If the pseudoidentity $u=v$ is valid in $\mathsf{Ab}_\pi$, then the pseudoidentity $nu=nv$ is valid in $\mathsf{Ab}_\pi$.

    Conversely, suppose that the pseudoidentity $nu=nv$ is valid  in $\mathsf{Ab}_\pi$. Since $\gcd(n,\pi)=1$, by Lemma \ref{lemma45}, the pseudoidentity $n^{\omega}=1$ is valid in $\mathsf{Ab}_\pi$. Hence, $\mathsf{Ab}_\pi$ satisfies the following pseudoidentities:
    $$u=n^\omega u=n^{\omega-1}(nu)=n^{\omega-1}nv=n^{\omega}v=v.$$
    So,  the pseudoidentity $u=v$ is valid in $\mathsf{Ab}_\pi$.
  \end{proof}

Now we prove the main result  of this section.
\begin{theorem}
Let $\pi$ be a recursive supernatural number. Then the word problem is decidable  in ${\mathbb{Z}}^{\sigma}_\pi$.
\end{theorem}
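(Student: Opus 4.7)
The plan is to reduce deciding $u = v$ in $\Omega_A^\sigma \mathsf{Ab}_\pi$ to two subproblems: a finite computation in a finite abelian group, and a divisibility test of an integer by a recursive supernatural number. By Lemma~\ref{theorem1}, $\Omega_A^\sigma \mathsf{Ab}_\pi$ is a direct product of one-generator components, so I may assume $A = \{1\}$; given inputs $u, v \in \mathbb{Z}_\pi^\sigma$, set $w := u - v$, and the task is to decide whether $\mathsf{Ab}_\pi \models w = 0$.

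First I would apply Lemma~\ref{lemma46} to compute effectively a natural number $c := c_w$ (whose prime divisors all lie in $P_\pi$) together with the integer $k := c w \in \mathbb{Z}$. Because $\pi$ is recursive and each $p \mid c$ lies in $P_\pi$ with $\pi(p)$ a computable finite number, the natural number $\pi_1 := \prod_{p \mid c} p^{\pi(p)}$ is effectively computable; writing $\pi = \pi_1 \pi_2$ gives $\gcd(\pi_1, \pi_2) = \gcd(c, \pi_2) = 1$. Corollary~\ref{cor3} then splits the decision:
\[
\mathsf{Ab}_\pi \models w = 0 \iff \mathsf{Ab}_{\pi_1} \models w = 0 \ \text{and} \ \mathsf{Ab}_{\pi_2} \models w = 0.
\]

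The first conjunct is a genuinely finite check: since $\pi_1 \in \mathbb{N}$, $\mathsf{Ab}_{\pi_1}$ is generated by $\mathbb{Z}/\pi_1\mathbb{Z}$, and we evaluate the $\sigma$-term $w$ directly in this finite group, interpreting each $x \mapsto x^{p^{\omega-1}}$ as zero on the $p$-primary component when $p \mid \pi_1$ and as multiplication by the modular inverse of $p$ otherwise. For the second conjunct, Lemma~\ref{lemma38} applied with $\gcd(c, \pi_2) = 1$ yields
\[
\mathsf{Ab}_{\pi_2} \models w = 0 \iff \mathsf{Ab}_{\pi_2} \models c w = 0 \iff \mathsf{Ab}_{\pi_2} \models k = 0,
\]
and since $k \in \mathbb{Z}$ is now an ordinary integer, the question is purely numerical: it holds trivially if $k = 0$, and otherwise is equivalent to $\pi_2 \mid k$ in the supernatural lattice.

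The step I expect to demand the most care is converting this last divisibility condition into a terminating decision procedure. At the finitely many primes $q \mid kc$, the inequality $\pi_2(q) \leq v_q(k)$ is decided by directly inspecting $\pi$ using recursivity; the subtle point is ruling out a prime $q \nmid kc$ with $\pi(q) > 0$, which at first glance appears to require examining $\pi$ at infinitely many primes. Turning this into an algorithm that terminates in both directions is the technical heart of the proof, and it is here that the full force of Lemma~\ref{lemma1} (recursivity of $\pi$ is equivalent to decidable membership in $\mathsf{Ab}_\pi$) must be combined with the syntactic control over which primes can possibly appear in $w$, given via the normal form of Lemma~\ref{lemma50}.
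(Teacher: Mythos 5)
Your overall strategy matches the paper's: use Lemma~\ref{lemma46} to compute $c$ with $cw\in\mathbb{Z}$, factor $\pi=\pi_1\pi_2$ along the primes dividing $c$, handle $\mathsf{Ab}_{\pi_1}$ by explicit evaluation in the finite group $\mathbb{Z}/\pi_1\mathbb{Z}$, and handle $\mathsf{Ab}_{\pi_2}$ via Lemma~\ref{lemma38} by reducing to a question about the integer $k=cw$. Where you diverge is at the very end: you recast $\mathsf{Ab}_{\pi_2}\models k=0$ as the supernatural divisibility $\pi_2\mid k$ and then flag deciding it as the ``technical heart'' still to be filled in.

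That final worry is not a genuine obstacle, and the paper does not contend with it. The key point, implicit in the paper's appeal to $\mathbb{Z}\subseteq\widehat{\mathbb{Z}}_{\pi'}$, is that whenever $\pi$ is infinite, $\pi_2$ is automatically infinite as well: the primes removed in forming $\pi_1$ all divide $c$, and Lemma~\ref{lemma46} guarantees each such prime lies in $P_\pi$, hence has finite exponent in $\pi$. So $\pi$ either has some prime of infinite exponent, which survives into $\pi_2$, or has infinitely many prime divisors, of which only finitely many are removed; in both cases $\pi_2$ is infinite, $\mathbb{Z}$ embeds into $\widehat{\mathbb{Z}}_{\pi_2}$, and the $\pi_2$-test is simply the integer equality $k=0$. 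No search over infinitely many primes is needed. In the remaining case, where $\pi$ is finite, the word problem is decided outright by evaluating the $\sigma$-term in the finite group $\mathbb{Z}/\pi\mathbb{Z}$. Since decidability only requires that an algorithm exist for each fixed $\pi$ (not that one be obtained uniformly from an arbitrary recursive presentation), these two cases may be handled by distinct algorithms and there is no need to algorithmically decide whether $\pi$ is finite. With this observation your proof closes and becomes, in substance, the paper's.
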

\begin{proof}
Let $u,v\in \mathbb{Z}_\pi^{\sigma}$. By Lemma \ref{lemma46}, there are computable $c_u,c_v\in\mathbb{N}$ such that $c_uu,c_vv\in \mathbb{Z}$ and for every prime number $p$ dividing $c_uc_v$, $p$ lies in $P_\pi$.
Let  $\{p_1,\dots,p_l\}\subseteq P_\pi$ be the set of all prime number divisors of the natural number  $c_uc_v$.
There are $\beta_i\in \mathbb{N}$ such that $\pi=p_1^{\beta_1}\ldots p_l^{\beta_l}\pi'$ and  $\gcd(\pi',p_1\ldots p_l)=1$.

By  Corollary \ref{cor3}, the pseudoidentity $u=v$ is valid in $\mathsf{Ab}_\pi$ if and only if  the pseudoidentity $u=v$ is valid in both $\mathsf{Ab}_{\pi'}$ and $\mathsf{Ab}_{(p_1^{\beta_1}\ldots p_l^{\beta_l})}$.

First  we  show that it is decidable whether the pseudoidentity $u=v$ is valid in $\mathsf{Ab}_{\pi'}$. We have $\gcd(c_uc_v,\pi')=1$. Hence, by Lemma \ref{lemma38}, the pseudoidentity $u=v$ is valid  in $\mathsf{Ab}_{\pi'}$ if and only if the pseudoidentity
 $(c_uc_v)u=(c_uc_v)v$ is valid in $\mathsf{Ab}_{\pi'}$. Since $c_vc_uu$ and $c_uc_vv$ are in $\mathbb{Z}$ and $\mathbb{Z}\subseteq \widehat{\mathbb{Z}}_{\pi'}$, the pseudoidentity $u=v$ is valid in $\mathsf{Ab}_{\pi'}$ if and only if
  $c_uc_vu-c_uc_vv=0$. So, it is decidable whether the pseudoidentity  $u=v$ is valid in $\mathsf{Ab}_{\pi'}$.

 On the other hand, the pseudoidentity $u=v$ holds in  $\mathsf{Ab}_{(p_1^{\beta_1}\ldots p_l^{\beta_l})}$, if and only if  for the
 natural projection
\begin{equation*}
\begin{aligned}
\varphi:\widehat{\mathbb{Z}}_{\pi}&\longrightarrow& \mathbb{Z}/(p_1^{\beta_1}\ldots p_l^{\beta_l})\mathbb{Z}\\
1&\mapsto& 1+(p_1^{\beta_1}\ldots p_l^{\beta_l})\mathbb{Z}
\end{aligned}
\end{equation*}
the equality $\varphi(u)=\varphi(v)$ holds.
We  show that $\varphi(u)$ and $\varphi(v)$ are computable, hence, it is decidable whether $\varphi(u)=\varphi(v)$.

To show that $\varphi(u)$  and $\varphi(v)$ are  computable, it is enough to show that for every prime number $p\in P_\pi$, $\varphi(p^{\omega-1})$ is computable.

We consider  the two following cases:
\begin{enumerate}
  \item The prime number $p$ does not belong to the set $\{p_1,\ldots,p_l\}$. Since $p\in P_\pi$ and  $\pi$ is a recursive supernatural number, there is a computable $\beta\in\mathbb{N}$ such that $\gcd(\pi,p^{\infty})=p^{\beta}$.   We have
      $$\gcd(p^{\beta+1}, p_1^{\beta_1}\ldots p_l^{\beta_l})=1.$$ So, by the Euclidean algorithm, one may compute  $s,t\in \mathbb{Z}$ such that $sp^{\beta+1}+ t p_1^{\beta_1}\ldots p_l^{\beta_l}=1$. Hence, the following pseudoidentities are valid in $\mathsf{Ab}_\pi$:
\begin{equation*}
\begin{aligned}
 p^{\omega-1}&=p^{\omega-1}\left(sp^{\beta+1}+ tp_1^{\beta_1}\ldots p_l^{\beta_l}\right)\\
&=sp^{\omega+\beta}+tp^{\omega-1} p_1^{\beta_1}\ldots p_l^{\beta_l}\\
&= sp^{\beta}+tp^{\omega-1} p_1^{\beta_1}\ldots p_l^{\beta_l},
\end{aligned}
\end{equation*}
the last pseudoidentity  follows from Lemma \ref{lemma45}. So, we have
$$\varphi(p^{\omega-1})=sp^{\beta}+\left(p_1^{\beta_1}\ldots p_l^{\beta_l}\right)\mathbb{Z}.$$

 \item The prime number $p$ belongs to $\{p_1,\ldots,p_l\}$. Suppose that $p=p_i$. Since $\gcd(p_i^{\beta_i+1}, p_1^{\beta_1}\ldots p_{i-1}^{\beta_{i-1}}p_{i+1}^{\beta_{i+1}}\ldots p_{l}^{\beta_{l}})=1$, one may compute  $s,t\in \mathbb{Z}$ such that
$$sp_i^{\beta_i+1}+tp_1^{\beta_1}\ldots  p_{i-1}^{\beta_{i-1}}p_{i+1}^{\beta_{i+1}}\ldots p_{l}^{\beta_{l}}=1.$$
Hence, the following pseudoidentities hold in $\mathsf{Ab}_\pi$:
\begin{equation*}
\begin{aligned}
 p_i^{\omega-1}&=p_i^{\omega-1}\left(sp_i^{\beta_i+1}+ tp_1^{\beta_1}\ldots  p_{i-1}^{\beta_{i-1}}p_{i+1}^{\beta_{i+1}}\ldots p_{l}^{\beta_{l}}\right)\\
 &=sp_i^{\omega+\beta_i}+tp_i^{\omega-1} \left(p_1^{\beta_1}\ldots  p_{i-1}^{\beta_{i-1}}p_{i+1}^{\beta_{i+1}}\ldots p_{l}^{\beta_{l}}\right)\\
&=sp_i^{\beta_i}+tp_i^{\omega-1}\left(p_1^{\beta_1}\ldots  p_{i-1}^{\beta_{i-1}}p_{i+1}^{\beta_{i+1}}\ldots p_{l}^{\beta_{l}}\right)\\
&=sp_i^{\beta_i}+tp_i^{\omega-\beta_i-1}\left(p_1^{\beta_1}\ldots  p_{i-1}^{\beta_{i-1}}p_i^{\beta_i}p_{i+1}^{\beta_{i+1}}\ldots p_{l}^{\beta_{l}}\right).
\end{aligned}
\end{equation*}
Therefore, we have $\varphi(p_i^{\omega-1})=sp_i^{\beta_i}+\left(p_1^{\beta_1}\ldots p_l^{\beta_l}\right)\mathbb{Z}$.
\end{enumerate}
Since $\varphi$ is a homomorphism,  $\varphi(u)$ and $\varphi(v)$ are computable, so, it is decidable whether the equality  $\varphi(u)=\varphi(v)$ holds in $\mathbb{Z}/(p_1^{\beta_1}\ldots p_l^{\beta_l})\mathbb{Z}$.
\end{proof}

\begin{corollary}\label{theorem2}
Let $\pi$ be a recursive supernatural number. The word problem in the $\sigma$-algebra  $\Omega_A^\sigma \mathsf{Ab}_\pi$ is decidable.
\end{corollary}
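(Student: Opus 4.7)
The plan is to reduce the multi-generator case to the one-generator case that was just handled by the main theorem of this subsection. By Lemma \ref{theorem1}, with $\mathfrak{s}=\sigma$ and $A=\{a_1,\ldots,a_r\}$, the $\sigma$-algebra $\Omega_A^\sigma\mathsf{Ab}_\pi$ is isomorphic to the product $\prod_{i=1}^r \Omega_{\{a_i\}}^\sigma \mathsf{Ab}_\pi$, and each factor is by definition a copy of $\mathbb{Z}_\pi^\sigma$.

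First I would fix the standard isomorphism $\Phi:\Omega_A^\sigma\mathsf{Ab}_\pi\to (\mathbb{Z}_\pi^\sigma)^r$: for each $i$, the projection onto the $i$-th coordinate is the continuous $\sigma$-homomorphism induced by $a_i\mapsto 1$ and $a_j\mapsto 0$ for $j\neq i$. Two elements $u,v\in\Omega_A^\sigma\mathsf{Ab}_\pi$ are equal if and only if $\Phi(u)=\Phi(v)$, i.e.\ if and only if their $r$ componentwise projections coincide pairwise in $\mathbb{Z}_\pi^\sigma$.

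Second, given representatives of $u$ and $v$ as $\sigma$-terms over $A$, the $i$-th coordinate of $\Phi(u)$ (resp.\ $\Phi(v)$) is obtained algorithmically by substituting $a_i\mapsto 1$ and $a_j\mapsto 0$ for $j\neq i$ in the given term, which produces a $\sigma$-term over the one-letter alphabet; this is effective because the substitution is purely syntactic on $\sigma$-terms. The preceding theorem then decides whether each pair of coordinate projections represents the same element of $\mathbb{Z}_\pi^\sigma$.

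Finally, running this test for each of the $r$ coordinates yields an algorithm that answers ``$u=v$ in $\Omega_A^\sigma\mathsf{Ab}_\pi$?''. The only substantive ingredient is the preceding theorem; there is no real obstacle, the statement being a straightforward corollary of the one-generator decidability together with the product decomposition of Lemma \ref{theorem1}.
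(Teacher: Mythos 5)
Your proof is correct and follows essentially the same route the paper takes: invoke Lemma~\ref{theorem1} to decompose $\Omega_A^\sigma\mathsf{Ab}_\pi$ as a product of copies of $\mathbb{Z}_\pi^\sigma$, observe that the component projections are effective on $\sigma$-terms, and apply the preceding one-letter decidability theorem coordinatewise. The paper states this reduction before the theorem (``By the preceding lemma, we just need to prove that the word problem is decidable in $\mathbb{Z}_\pi^\sigma$'') and leaves the corollary without further proof; you have merely spelled out the same argument.
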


\subsection{Systems of equations over \texorpdfstring{$\widehat{\mathbb{Z}}_{\pi}$}{TEXT}
  and \texorpdfstring{${\mathbb{Z}}^\sigma$} {TEXT}}

In this section, we show that if  a linear system of equations with coefficients in $\mathbb{Z}_\pi^\sigma$ has a solution in $\widehat{\mathbb{Z}}_\pi$, then it has a solution in $\mathbb{Z}_\pi^\sigma$. In the last section, we use this result to show that the pseudovariety $\mathsf{Ab}_\pi$ is  $\sigma$-reducible.

By  elementary Number Theory,   we know that  a congruence
$$ax\equiv b \pmod n$$
with $a,b,n\in \mathbb{Z}$ has a solution in $\mathbb{Z}$ if and only if  $\gcd(a,n)$ divides $b$. We generalize this result as follows:

\begin{lemma}\label{lemma39}
 The  equation $ax=b$ with $a,b\in \mathbb{Z}$ has a solution in $\widehat{\mathbb{Z}}_\pi$ if and only if  $d=\gcd(a,\pi)$ divides $b$ in $\mathbb{Z}$.
\end{lemma}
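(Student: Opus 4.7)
The plan is to reduce the question to divisibility in the finite quotients $\mathbb{Z}/n\mathbb{Z}$ where $n$ ranges over the natural numbers dividing $\pi$, exploiting the description $\widehat{\mathbb{Z}}_\pi = \varprojlim_{n \mid \pi} \mathbb{Z}/n\mathbb{Z}$ as a projective limit of finite rings.

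For the forward implication, I would first observe that $d = \gcd(a, \pi)$ is itself a natural number that divides $\pi$. Indeed, since $v_p(d) = \min(v_p(a), \pi(p))$ for each prime $p$, we have $v_p(d) \leq v_p(a)$, which is $0$ for almost every $p$ and finite for every $p$, so $d$ is a well-defined natural number dividing $a$; also $v_p(d) \leq \pi(p)$, so $d$ divides $\pi$ in the supernatural sense. Consequently, there is a canonical continuous surjective ring homomorphism $\widehat{\mathbb{Z}}_\pi \twoheadrightarrow \mathbb{Z}/d\mathbb{Z}$. If $x \in \widehat{\mathbb{Z}}_\pi$ satisfies $ax = b$, then projecting this identity to $\mathbb{Z}/d\mathbb{Z}$ and using $d \mid a$ yields $0 \equiv b \pmod{d}$, so $d \mid b$ in $\mathbb{Z}$.

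For the converse, suppose $d \mid b$. For each natural number $n$ dividing $\pi$, the inequality $v_p(\gcd(a,n)) = \min(v_p(a), v_p(n)) \leq \min(v_p(a), \pi(p)) = v_p(d)$ shows that $\gcd(a, n)$ divides $d$, hence divides $b$; so by the classical criterion for linear congruences the equation $ay \equiv b \pmod{n}$ has at least one solution in $\mathbb{Z}/n\mathbb{Z}$. Letting $S_n \subseteq \mathbb{Z}/n\mathbb{Z}$ denote the finite non-empty set of these solutions, the family $(S_n)_{n \mid \pi}$ is a projective subsystem inside $\widehat{\mathbb{Z}}_\pi = \varprojlim_{n \mid \pi} \mathbb{Z}/n\mathbb{Z}$; since each $S_n$ is finite and non-empty, the projective limit $\varprojlim S_n$ is non-empty by the standard compactness argument for inverse systems of finite sets, and any element of this limit is the desired solution in $\widehat{\mathbb{Z}}_\pi$.

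The main obstacle is mostly bookkeeping between natural-number and supernatural-number divisibility, namely ensuring $d$ is genuinely a natural number dividing both $a$ and $\pi$ so that the projection to $\mathbb{Z}/d\mathbb{Z}$ is available in one direction, and that $\gcd(a,n) \mid d$ holds uniformly in $n \mid \pi$ in the other; once this is in place, both implications follow from the elementary theory of linear congruences combined with the profinite limit description of $\widehat{\mathbb{Z}}_\pi$.
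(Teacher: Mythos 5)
Your proof is correct and follows essentially the same route as the paper. For the forward implication you both project along $\widehat{\mathbb{Z}}_\pi \twoheadrightarrow \mathbb{Z}/d\mathbb{Z}$ and use $d \mid a$; for the converse you both reduce to solvability of $ay \equiv b \pmod n$ for each finite $n \mid \pi$ (the paper via an explicit B\'ezout computation, you via the classical linear-congruence criterion, which is the same thing) and then invoke a compactness principle (the paper cites its Theorem~\ref{theorem4}, you cite nonemptiness of inverse limits of nonempty finite sets --- these are interchangeable here). The only thing worth tidying is the degenerate case $a=0$: your claim that $d=\gcd(a,\pi)$ is a natural number uses $v_p(a)<\infty$, which fails when $a=0$; the paper disposes of this case separately by noting $a=0$ forces $b=0$, and you should do the same before asserting $d\in\mathbb{N}$.
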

\begin{proof}

First suppose that the equation $ax=b$ with $a,b\in \mathbb{Z}$ has a solution in $\widehat{\mathbb{Z}}_\pi$.   If $a=0$, then we have $b=0$.

 Suppose that $a\neq0$.   Let $\varphi :\widehat{\mathbb{Z}}_\pi\rightarrow \mathbb{Z}/d\mathbb{Z}\in \mathsf{Ab}_\pi$ be defined by $\varphi(1)=1+d\mathbb{Z}$. Since $d$ divides  $a$, we have $\varphi(ax)=0$. Hence, the equalities   $b+d\mathbb{Z}=\varphi(b)=\varphi(ax)=0$ hold. Then  $d$ divides $b$ in $\mathbb{Z}$.

Conversely, suppose that there is $k\in\mathbb{Z}$ such that $b=dk$. Let $m\in \mathbb{N}$ be such that $m$ divides $\pi$. We have $\gcd(m,a)\mid d$. So, by the Euclidean algorithm, we may compute  $s,t\in\mathbb{Z}$ such that $sm+ta=d$. Hence, the following equalities hold
$$b=dk=ksm+kta.$$
Now, let $\varphi:\widehat{\mathbb{Z}}_\pi\rightarrow \mathbb{Z}/m\mathbb{Z}\in \mathsf{Ab}_\pi$ be a homomorphism. Then the equality $\varphi(b)=\varphi(kt)\varphi(a)$ holds. Therefore, the equation $ax=b$ has a solution in every finite abelian group in $\mathsf{Ab}_\pi$. By Theorem \ref{theorem4}, $ax=b$ has a solution in $\widehat{\mathbb{Z}}_\pi$.
\end{proof}

Now by using the preceding lemma, we show that if a single equation with a single variable and coefficients in $\mathbb{Z}^\sigma_\pi$ has a solution in $\widehat{\mathbb{Z}}_\pi$, then  it has a solution in $\mathbb{Z}_\pi^\sigma$. Then  we generalize this result to a linear systems of equations with coefficients in $\mathbb{Z}^\sigma_\pi$.
\begin{theorem}
   Let $u,v\in \mathbb{Z}_\pi^{\sigma}$. If the equation $ux=v$ has a solution in $\widehat{\mathbb{Z}}_{\pi}$, then it has a solution in $\mathbb{Z}_\pi^{\sigma}$.
  \end{theorem}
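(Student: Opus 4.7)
The plan is to solve $ux = v$ via a Chinese-remainder splitting of $\widehat{\mathbb{Z}}_\pi$, solving in each factor with the aid of operations in $\sigma$, and recombining using idempotents that lie in $\mathbb{Z}_\pi^\sigma$. First apply Lemma~\ref{lemma46} to obtain natural numbers $c_u, c_v$ with all prime factors in $P_\pi$, such that $a := c_u u$ and $b := c_v v$ are integers. Set $\pi_b := \prod_{p \in P_\pi,\, p \mid c_u c_v} p^{\pi(p)}$, a finite natural number, and $\pi_g := \pi/\pi_b$; then $\gcd(\pi_g, \pi_b)=1$ gives $\widehat{\mathbb{Z}}_\pi \cong \widehat{\mathbb{Z}}_{\pi_g} \times \mathbb{Z}/\pi_b\mathbb{Z}$. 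The identities in the proof of Lemma~\ref{lemma50} yield $\pi_b^{\omega-1} = \prod_{p \mid \pi_b}(p^{\omega-1})^{\pi(p)}$, so the orthogonal idempotents $\pi_b^\omega$ (projecting onto $\widehat{\mathbb{Z}}_{\pi_g}$) and $1 - \pi_b^\omega$ (projecting onto $\mathbb{Z}/\pi_b\mathbb{Z}$) both lie in $\mathbb{Z}_\pi^\sigma$.

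Now let $y \in \widehat{\mathbb{Z}}_\pi$ be a given solution of $ux = v$. In the finite factor $\mathbb{Z}/\pi_b\mathbb{Z}$, take $x_b \in \mathbb{Z}$ to be any integer representative of the projection of $y$; then $u x_b = v$ modulo $\pi_b$. In $\widehat{\mathbb{Z}}_{\pi_g}$, the numbers $c_u, c_v$ are units (all their prime factors are absorbed into $\pi_b$), so $ux = v$ is equivalent there to $(ac_v)x = bc_u$. By Lemma~\ref{lemma39} applied with $\pi_g$ in place of $\pi$, $g' := \gcd(ac_v, \pi_g)$ divides $bc_u$ in $\mathbb{Z}$. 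Write $N' := ac_v/g'$; a $p$-adic valuation check shows every prime factor of $N'$ lies in $P_\pi$ (if $p \notin P_\pi$ then $\pi_g(p) = \infty$, forcing $v_p(N')=0$), so $(N')^{\omega-1} := \prod_p (p^{\omega-1})^{v_p(N')}$ lies in $\mathbb{Z}_\pi^\sigma$, and hence so does $x_g := (bc_u/g')(N')^{\omega-1}$. The identity $(ac_v)x_g = (bc_u/g')\, g'\, (N')^\omega = bc_u\, (N')^\omega$ reduces the verification $u x_g = v$ in $\widehat{\mathbb{Z}}_{\pi_g}$ to $bc_u\, (N')^\omega = bc_u$: in components where $N'$ is a unit one has $(N')^\omega = 1$, while in each $p$-component with $p \mid N'$ the equality $v_p(g') = \pi_g(p)$ gives $p^{\pi_g(p)} \mid g' \mid bc_u$, making $bc_u \equiv 0$ exactly where $(N')^\omega$ vanishes. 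Hence $u x_g = v$ in $\widehat{\mathbb{Z}}_{\pi_g}$.

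Finally, $x := x_g\,\pi_b^\omega + x_b\,(1 - \pi_b^\omega) \in \mathbb{Z}_\pi^\sigma$ has components $x_g$ on $\widehat{\mathbb{Z}}_{\pi_g}$ and $x_b$ on $\mathbb{Z}/\pi_b\mathbb{Z}$, so $ux = v$ holds in each factor and therefore in $\widehat{\mathbb{Z}}_\pi$. The main technical obstacle is the $p$-adic valuation analysis, verifying both that every prime of $N'$ lies in $P_\pi$ (essential for $(N')^{\omega-1}$ to be expressible as a $\sigma$-term) and that the idempotent $(N')^\omega$ acts as the identity on $bc_u$ throughout $\widehat{\mathbb{Z}}_{\pi_g}$.
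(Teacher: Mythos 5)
Your proof is correct and follows essentially the same strategy as the paper's: clear denominators via Lemma~\ref{lemma46}, split $\pi$ into a finite part supported on the primes of $c_uc_v$ and a coprime part, solve by an integer congruence on the finite factor and by inverting with $\omega-1$ powers (via Lemma~\ref{lemma39}) on the coprime factor, and glue the two solutions with the idempotent $\pi_b^{\omega}$, exactly as the paper glues with $(p_1\cdots p_{l+b})^{\omega}$. The only differences are cosmetic: the paper also pulls the primes of $P_\pi$ dividing $c_uu$ into the finite modulus, whereas you absorb them into the $(N')^{\omega}$ valuation analysis, and you should dispose separately of the degenerate case $c_uu=0$ (where $N'$ is undefined), as the paper does for $u=0$.
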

\begin{proof}
  If $u=0$, then we have $v=0$. So, we assume that $u\neq 0$.

 Since $u,v\in \mathbb{Z}_\pi^{\sigma}$, by Lemma \ref{lemma46}, there are $c_u,c_v\in\mathbb{N}$ such that $c_uu,c_vv\in\mathbb{Z}$  and for every prime number $p$ dividing  $c_uc_v$, $p$ lies in $P_\pi$. Let $\{p_1,\ldots,p_l\}\subseteq P_\pi$ be the set of all prime number  divisors of the natural number $c_uc_v$. We may decompose the integer number $c_uu$ as follows:
\begin{equation}
c_uu=p_1^{\xi_1}\ldots p_l^{\xi_l}p_{l+1}^{\xi_{l+1}}\ldots p_{l+b}^{\xi_{l+b}}dk,
\end{equation}
with the following properties:
\begin{itemize}
\item for every $i$ ($1\leq i\leq l$), $\xi_i\in \mathbb{N}$  and for every $j$ ($l+1\leq j\leq l+b$), $\xi_j$ are  nonzero natural numbers;
   \item $p_i\in P_\pi$ are distinct prime numbers. Let $\beta_i\in \mathbb{N}$ be such that $$\gcd(p_i^{\infty},\pi)=p_i^{\beta_i}\ \ \  (1\leq i\leq l+b);$$
    \item for every prime  number $p$ dividing $d$, $p^{\infty}$ divides $\pi$. It follows that $d$ divides $\pi$;
    \item $\gcd(k,\pi)=1$.
\end{itemize}
Note that for every prime number $p$ dividing $(c_uu)/d$, $p$ lies in $P_\pi$.
The supernatural number $\pi$ can be  decomposed  into  $p_1^{\beta_1}\ldots p_{l+b}^{\beta_{l+b}}\pi'$ with the condition $\gcd(\pi', p_1\ldots p_{l+b}) =1$.  So, we have
 \begin{equation}\label{eq6}
 \begin{aligned}
    \gcd({c_uu},\pi')&= \gcd(dk,\pi')=d,
  \end{aligned}
 \end{equation}
 \begin{equation}\label{eq7}
 \begin{aligned}
  \gcd(\frac{c_uu}{d},\pi')=1.
  \end{aligned}
  \end{equation}

 Let $\varphi:\mathbb{Z}_\pi\rightarrow \mathbb{Z}/(p_1^{\beta_1}\ldots p_{l+b}^{\beta_{l+b}})\mathbb{Z}$ be the natural projection defined by $\varphi(1)=1+p_1^{\beta_1}\ldots p_{l+b}^{\beta_{l+b}}\mathbb{Z}$. Since the equation $ux=v$ has a solution in~$\widehat{\mathbb{Z}}_{\pi}$, the congruence  $$\varphi(u)x\equiv\varphi(v) \ \ \  \pmod{{p_1^{\beta_1}\ldots p_{l+b}^{\beta_{l+b}}}}$$ has a solution in $\mathbb{Z}$.
Let $x_1\in\mathbb{Z}$ be a solution of this congruence.

   Now we find  $x_2\in \mathbb{Z}_\pi^{\sigma}$ such that  $x_2$ is a solution of the equation $ux=v$ in $\widehat{\mathbb{Z}}_{\pi'}$.
Since the equation $ux=v$ has a solution in $\widehat{\mathbb{Z}}_{\pi}$, this equation has a  solution in~$\widehat{\mathbb{Z}}_{\pi'}$. Hence, $c_uc_vux=c_uc_vv$ has a solution in $\widehat{\mathbb{Z}}_{\pi'}$. So,
 by Lemma \ref{lemma39}, there is $k\in \mathbb{Z}$ such that $c_uc_vv=gcd(c_vc_uu,\pi')k$. As $\gcd(c_u,\pi')=\gcd(c_v,\pi')=1$, we have  $d=\gcd(c_vc_uu,\pi')$ and the pseudoidentity $(c_uc_v)^{\omega}=1$ is valid  in $\mathsf{Ab}_{\pi'}$. Hence, the following pseudoidentities hold in $\mathsf{Ab}_{\pi'}$:
\begin{equation}\label{eq10}
v=(c_uc_v)^\omega v=(c_uc_v)^{\omega-1}(c_uc_vv)=(c_uc_v)^{\omega-1}dk.
\end{equation}
Let
\begin{equation*}
    \begin{aligned}
    x_2&=c_uk\left(\frac{c_uu}{d}\right)^{\omega-1}(c_uc_v)^{\omega-1}\in \mathbb{Z}_\pi^{\sigma}.
    \end{aligned}
  \end{equation*}
By \eqref{eq7},  the pseudoidentity $(c_uu/d)^\omega=1$ is valid  in $\mathsf{Ab}_{\pi'}$. Hence, the following pseudoidentities are valid in $\mathsf{Ab}_{\pi'}$,
\begin{equation}\label{eq4}
\begin{aligned}
ux_2&=uc_u\left(\frac{c_uu}{d}\right)^{\omega-1}(c_uc_v)^{\omega-1}k\\
&=d\left(\frac{c_uu}{d}\right)^{\omega}(c_uc_v)^{\omega-1}k=d(c_uc_v)^{\omega-1}k\stackrel{\eqref{eq10}}{=}v.
\end{aligned}
\end{equation}
So, $x_2\in\mathbb{Z}_\pi^{\sigma}$ is a solution of the equation $ux=v$ in $\widehat{\mathbb{Z}}_{\pi'}$.

Let
\begin{equation*}
  w=x_1+\left(p_1\ldots p_{l+b}\right)^{\omega}\left(x_2-x_1\right)\in \mathbb{Z}_\pi^{\sigma}.
\end{equation*}
We claim that $w$ is a solution of the equation $ux=v$ in $\mathbb{Z}_\pi^{\sigma}$. By Corollary \ref{cor3}, it is enough to show that the pseudoidentity $uw=v$
is valid in both $\mathsf{Ab}_{\pi'}$ and $\mathsf{Ab}_{(p_1^{\beta_1}\ldots p_{l+b}^{\beta_{l+b}})}$.

   It remains to  establish  the claim.  Since $\gcd(\pi',p_1\ldots p_{l+b})=1$, the pseudoidentity $\left(p_1\ldots p_{l+b}\right)^{\omega}=1$ is valid in $\mathsf{Ab}_{\pi'}$. Hence, the following pseudoidentities are valid in $\mathsf{Ab}_{\pi'}$:
 \begin{equation*}
   \begin{aligned}
   uw&=u\left(x_1+\left(p_1\ldots p_{l+b}\right)^{\omega}(x_2-x_1)\right)\\
   &=u\left(x_1+(x_2-x_1)\right)=ux_2\stackrel{\eqref{eq4}}{=}v.
   \end{aligned}
 \end{equation*}
 So, the pseudoidentity $uw=v$ is valid in $\mathsf{Ab}_{\pi'}$.

It remains to prove that the pseudoidentity $uw=v$ is valid in   $\mathsf{Ab}_{(p_1^{\beta_1}\ldots p_{l+b}^{\beta_{l+b}})}$. Let
$$\beta=\max\{\beta_1,\ldots,\beta_{l+b}\}.$$
We have
\begin{equation*}
   \begin{aligned}
   \varphi(uw)&=\varphi(u)x_1+(p_1\ldots p_{l+b})^{\beta}\varphi((p_1\ldots p_{l+b})^{\omega-\beta}(ux_2-ux_1))\\
  &=\varphi(u)x_1+0=\varphi(u)x_1=\varphi(v).
   \end{aligned}
 \end{equation*}
 Hence, the pseudoidentity $uw=v$ holds in $\mathsf{Ab}_{(p_1^{\beta_1}\ldots p_{l+b}^{\beta_{l+b}})}$.
This proves the claim.
\end{proof}

\begin{corollary}\label{cor4}
  Let $B\in \mathrm{M}_{s\times t}(\mathbb{Z}_\pi^{\sigma})$ and $C\in (\mathbb{Z}_\pi^{\sigma})^s$. If the system of equations $BX=C$ has a solution in $\widehat{\mathbb{Z}}_{\pi}$, then it has a solution in $\mathbb{Z}_\pi^{\sigma}$.
\end{corollary}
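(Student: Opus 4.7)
The plan is to reduce Corollary~\ref{cor4} to the preceding scalar theorem via Smith normal form over $\mathbb{Z}$, together with the same idempotent-gluing trick used there. Using Lemma~\ref{lemma46} entrywise, I would first fix a natural number $N$ whose prime divisors all lie in $P_\pi$ such that $NB$ and $NC$ have integer entries. Let $\{p_1,\ldots,p_l\}$ be the primes dividing $N$, set $\beta_i=\pi(p_i)\in\mathbb{N}$, and factor $\pi=p_1^{\beta_1}\cdots p_l^{\beta_l}\pi'$ with $\gcd(p_1\cdots p_l,\pi')=1$. Applying Corollary~\ref{cor3} rowwise, it suffices to produce $W\in(\mathbb{Z}_\pi^\sigma)^t$ with $BW=C$ both in $\mathsf{Ab}_{p_1^{\beta_1}\cdots p_l^{\beta_l}}$ and in $\mathsf{Ab}_{\pi'}$. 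The first half is handled by taking $X_1\in\mathbb{Z}^t$ to be any integer lift of the projection of the hypothesized $\widehat{\mathbb{Z}}_\pi$-solution to $\mathbb{Z}/(p_1^{\beta_1}\cdots p_l^{\beta_l})\mathbb{Z}$, and then setting $W:=X_1+(p_1\cdots p_l)^\omega(X_2-X_1)\in(\mathbb{Z}_\pi^\sigma)^t$ for a to-be-constructed $X_2\in(\mathbb{Z}_\pi^\sigma)^t$ satisfying $BX_2=C$ in $\mathsf{Ab}_{\pi'}$.

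For $X_2$, observe that modulo $\mathsf{Ab}_{\pi'}$ the hypothesis $\gcd(N,\pi')=1$ together with Lemma~\ref{lemma38} makes $BX=C$ equivalent to the integer-coefficient system $(NB)X=NC$. Choose $U\in\mathrm{GL}_s(\mathbb{Z})$ and $V\in\mathrm{GL}_t(\mathbb{Z})$ bringing $NB$ into Smith normal form $U(NB)V=\mathrm{diag}(d_1,\ldots,d_r,0,\ldots,0)$ and substitute $Y=V^{-1}X$; the system decouples into scalar equations $d_iy_i=(UNC)_i$ for $i\leq r$, together with rows $i>r$ whose constraint $(UNC)_i=0$ in $\widehat{\mathbb{Z}}_{\pi'}$ already follows from the hypothesis, so one may set $y_i=0$ there. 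For each $i\leq r$, invoke the preceding theorem on the \emph{modified} scalar equation $d_iy_i=(UNC)_i(p_1\cdots p_l)^\omega$, whose right-hand side lies in $\mathbb{Z}_\pi^\sigma$ because $p_j^\omega=p_j\cdot p_j^{\omega-1}\in\mathbb{Z}_\pi^\sigma$. This modified equation does have a solution in $\widehat{\mathbb{Z}}_\pi$: by Corollary~\ref{cor3} it is enough to solve it in each factor of the splitting, and in $\mathsf{Ab}_{p_1^{\beta_1}\cdots p_l^{\beta_l}}$ the right-hand side vanishes (so $y_i=0$ works), while in $\mathsf{Ab}_{\pi'}$ it equals $(UNC)_i$ and the original system supplies a solution. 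The preceding theorem therefore yields $y_i\in\mathbb{Z}_\pi^\sigma$; projecting to $\widehat{\mathbb{Z}}_{\pi'}$ gives $d_iy_i=(UNC)_i$ there, so $X_2:=VY\in(\mathbb{Z}_\pi^\sigma)^t$ is the required vector.

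Finally, $W=X_1+(p_1\cdots p_l)^\omega(X_2-X_1)$ satisfies $BW=C$ in $\mathsf{Ab}_{\pi'}$ (where $(p_1\cdots p_l)^\omega=1$, so $W=X_2$) and in $\mathsf{Ab}_{p_1^{\beta_1}\cdots p_l^{\beta_l}}$ (where $(p_1\cdots p_l)^\omega=0$, so $W=X_1$), hence in $\mathsf{Ab}_\pi$ by Corollary~\ref{cor3}. I expect the main delicate point to be the invocation of the preceding theorem on a scalar equation whose right-hand side is not literally one of the original data: multiplying by $(p_1\cdots p_l)^\omega$ is precisely what simultaneously keeps the right-hand side inside $\mathbb{Z}_\pi^\sigma$ and guarantees the $\widehat{\mathbb{Z}}_\pi$-solvability that the preceding theorem requires.
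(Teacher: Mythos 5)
Your proof is correct and follows the same overall strategy as the paper: clear denominators to obtain an integer matrix, pass to Smith normal form, apply the scalar theorem to the resulting diagonal system, and glue the $\mathsf{Ab}_{\pi'}$-solution with an integer lift of the finite-quotient solution via the idempotent $(p_1\cdots p_l)^\omega$, finishing with Corollary~\ref{cor3}.

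The one genuine divergence is your decision to multiply the right-hand sides of the scalar equations by $(p_1\cdots p_l)^\omega$ before invoking the preceding theorem. That modification is in fact unnecessary. Since the original hypothesis gives a solution $X^*$ of $BX=C$ in $\widehat{\mathbb{Z}}_\pi$, the vector $Y^*=V^{-1}X^*$ solves the diagonal system $DY=U(NC)$ in $\widehat{\mathbb{Z}}_\pi$, so each scalar equation $d_i y_i=(UNC)_i$ already has a solution in all of $\widehat{\mathbb{Z}}_\pi$; moreover $(UNC)_i\in\mathbb{Z}\subseteq\mathbb{Z}_\pi^\sigma$, so the hypotheses of the scalar theorem are met as is. (For $i>r$ with $d_i=0$, the same push-forward of $X^*$ shows $(UNC)_i=0$ in $\widehat{\mathbb{Z}}_\pi$, so again the scalar theorem — or simply $y_i=0$ — applies.) This is exactly what the paper does: it applies the scalar theorem directly to $DY=L(cC)$, and then multiplies the resulting solution by $c^\omega$ (rather than modifying the right-hand side) to recover $BX_1=C$ modulo $\mathsf{Ab}_{\pi'}$. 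Your detour is harmless — once you have $y_i$ you only use its behaviour modulo $\mathsf{Ab}_{\pi'}$, where your extra factor is $1$ anyway — but the ``delicate point'' you flag at the end is not actually delicate, and removing the $(p_1\cdots p_l)^\omega$ factor would shorten the argument and bring it in line with the paper.
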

\begin{proof}
  By Lemma \ref{lemma46}, for every entry $u_{ij}$ of $B$, there is a computable $c_{ij}\in \mathbb{Z}$ such that $c_{ij}u_{ij}\in\mathbb{Z}$ and for every prime number $p$ dividing $c_{ij}$, $p$ lies in $P_\pi$. Let $\{p_1,\ldots, p_l\}\subseteq P_\pi$ be the set of all prime number divisors of the natural number   $c=\displaystyle\prod_{i=1}^s\displaystyle\prod_{j=1}^tc_{ij}$. There are $\beta_i\in \mathbb{N}$ such that $\pi=p_1^{\beta_1}\ldots p_l^{\beta_l}\pi'$ where $\gcd(\pi',p_1\ldots p_l)=1$.  Since $cB$ is in $\mathrm{M}_{s\times t}(\mathbb{Z})$, there are matrixes  $L\in \mathrm{GL}_{s}(\mathbb{Z})$ and $R\in \mathrm{GL}_{t}(\mathbb{Z})$ such that $D=L\left(cB\right)R$ is
   a diagonal matrix with entries in $\mathbb{Z}$. As, by assumption, the system $BX=C$ has a solution $V\in \mathrm{M}_{t\times 1}(\widehat{\mathbb{Z}}_{\pi})$, the system $DY=L(cC)$ has the solution  $R^{-1}V\in \mathrm{M}_{t\times 1}(\widehat{\mathbb{Z}}_{\pi})$, because we have
   $$DR^{-1}V=L(cB)RR^{-1}V=L(cB)V=Lc(BV)=LcC.$$
    Since $D$ is a diagonal matrix with entries in $\mathbb{Z}$ and $L(cC)\in (\mathbb{Z}_\pi^{\sigma})^{s}$, it follows from the preceding theorem, that there are $y_1,\ldots,y_t\in \mathbb{Z}_\pi^{\sigma}$ such that $Y= \begin{bmatrix}
    y_1\\
    \vdots \\
    y_t
    \end{bmatrix}$
is a solution of the system $DY=L(cC)$. The entries of the matrix  $X_1=c^{\omega}RY$ are in $\mathbb{Z}_\pi^{\sigma}$ and the following pseudoidentities are valid in $\mathsf{Ab}_{\pi'}$:
\begin{equation}\label{eq3}
   \begin{aligned}
   BX_1&=B(c^{\omega}RY)=c^{\omega-1}(cBR)Y=c^{\omega-1}L^{-1}(L(cB)RY)\\
   &=c^{\omega-1}L^{-1}DY=c^{\omega-1}L^{-1}(LcC)=c^{\omega}C=C,
  \end{aligned}
\end{equation}
where the last pseudoidentity follows from  the fact that $\gcd(c,\pi')=1$ and Lemma \ref{lemma45}.

Now let $\varphi:\widehat{\mathbb{Z}}_\pi\rightarrow \mathbb{Z}/(p_1^{\beta_1}\ldots p_l^{\beta_l})\mathbb{Z}$ be the natural projection. Since the system $BX=C$ has a solution in $\widehat{\mathbb{Z}}_\pi$, the system of congruences
\begin{equation}\label{eq2}
  \varphi(B)X\equiv\varphi(C) \  \pmod{p_1^{\beta_1}\ldots p_l^{\beta_l}}
\end{equation}
 has a solution in $\mathrm{M}_{t\times 1}(\mathbb{Z})$ where we extend the homomorphism $\varphi$ to the matrixes by applying it entrywise. Let $X_2\in \mathrm{M}_{t\times 1}(\mathbb{Z})$ be a solution of the system \eqref{eq2}.

 Let
  $$W=X_2+\left(p_1\ldots p_l\right)^{\omega} \left(X_1-X_2\right)\in  \mathrm{M}_{t\times 1}( \mathbb{Z}_\pi^{\sigma}).$$
We claim that $W$ is a solution of the system   $BX=C$ in $\widehat{\mathbb{Z}}_\pi$. By  Corollary \ref{cor3}, it is enough to show that the pseudoidentity $BW=C$ holds  in both $\mathsf{Ab}_{\pi'}$ and $\mathsf{Ab}_{(p_1^{\beta_1}\ldots p_l^{\beta_l})}$.
The proof of the claim is immediate  from the following facts:
\begin{itemize}
  \item By Lemma \ref{lemma45}, the pseudoidentity $\left(p_1\ldots p_l\right)^{\omega}=1$ is valid in $\mathsf{Ab}_{\pi'}$
  \item The pseudoidentity $\left(p_1\ldots p_l\right)^{\omega}=0$ is valid in $\mathsf{Ab}_{(p_1^{\beta_1}\ldots p_l^{\beta_l})}$.\qedhere
\end{itemize}
 \end{proof}

\subsection{The closure of  rational subsets in
 \texorpdfstring{$\Omega_A^{\sigma}\mathsf{Ab}_\pi$} {TEXT}}
  In order to show that the pseudovariety $\mathsf{Ab}_\pi$ is  $\sigma$-reducible,  we need to compute the closure of  $\psi_{\mathsf{Ab}_\pi}(L)$ in the free $\sigma$-algebra  $\Omega_A^{\sigma}\mathsf{Ab}_\pi$ where $L$ is a rational subset of the free monoid generated by $A$.
Since the image of a rational subset under a homomorphism is a rational subset,
       we must  compute the the closure of a rational subset of the free commutative monoid generated by $A$.

Let $M$ be a monoid. Finite unions of subsets of $M$ of the form
$$ab_1^*\ldots b_r^*\ \ \ (r\geq1, \ a,b_1,\ldots,b_r\in M),$$
are said to be \emph{semilinear}.
\begin{theorem}\cite[Proposition 2.2]{Delgado:1998}\label{lemma47}
 The semilinear subsets of a finitely generated commutative
monoid are precisely the rational subsets.
\end{theorem}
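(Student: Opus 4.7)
My plan is to prove the two inclusions separately: the forward direction (semilinear subsets are rational) is immediate from the definitions, while the reverse requires a structural induction on the construction of rational subsets. Each generator $a b_1^* \cdots b_r^*$ of a semilinear set is built from singletons by concatenation and Kleene star, so it is rational, and finite unions preserve rationality; this dispatches the forward inclusion.

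For the converse, I would use that the rational subsets of $M$ form the smallest family containing the singletons and closed under finite union, product, and Kleene star, so it suffices to verify these three closure properties for the semilinear subsets of $M$. Singletons are semilinear (write $\{a\} = a \cdot 1^*$, using the identity of $M$ as $b_1$), and closure under finite union is immediate from the very definition. Closure under product exploits commutativity, which permits the rearrangement
\[
(a b_1^* \cdots b_r^*)(c d_1^* \cdots d_s^*) = (ac)\, b_1^* \cdots b_r^* d_1^* \cdots d_s^*,
\]
followed by distributing over finite unions.

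The main obstacle is closure under Kleene star, and here I would invoke commutativity twice. First, in any commutative monoid one has $(L_1 \cup L_2)^* = L_1^* \cdot L_2^*$, because any finite product of elements of $L_1 \cup L_2$ may be reordered so that its $L_1$-factors precede its $L_2$-factors; iterating this identity reduces the problem to computing $L^*$ for a single generator $L = a b_1^* \cdots b_r^*$. Second, any product of $n$ elements of $L$ rearranges, again by commutativity, as $a^n y$ with $y \in b_1^* \cdots b_r^*$, so
\[
L^* = \{1\} \cup a \cdot a^* \cdot b_1^* \cdots b_r^*,
\]
which is patently semilinear. Together with the previous steps this closes the induction and yields the reverse inclusion, completing the proof.
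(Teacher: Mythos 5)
The paper does not prove this result; it cites \cite[Proposition 2.2]{Delgado:1998}, which in turn rests on the classical Eilenberg--Sch\"utzenberger theorem that rational subsets of a commutative monoid are exactly the semilinear ones. Your self-contained argument is correct and is essentially that classical proof. The forward inclusion is routine, and for the converse your structural induction on rational expressions hinges on two identities valid in any commutative monoid, both of which you verify correctly: $(L_1\cup L_2)^* = L_1^*L_2^*$ (reorder a word over $L_1\cup L_2$ so the $L_1$-factors come first), and $(a\,b_1^*\cdots b_r^*)^* = \{1\}\cup a\,a^*\,b_1^*\cdots b_r^*$ (a product of $n\ge 1$ factors collects into $a^n b_1^{j_1}\cdots b_r^{j_r}$, and conversely every such element is realized by padding one nontrivial factor with $n-1$ copies of $a$). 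Two cosmetic remarks: you should treat $\emptyset$ explicitly as the empty union of generators so the base case of the induction covers all finite sets; and the finite-generation hypothesis on $M$ is never used in your argument --- it is inherited from the statement but the result and your proof hold for arbitrary commutative monoids.
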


For a rational subset $L$ of the free commutative monoid generated by $A$, we denote by $Cl_{\sigma,\mathsf{Ab}_\pi}(L)$ and  $Cl_{\mathsf{Ab}_\pi}(L)$  the closure of $L$ in ${\Omega}^{\sigma}_{A}\mathsf{Ab}_\pi$ and  $\overline{\Omega}_{A}\mathsf{Ab}_\pi$, respectively.

\begin{lemma}\label{lemma43}
  Let $\pi$ be an infinite supernatural number and let $L$ be a finite union of sets of the form $a+b_1\mathbb{N}+\cdots+b_l\mathbb{N}$. Then the closure of $L$ in $\overline{\Omega}_A \mathsf{Ab_\pi}$  is a finite union of sets of the form
$$a+b_1\widehat{\mathbb{Z}}_\pi+\cdots+b_l\widehat{\mathbb{Z}}_\pi.$$
\end{lemma}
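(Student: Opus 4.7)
The plan is to first reduce, via the standard fact that closure commutes with finite unions, to the case of a single set $L_0 = a + b_1\mathbb{N} + \cdots + b_l\mathbb{N}$, and show that its closure in $\overline{\Omega}_A\mathsf{Ab}_\pi$ equals $a + b_1\widehat{\mathbb{Z}}_\pi + \cdots + b_l\widehat{\mathbb{Z}}_\pi$. Here I treat $\overline{\Omega}_A\mathsf{Ab}_\pi$ as what it is, namely a compact abelian topological group which, since every element has exponent dividing $\pi$, carries a natural continuous $\widehat{\mathbb{Z}}_\pi$-module structure; this is what gives meaning to $b_i\widehat{\mathbb{Z}}_\pi$.

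The computation of $\overline{L_0}$ rests on two general facts valid in any compact abelian topological group $G$. First, closure distributes over finite sums: for subsets $S_0,\ldots,S_l \subseteq G$, the set $\overline{S_0} + \cdots + \overline{S_l}$ is the image of the compact set $\overline{S_0} \times \cdots \times \overline{S_l}$ under continuous addition, hence is closed and contains $S_0 + \cdots + S_l$, giving $\overline{S_0 + \cdots + S_l} \subseteq \overline{S_0} + \cdots + \overline{S_l}$; the reverse inclusion is a direct net argument using continuity of addition. Second, for each $b_i$ one has $\overline{b_i \mathbb{N}} = b_i\widehat{\mathbb{Z}}_\pi$: the right-hand side is the image of the continuous map $\widehat{\mathbb{Z}}_\pi \to \overline{\Omega}_A\mathsf{Ab}_\pi$, $z \mapsto z b_i$, hence compact and closed, and it clearly contains $b_i\mathbb{N}$; the reverse inclusion reduces, by continuity of scalar multiplication by $b_i$, to the density of $\mathbb{N}$ in $\widehat{\mathbb{Z}}_\pi$.

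The density of $\mathbb{N}$ in $\widehat{\mathbb{Z}}_\pi$ is the sole place where the assumption that $\pi$ is infinite really matters: given $z \in \widehat{\mathbb{Z}}_\pi$ and a basic open neighborhood $z + n\widehat{\mathbb{Z}}_\pi$ with $n \mid \pi$, one picks a positive integer representative of the residue class $z \bmod n$ (shifting by a positive multiple of $n \in \mathbb{N}$ if the canonical representative is zero). Combining the two facts with $S_0 = \{a\}$ and $S_i = b_i\mathbb{N}$ gives $\overline{L_0}$ in the required form, and taking a finite union concludes. I do not expect a serious obstacle: the argument is essentially a routine manipulation of closures in a compact topological group, with the density of $\mathbb{N}$ in $\widehat{\mathbb{Z}}_\pi$ as the only nontrivial ingredient.
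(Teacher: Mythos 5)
Your proposal is correct and follows essentially the same route as the paper: reduce via finite unions and closure-of-sums to computing $Cl(b_i\mathbb{N})$, then identify this closure with $b_i\widehat{\mathbb{Z}}_\pi$ using the continuous $\widehat{\mathbb{Z}}_\pi$-module structure and compactness. The only cosmetic difference is that the paper proves $Cl(b_i\mathbb{N})\subseteq b_i\widehat{\mathbb{Z}}_\pi$ by extracting a convergent subsequence of scalars and proves the reverse inclusion by noting $Cl(b_i\mathbb{N})$ is a closed subgroup containing $b_i$, while you use the closedness of the compact image $b_i\widehat{\mathbb{Z}}_\pi$ for the first inclusion and density of $\mathbb{N}$ in $\widehat{\mathbb{Z}}_\pi$ for the second.
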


\begin{proof}
 We know that the  closure  of the finite  union is the union of closures of its terms. As the addition is continuous and  $\overline{\Omega}_A \mathsf{Ab_\pi}$ is a compact space, the closure of a sum is the sum of the closures of its summand.  Hence,  $Cl_{\mathsf{Ab}_\pi}(L)$ is a finite union of sets of the form:
$$a+Cl_{\mathsf{Ab}_\pi}(b_1\mathbb{N})+\cdots+Cl_{\mathsf{Ab}_\pi}(b_l\mathbb{N}).$$
So, it is enough to show that $Cl_{\mathsf{Ab}_\pi}(b_i\mathbb{N})=b_i\widehat{\mathbb{Z}}_\pi$.
 Let $x\in Cl_{\mathsf{Ab}_\pi}(b_i\mathbb{N})$ and consider a sequence $\{x_n\}_n$  in $b_i\mathbb{N}$ converging  to $x$. The elements of the sequence are of the form $b_iy_n$ with $y_n\in \mathbb{N}$. Since ${\mathbb{N}}\subseteq\widehat{\mathbb{Z}}_\pi$ and $\widehat{\mathbb{Z}}_\pi$ is a compact space, the sequence $\{y_n\}_n$ has a subsequence $\{y_{n_k}\}_k$  converging to a point $y\in \widehat{\mathbb{Z}}_\pi$. Hence, we must have $x=b_iy$. So, $x$ is in $b_i\widehat{\mathbb{Z}}_\pi$.

Conversely, as the closure of a subsemigroup is again a subsemigroup, for every $y\in Cl_{\mathsf{Ab}_\pi}(b_i\mathbb{N})$  the sequence $\{(n!-1)y\}_n$ is in $Cl_{\mathsf{Ab}_\pi}(b_i\mathbb{N})$ converging to $(\omega-1)y$. Hence, $Cl_{\mathsf{Ab}_\pi}(b_i\mathbb{N})$ is a closed topological subgroup of $\overline{\Omega}_A\mathsf{Ab}_\pi$ containing $b_i$. Therefore, the closed topological subgroup $b_i\widehat{\mathbb{Z}}_\pi$  generated by $b_i$ is contained in $Cl_{\mathsf{Ab}_\pi}(b_i\mathbb{N})$.
\end{proof}

\begin{theorem}\label{theorem3}
  Let  $L$ be a finite union of sets of the form $a+b_1\mathbb{N}+\cdots+b_l\mathbb{N}$. Then the closure of $L$ in ${\Omega}_A^{\sigma} \mathsf{Ab_\pi}$  is a finite union of sets of the form:
$$a+b_1{\mathbb{Z}}_\pi^{\sigma}+\cdots+b_l{\mathbb{Z}}_\pi^{\sigma}.$$
\end{theorem}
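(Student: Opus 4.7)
The plan is to reduce to the single-summand case and combine the preceding lemma with Corollary~\ref{cor4}. Since topological closure commutes with finite unions, I would write $L = \bigcup_j L_j$ with $L_j = a_j + b_{j,1}\mathbb{N} + \cdots + b_{j,l_j}\mathbb{N}$ and argue separately for each $j$ that
\[
Cl_{\sigma,\mathsf{Ab}_\pi}(L_j) = a_j + b_{j,1}\mathbb{Z}_\pi^{\sigma} + \cdots + b_{j,l_j}\mathbb{Z}_\pi^{\sigma}.
\]
Fixing one such index and dropping it from the notation, we are left with $L = a + b_1\mathbb{N} + \cdots + b_l\mathbb{N}$, and must show that its closure in $\Omega^{\sigma}_A\mathsf{Ab}_\pi$ equals $a + b_1\mathbb{Z}_\pi^{\sigma} + \cdots + b_l\mathbb{Z}_\pi^{\sigma}$.

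For the inclusion $a + b_1\mathbb{Z}_\pi^{\sigma} + \cdots + b_l\mathbb{Z}_\pi^{\sigma} \subseteq Cl_{\sigma,\mathsf{Ab}_\pi}(L)$, I would use that $\mathbb{N}$ is dense in $\mathbb{Z}_\pi^{\sigma}$: the $\sigma$-algebra carries the subspace topology from $\widehat{\mathbb{Z}}_\pi$, and $\mathbb{N}$ is dense in $\widehat{\mathbb{Z}}_\pi$ when $\pi$ is infinite (the case of finite $\pi$ is immediate since then $\mathbb{Z}_\pi^{\sigma} = \mathbb{Z}/\pi\mathbb{Z}$). Continuity of addition and of multiplication by the integer vectors $b_i$ then allows approximation of any $a + b_1 z_1 + \cdots + b_l z_l$ with $z_i \in \mathbb{Z}_\pi^{\sigma}$ by elements of $L$, entirely inside $\Omega^{\sigma}_A\mathsf{Ab}_\pi$.

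For the converse inclusion, take $x \in Cl_{\sigma,\mathsf{Ab}_\pi}(L)$. Since the $\sigma$-closure is the topological closure within $\Omega^{\sigma}_A\mathsf{Ab}_\pi$ endowed with the subspace topology, $x$ also lies in $Cl_{\mathsf{Ab}_\pi}(L)$, so Lemma~\ref{lemma43} yields $y_1,\ldots,y_l \in \widehat{\mathbb{Z}}_\pi$ with $x - a = b_1 y_1 + \cdots + b_l y_l$. I would now read this equation coordinatewise using the isomorphism $\Omega^{\sigma}_A\mathsf{Ab}_\pi \cong (\mathbb{Z}_\pi^{\sigma})^A$ provided by Lemma~\ref{theorem1}: it becomes a linear system $BY = x - a$ where $B \in \mathrm{M}_{|A|\times l}(\mathbb{Z})$ has entries the coordinates of the $b_i$, and the right-hand side $x - a \in (\mathbb{Z}_\pi^{\sigma})^{|A|}$. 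By construction this system has a solution in $\widehat{\mathbb{Z}}_\pi$, so Corollary~\ref{cor4} supplies a solution $(y_1', \ldots, y_l') \in (\mathbb{Z}_\pi^{\sigma})^l$; then $x = a + b_1 y_1' + \cdots + b_l y_l'$ belongs to the claimed set.

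The only nontrivial conceptual point is the translation from the one-dimensional expression $x = a + \sum b_i y_i$ in the multi-letter $\sigma$-algebra into an $|A|$-equation, $l$-unknown linear system over $\mathbb{Z}_\pi^{\sigma}$, which is the shape required by Corollary~\ref{cor4}; once this repackaging is in place, the theorem follows directly from the two preceding results.
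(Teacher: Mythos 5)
Your proof is correct and follows essentially the same route as the paper's: use Lemma~\ref{lemma43} to write $x-a=\sum b_i y_i$ with $y_i\in\widehat{\mathbb{Z}}_\pi$, repackage this as a linear system $BY=x-a$ over $\mathbb{Z}_\pi^{\sigma}$ via the coordinatewise isomorphism $\Omega_A^{\sigma}\mathsf{Ab}_\pi\cong(\mathbb{Z}_\pi^{\sigma})^A$, and invoke Corollary~\ref{cor4}. The only cosmetic difference is that you spell out the reverse inclusion via density of $\mathbb{N}$ in $\mathbb{Z}_\pi^{\sigma}$, whereas the paper dismisses it as trivial (it also follows at once from $Cl_{\sigma,\mathsf{Ab}_\pi}(L)=Cl_{\mathsf{Ab}_\pi}(L)\cap\Omega_A^{\sigma}\mathsf{Ab}_\pi$ together with Lemma~\ref{lemma43}).
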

\begin{proof}
We have
$$Cl_{\sigma,\mathsf{Ab}_\pi}(L)=Cl_{\mathsf{Ab}_\pi}(L)\cap \Omega_A^\sigma\mathsf{Ab_\pi}.$$
Let $v\in Cl_{\sigma,\mathsf{Ab}_\pi}(L)$. By the preceding lemma, $Cl_{\mathsf{Ab}_\pi}(L)$ is a finite union of sets of the form:
$$a+b_1\widehat{\mathbb{Z}}_\pi+\cdots+b_l\widehat{\mathbb{Z}}_\pi.$$
Hence,  there are $y_1,\ldots,y_l\in \widehat{\mathbb{Z}}_\pi$, such that
$$v=a+b_1y_1+\cdots+b_ly_l.$$
Let $BY=C$ be a system of equations  where $B$ is  the matrix whose columns are the vectors $b_1,\ldots, b_l$ and $C$ is the vector $v-a$ seen as a column matrix.
Since the system $BY=C$ has a solution in $\widehat{\mathbb{Z}}_\pi$, by Corollary \ref{cor4}, this system has a solution in $\mathbb{Z}_\pi^{\sigma}$. Hence, there are $y'_1,\ldots, y'_l\in \mathbb{Z}_\pi^{\sigma}$ such that
$$v=a+b_1y'_1+\cdots+b_ly'_l.$$
Therefore, $v$ lies in $a+b_1\mathbb{Z}_\pi^{\sigma}+\cdots+b_l\mathbb{Z}_\pi^{\sigma}$.

The reverse inclusion is trivial.
\end{proof}

\begin{corollary}\label{cor5}
Let $L$ and $K$ be semilinear subsets of the $A$-generated free commutative monoid. Then the following hold:
\begin{itemize}
  \item $Cl_{\sigma,\mathsf{Ab}_\pi}(L+K)=Cl_{\sigma,\mathsf{Ab}_\pi}(L)+Cl_{\sigma,\mathsf{Ab}_\pi}(K)$;
  \item $Cl_{\sigma,\mathsf{Ab}_\pi}(L^{+})=\left<L\right>_{\sigma}$ where $\left<L\right>_{\sigma}$ denotes the $\sigma$-subalgebra of $\Omega_A^{\sigma}\mathsf{Ab}_\pi$ generated by $L$.
\end{itemize}
 \end{corollary}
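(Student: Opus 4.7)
The plan is to derive both equalities from Theorem \ref{theorem3}, after expressing the relevant sets in semilinear form. For the first part, the inclusion $\supseteq$ follows from the continuity of addition in $\overline{\Omega}_A\mathsf{Ab}_\pi$. For the other inclusion, I would write $L=\bigcup_i(a_i+\sum_j b_{i,j}\mathbb{N})$ and $K=\bigcup_k(c_k+\sum_l d_{k,l}\mathbb{N})$; then the Minkowski sum is the finite union
$$L+K=\bigcup_{i,k}\Bigl((a_i+c_k)+\sum_j b_{i,j}\mathbb{N}+\sum_l d_{k,l}\mathbb{N}\Bigr),$$
so applying Theorem \ref{theorem3} to each of $L$, $K$ and $L+K$ yields matching semilinear expressions in ${\mathbb{Z}}_\pi^\sigma$, and distributing $+$ over $\bigcup$ identifies the result with $Cl_{\sigma,\mathsf{Ab}_\pi}(L)+Cl_{\sigma,\mathsf{Ab}_\pi}(K)$.

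For the second part, rational subsets of a monoid are closed under the $+$ operation, so $L^+$ is again semilinear. Writing $L$ as above and grouping each sum $\ell_1+\cdots+\ell_n\in L^+$ according to the index $i$ with $\ell_1\in a_i+\sum_j b_{i,j}\mathbb{N}$, one obtains
$$L^+=\bigcup_i\Bigl(a_i+\sum_{i'}a_{i'}\mathbb{N}+\sum_{i',j}b_{i',j}\mathbb{N}\Bigr).$$
Theorem \ref{theorem3} then gives $Cl_{\sigma,\mathsf{Ab}_\pi}(L^+)=\bigcup_i(a_i+H)$ with $H=\sum_i a_i\mathbb{Z}_\pi^\sigma+\sum_{i,j}b_{i,j}\mathbb{Z}_\pi^\sigma$, and since each $a_i$ already lies in $H$, this collapses to $H$ itself.

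It remains to identify $H$ with $\langle L\rangle_\sigma$. The key observation is that, in the additive setting, the image of $\sigma$ consists of $+$, $-$, and the unary operations $x\mapsto p^{\omega-1}x$ for $p\in P_\pi$; by Lemma \ref{lemma50} every scalar in $\mathbb{Z}_\pi^\sigma$ is built from $1$ via these operations, so a $\sigma$-subalgebra of $\Omega_A^\sigma\mathsf{Ab}_\pi$ is exactly a $\mathbb{Z}_\pi^\sigma$-submodule. Consequently $\langle L\rangle_\sigma$ coincides with the $\mathbb{Z}_\pi^\sigma$-submodule generated by $L$. Since each $a_i$ lies in $L$ and each $b_{i,j}=(a_i+b_{i,j})-a_i$ is a difference of two elements of $L$, one gets $H\subseteq\langle L\rangle_\sigma$; conversely every element of $L$ lies in $H$, so $\langle L\rangle_\sigma\subseteq H$.

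The main delicacy is this final identification of $\sigma$-subalgebras with $\mathbb{Z}_\pi^\sigma$-submodules, which rests on the explicit normal form in Lemma \ref{lemma50}; everything else is a bookkeeping exercise around the semilinear descriptions together with Theorem \ref{theorem3}.
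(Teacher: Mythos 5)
Your approach matches the paper's on both items: write the sets in semilinear form and push through Theorem~\ref{theorem3}. The first item is fine as you have it. For the second item, however, the displayed semilinear formula for $L^+$ is not an equality; it strictly contains $L^+$ in general. For instance, take $A=\{x,y,z\}$ and $L=\{x\}\cup(y+z\mathbb{N})$: your right-hand side, in the block with $a_i=x$ and coefficient $1$ on $z$, contains $x+z$, yet $x+z\notin L^+$, since any sum of elements of $L$ that introduces a $z$ must also introduce a $y$. (The paper's own line $L^{+}=\sum_{i}(a_{0,i}\mathbb{N}+\cdots+a_{n,i}\mathbb{N})$ suffers from the same defect.) The conclusion survives because your candidate set $T$ is sandwiched: $L^+\subseteq T\subseteq Cl_{\sigma,\mathsf{Ab}_\pi}(L^+)$. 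The second inclusion holds because $Cl_{\mathsf{Ab}_\pi}(L^+)$ is a closed subsemigroup, hence a closed subgroup, of the compact group $\overline{\Omega}_A\mathsf{Ab}_\pi$, so it contains each $a_i$ and each $b_{i,j}=(a_i+b_{i,j})-a_i$ and thus all of $T$; intersecting with $\Omega_A^\sigma\mathsf{Ab}_\pi$ preserves this. Therefore $Cl_{\sigma,\mathsf{Ab}_\pi}(T)=Cl_{\sigma,\mathsf{Ab}_\pi}(L^+)$, and Theorem~\ref{theorem3} applied to $T$ gives $Cl_{\sigma,\mathsf{Ab}_\pi}(L^+)=H$ as you want — but you should state this sandwich argument rather than assert the false set equality.

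Your final step, identifying $\sigma$-subalgebras of $\Omega_A^\sigma\mathsf{Ab}_\pi$ with $\mathbb{Z}_\pi^\sigma$-submodules via Lemma~\ref{lemma50} and thereby $H$ with $\langle L\rangle_\sigma$, is a genuine improvement on the paper, whose proof ends with ``the results follow from the preceding theorem'' and leaves this identification entirely implicit. To make it airtight you would want to note that for $n$ a product of primes in $P_\pi$, the scalar action $n^{\omega-k}(\cdot)$ factors as a composition of the basic unary operations $p^{\omega-1}(\cdot)$, which is exactly what Lemma~\ref{lemma50} provides.
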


\begin{proof}
Suppose the semilinear subsets $L$ and $K$ of the commutative monoid are written as $\displaystyle\bigcup_{i=1}^n \left(a_{0,i}+a_{1,i}\mathbb{N}+\cdots+a_{n,i}\mathbb{N}\right)$ and $\displaystyle\bigcup_{i=1}^m \left(b_{0,i}+b_{1,i}\mathbb{N}+\cdots+b_{m,i}\mathbb{N}\right)$, respectively. It is enough to observe that
 \begin{equation*}
  \begin{aligned}
  L+K&=\displaystyle\bigcup_{i=1}^n \left(a_{0,i}+a_{1,i}\mathbb{N}+\cdots+a_{n,i}\mathbb{N}\right)+\displaystyle\bigcup_{j=1}^m \left(b_{0,i}+b_{1,i}\mathbb{N}+\cdots+b_{m,i}\mathbb{N}\right)\\
  &=\displaystyle\bigcup_{i=1}^m\displaystyle\bigcup_{j=1}^m \left(a_{0,i}+b_{0,i}+a_{1,i}\mathbb{N}+\cdots+a_{n,i}\mathbb{N}+b_{1,i}\mathbb{N}+\cdots+b_{m,i}\mathbb{N}\right).
  \end{aligned}
 \end{equation*}
Since the subsemigroup generated by the union of the subsets $X$ and $Y$ of the free commutative monoid is $X^{+}+Y^{+}$, we have
$$L^{+}=\sum_{i=1}^{n}a_{0,i}\mathbb{N}+a_{1,i}\mathbb{N}+\cdots+a_{n,i}\mathbb{N}.$$
Now the results follow from the preceding theorem.
\end{proof}

\subsection{The pseudovariety  \texorpdfstring{$\mathsf{Ab}_\pi$}{TEXT}
  is \texorpdfstring{$\sigma$} {TEXT}-reducible}
As $(\overline{\Omega}_A\mathsf{S})^1$ and $\overline{\Omega}_A\mathsf{Ab}_\pi$ are compact spaces, for every rational subset $L$ of the free monoid, we have
\begin{equation}\label{eq21}
  \psi_{\mathsf{Ab}_\pi}(Cl(L))=Cl_{\mathsf{Ab}_\pi}(\psi_{\mathsf{Ab}_\pi}(L)).
\end{equation}
 Hence, the set $\psi_{\mathsf{Ab}_\pi}(Cl(L))\cap {\Omega}_A^\sigma\mathsf{Ab}_\pi$ is a closed set in ${\Omega}_A^\sigma\mathsf{Ab}_\pi$. We show that $\psi_{\mathsf{Ab}_\pi}(Cl_\sigma(L))=\psi_{\mathsf{Ab}_\pi}(Cl(L))\cap {\Omega}_A^\sigma\mathsf{Ab}_\pi$, and therefore, the pseudovariety $\mathsf{Ab}_\pi$ is $\sigma$-full.
\begin{theorem}
Let $L$ be a rational subset of $\Omega_A\mathsf{S}$. Then the following property holds:
\begin{equation}\label{pro}
   u\in \psi_{\mathsf{Ab}_\pi}(Cl(L))\cap \Omega_A^{\sigma}\mathsf{Ab}_\pi\Rightarrow \psi_{\mathsf{Ab}_\pi}^{-1}(u)\cap\Omega_A^{\sigma}\mathsf{S}\cap Cl(L)\neq\emptyset.
  \end{equation}
\end{theorem}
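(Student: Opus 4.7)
The strategy is to use the preceding subsections to reduce the problem to the abelian setting, and then to lift back to the free semigroup via a structural property of rational languages.

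First, the image $\psi_{\mathsf{Ab}_\pi}(L)$ is a rational subset of the $A$-generated free commutative monoid (as the homomorphic image of a rational set), and hence semilinear by Theorem~\ref{lemma47}; I write
\begin{equation*}
\psi_{\mathsf{Ab}_\pi}(L)=\bigcup_{i=1}^{n}\bigl(a_i+b_{i,1}\mathbb{N}+\cdots+b_{i,l_i}\mathbb{N}\bigr).
\end{equation*}
Combining \eqref{eq21}, Lemma~\ref{lemma43}, and Theorem~\ref{theorem3}, every $u\in\psi_{\mathsf{Ab}_\pi}(Cl(L))\cap\Omega_A^{\sigma}\mathsf{Ab}_\pi$ admits a representation
\begin{equation*}
u=a_i+b_{i,1}y_1+\cdots+b_{i,l_i}y_{l_i}
\end{equation*}
for some index $i$ and some $y_1,\ldots,y_{l_i}\in\mathbb{Z}_\pi^{\sigma}$.

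Next, I would invoke a refinement of Parikh's theorem for rational languages: for each linear component $a_i+\sum_j b_{i,j}\mathbb{N}$, there exist words $\alpha_0,\alpha_1,\ldots,\alpha_{l_i}\in A^*$ and $\beta_1,\ldots,\beta_{l_i}\in A^+$ such that the bounded language $\alpha_0\beta_1^{*}\alpha_1\beta_2^{*}\alpha_2\cdots\beta_{l_i}^{*}\alpha_{l_i}$ is contained in $L$, with $\psi_{\mathsf{Ab}_\pi}(\alpha_0\alpha_1\cdots\alpha_{l_i})=a_i$ and $\psi_{\mathsf{Ab}_\pi}(\beta_j)=b_{i,j}$. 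This is established by decomposing an accepting trajectory in a finite automaton for $L$ into a simple initial-to-final path together with simple cycles inserted along it, and then ordering the cycles by the state at which they are based. I expect this to be the principal obstacle, as it is a standard but not entirely routine refinement of Parikh's theorem specific to rational languages.

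Finally, I would lift each $y_j$ to a pseudoword $t_j\in\Omega_1^{\sigma}\mathsf{S}$ by reinterpreting its defining $\sigma$-term in $\overline{\Omega}_1\mathsf{S}$; then $t_j$ is a limit of natural numbers in $\overline{\Omega}_1\mathsf{S}$ and $\psi_{\mathsf{Ab}_\pi}(t_j)=y_j$. Setting
\begin{equation*}
w=\alpha_0\,\beta_1^{t_1}\,\alpha_1\,\beta_2^{t_2}\,\alpha_2\cdots\beta_{l_i}^{t_{l_i}}\,\alpha_{l_i},
\end{equation*}
each factor $\beta_j^{t_j}$ is the image of $t_j$ under the continuous $\sigma$-homomorphism $\overline{\Omega}_1\mathsf{S}\to\overline{\Omega}_A\mathsf{S}$ sending $1\mapsto\beta_j$, so $w\in\Omega_A^{\sigma}\mathsf{S}$. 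Approximating each $t_j$ by natural numbers exhibits $w$ as a limit of words in the bounded sub-language above, hence $w\in Cl(L)$. Commutativity in $\mathsf{Ab}_\pi$ together with functoriality of $\psi_{\mathsf{Ab}_\pi}$ then yield
\begin{equation*}
\psi_{\mathsf{Ab}_\pi}(w)=\sum_{k}\psi_{\mathsf{Ab}_\pi}(\alpha_k)+\sum_{j}y_j\,\psi_{\mathsf{Ab}_\pi}(\beta_j)=a_i+\sum_{j}b_{i,j}\,y_j=u,
\end{equation*}
so $w\in\psi_{\mathsf{Ab}_\pi}^{-1}(u)\cap\Omega_A^{\sigma}\mathsf{S}\cap Cl(L)$, which is the required lift.
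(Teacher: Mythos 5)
Your proof takes a genuinely different route from the paper's. The paper argues by structural induction on the rational operations that build $L$: Property~\eqref{pro} is trivial for finite sets, it is preserved under union and product because the $\sigma$-closures distribute via Corollary~\ref{cor5}, and the critical $L\mapsto L^{+}$ step is handled by invoking Lemma~\ref{lemma48} (Lemma~6.6 of \cite{Jorge:2002}) to replace $L$ by a finite set $Y$ of $\kappa$-words with $Cl(Y^{+})\subseteq Cl(L^{+})$ and $\langle Y\rangle=\langle L\rangle$, after which Theorem~\ref{theorem3} computes the closure of $\{\psi_{\mathsf{Ab}_\pi}(w_1),\ldots,\psi_{\mathsf{Ab}_\pi}(w_n)\}^{+}$ directly and the lift is a product of $\sigma$-powers of the $w_i$. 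You bypass the induction by decomposing $\psi_{\mathsf{Ab}_\pi}(L)$ into linear components and realizing each component as the commutative image of a bounded sub-language $\alpha_0\beta_1^{*}\cdots\beta_{l}^{*}\alpha_{l}\subseteq L$, which you then lift. The trade-off: the paper delegates the hard combinatorics to a prior result in the $\kappa$-tameness toolkit, while your approach concentrates the difficulty in one \enquote{refined Parikh} lemma. Once that lemma is available, the remainder of your argument (use of \eqref{eq21}, Lemma~\ref{lemma43}, Theorem~\ref{theorem3}, the lift of $y_j$ to $t_j\in\Omega_1^{\sigma}\mathsf{S}$ via the $x\mapsto x^{\omega-1}$ interpretation of negation, and the limit argument showing $w\in Cl(L)$) is sound.

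The gap is precisely that lemma, and your sketch of it is not adequate. You describe \enquote{a simple initial-to-final path together with simple cycles inserted along it, and then ordering the cycles by the state at which they are based}, which presumes every extracted simple cycle is based at a state on the simple path. That is false in general: simple cycles can be attached to states reached only through other cycles (nested ears). In that case the $\alpha_k$ in your bounded sub-language must contain fragments of the outer cycles as well as fragments of the path, the constant $a_i$ must absorb one full traversal of every cycle in the configuration, and one must check that such a bounded language is still contained in $L$. One must also verify that the finite union of the linear sets, over all configurations (simple path plus a connected set of simple cycles), actually covers $\psi_{\mathsf{Ab}_\pi}(L)$ — which requires arguing that every accepting run reduces by iterated cycle removal to a simple path plus a multiset of simple cycles whose underlying set is such a configuration. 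Finally, you wrote down an arbitrary semilinear decomposition of $\psi_{\mathsf{Ab}_\pi}(L)$ first and only afterwards asserted that each component comes from a bounded sub-language; this only works if you \emph{choose} the decomposition to be the automaton-derived one. None of this is deep, but it is exactly where the content of the theorem lives; either prove the refinement carefully or cite a precise source for it.
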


\begin{proof}
If $L$ is a finite set, then we have $Cl(L)=L$ and since $L\subseteq  \Omega_A\mathsf{S}\subseteq \Omega_A^{\sigma}\mathsf{S}$, Property \eqref{pro} holds.

As any rational subset of the free semigroup can be obtained by taking a finite number of finite subsets of the free semigroup and applying the union, product and the plus operation $L\rightarrow L^{+}$  a finite number of times, it is enough to show that these operations preserve Property \eqref{pro}.

Suppose that $L_1$ and $L_2$ are rational subsets of $\Omega_A\mathsf{S}$ satisfy Property \eqref{pro}. First we  show that  $L_1\cup L_2$ and $L_1L_2$ satisfy this property. We have
\begin{equation*}
  \begin{aligned}
  \psi_{\mathsf{Ab}_\pi}(Cl(L_1\cup L_2))&=\psi_{\mathsf{Ab}_\pi}(Cl(L_1)\cup Cl(L_2))\\
  &=\psi_{\mathsf{Ab}_\pi}(Cl(L_1))\cup \psi_{\mathsf{Ab}_\pi}(Cl(L_2)).
  \end{aligned}
\end{equation*}
Hence, if $u\in \psi_{\mathsf{Ab}_\pi}(Cl(L_1\cup L_2))\cap\Omega_A^{\sigma}\mathsf{Ab}_\pi$, then at least one of the sets $\psi_{\mathsf{Ab}_\pi}^{-1}(u)\cap\Omega_A^{\sigma}\mathsf{S}\cap Cl(L_1)$  and  $\psi_{\mathsf{Ab}_\pi}^{-1}(u)\cap\Omega_A^{\sigma}\mathsf{S}\cap Cl(L_2)$ is nonempty and so, $\psi_{\mathsf{Ab}_\pi}^{-1}(u)\cap\Omega_A^{\sigma}\mathsf{S}\cap Cl(L_1\cup L_2)$ is nonempty.

 As $\psi_{\mathsf{Ab}_\pi}$ is a homomorphism, $\psi_{\mathsf{Ab}_\pi}(L_1)$ and $\psi_{\mathsf{Ab}_\pi}(L_2)$ are rational subsets. Hence, by Corollary \ref{cor5}, the following equalities hold:
\begin{equation}\label{eq13}
  \begin{aligned}
 \psi_{\mathsf{Ab}_\pi}(Cl(L_1L_2))\cap \Omega_A^{\sigma}\mathsf{Ab}_\pi&\stackrel{\eqref{eq21}}{=} Cl_{\mathsf{Ab}_\pi}(\psi_{\mathsf{Ab}_\pi}(L_1L_2))\cap \Omega_A^{\sigma}\mathsf{Ab}_\pi\\
 &= Cl_{\sigma,\mathsf{Ab}_\pi}( \psi_{\mathsf{Ab}_\pi}(L_1L_2))\\
 &= Cl_{\sigma,\mathsf{Ab}_\pi}( \psi_{\mathsf{Ab}_\pi}(L_1)+\psi_{\mathsf{Ab}_\pi}(L_2))\\
  &=Cl_{\sigma,\mathsf{Ab}_\pi}( \psi_{\mathsf{Ab}_\pi}(L_1))+Cl_{\sigma,\mathsf{Ab}_\pi}( \psi_{\mathsf{Ab}_\pi}(L_2)).
  \end{aligned}
\end{equation}
Let $u\in \psi_{\mathsf{Ab}_\pi}(Cl(L_1L_2))\cap \Omega_A^{\sigma}\mathsf{Ab}_\pi$. By \eqref{eq13}, there are
$$u_i\in Cl_{\sigma,\mathsf{Ab}_\pi}( \psi_{\mathsf{Ab}_\pi}(L_i))=\psi_{\mathsf{Ab}_\pi}(Cl(L_i))\cap \Omega_A^{\sigma}\mathsf{Ab}_\pi\ \ (i=1,2)$$
 such that $u=u_1+u_2$. By the
 induction hypotheses, there are $w_i\in \psi_{\mathsf{Ab}_\pi}^{-1}(u_i)\cap\Omega_A^{\sigma}\mathsf{S}\cap Cl(L_i)$. Let $w=w_1w_2\in \Omega_A^{\sigma}\mathsf{S}\cap Cl(L_1)Cl(L_2)\subseteq\Omega_A^{\sigma}\mathsf{S}\cap Cl(L_1L_2)$.  Then we have
 $$\psi_{\mathsf{Ab}_\pi}(w)=\psi_{\mathsf{Ab}_\pi}(w_1w_2)=\psi_{\mathsf{Ab}_\pi}(w_1)+\psi_{\mathsf{Ab}_\pi}(w_2)=u_1+u_2=u.$$
 Thus,  $w$ belongs to $\Omega_A^{\sigma}\mathsf{S}\cap Cl(L_1L_2)\cap \psi_{\mathsf{Ab}_\pi}^{-1}(u)$.

It remains to show that $L^{+}$ preserves  Property \eqref{pro}.  For this purpose, we use the following lemma.
\begin{lemma}\label{lemma2}\cite[Lemma 6.6]{Jorge:2002}\label{lemma48}
Let $L$ be a rational subset of $\Omega_A\mathsf{S}$. Then there is a (computable)
finite subset $Y$ of $\Omega_A^{\kappa}\mathsf{S}$ such that
\begin{enumerate}
  \item $Cl(Y^{+})\subseteq Cl(L^{+})$
  \item The subgroups  $\left<Y\right>$ and $\left<L\right>$ of the free group generated by $A$ are equal.  Consequently, the subgroups $\left<\psi_{\mathsf{Ab}_\pi}(Y)\right>$ and $\left<\psi_{\mathsf{Ab}_\pi}(L)\right>$ are equal.
\end{enumerate}
\end{lemma}
Let $L$ be a rational subset of the free semigroup satisfying Property \eqref{pro} and consider  $Y=\{w_1,\ldots,w_n\}\subseteq \Omega_A^{\kappa}\mathsf{S}$ such that $Y$ satisfies the properties in Lemma \ref{lemma2}.
     Applying Lemma \ref{lemma2} and observing the fact that  $Cl_{\mathsf{Ab}_\pi}(\psi_{\mathsf{Ab}_\pi}(L^{+}))=Cl_{\mathsf{Ab}_\pi}(\left<\psi_{\mathsf{Ab}_\pi}(L)\right>)$ we obtain that, to show that the rational subset $L^{+}$ satisfies Property \eqref{pro}, it is enough to show that $Y^{+}$ satisfies this property.

  By Theorem \ref{theorem3}, we have
 \begin{equation*}
  \begin{aligned}
\psi_{\mathsf{Ab}_\pi}(Cl(Y^{+}))\cap \Omega_A^{\sigma}\mathsf{Ab}_\pi&\stackrel{\eqref{eq21}}{=} Cl_{\mathsf{Ab}_\pi}(\psi_{\mathsf{Ab}_\pi}(Y)^{+})\cap \Omega_A^{\sigma}\mathsf{Ab}_\pi\\
&=Cl_{\mathsf{Ab}_\pi}(\{\psi_{\mathsf{Ab}_\pi}(w_1),\ldots,\psi_{\mathsf{Ab}_\pi}(w_n)\}^{+})\cap\Omega_A^{\sigma}\mathsf{Ab}_\pi\\
&=\psi_{\mathsf{Ab}_\pi}(w_1){\mathbb{Z}_\pi^{\sigma}}+\cdots+\psi_{\mathsf{Ab}_\pi}(w_n){\mathbb{Z}_\pi^{\sigma}}.
  \end{aligned}
  \end{equation*}
Let $u\in  \psi_{\mathsf{Ab}_\pi}(Cl(Y^{+}))\cap \Omega_A^{\sigma}\mathsf{Ab}_\pi$. There are $y_1,\ldots,y_n\in \mathbb{Z}_\pi^\sigma$ such that
 $$u=\psi_{\mathsf{Ab}_\pi}(w_1)y_1+\cdots+\psi_{\mathsf{Ab}_\pi}(w_n)y_n.$$
By Lemma \ref{lemma50}, $y_i$ can be written in  the following form:
 \begin{equation*}
\begin{aligned}
 & n_1^{\omega-k_1}a_1+ n_2^{\omega-k_2}a_2+\cdots+n_m^{\omega-k_m}a_m+a_0,
\end{aligned}
\end{equation*}
 with the following conditions:
\begin{enumerate}
  \item  for every $j$ ($1\leq j\leq m$), $n_j$ and $k_j$ are natural numbers;
  \item  for every $i$ ($0\leq i\leq m$), $a_{i}$ is an integer number;
  \item for every prime number $p$ dividing $n_j$ ($1\leq j\leq m$), $p\in P_\pi$.
\end{enumerate}
   Let $y'_i=n_1^{\omega-k_1}b_1+ n_2^{\omega-k_2}b_2+\cdots+n_m^{\omega-k_m}b_m+b_0\in \Omega_1^\sigma\mathsf{S}$ with the following conditions ($0\leq j\leq m$):
\begin{equation*}
\begin{aligned}
b_j&=
\begin{cases}
a_j,\ \ \ \ \ \ \ \ \ \ \ \ \   \text{if $a_j\geq0$}\\
(\omega-1)|a_j|, \ \ \text{if}\  a_j<0
\end{cases}
\end{aligned}
\end{equation*}
    Let $w=w_1^{y'_1}\ldots w_n^{y'_n}$. Since $w_i\in Cl(Y^{+})$ and the closure of a subsemigroup is again a subsemigroup, $w_i^{y'_i}$ is in  $Cl(Y^{+})\cap \Omega_A^{\sigma}\mathsf{S}$. As $ \psi_{\mathsf{Ab}_\pi}$ is a homomorphism, we have $\psi_{\mathsf{Ab}_\pi}(w)=u$.
Hence, $w$ belongs to $\psi_{\mathsf{Ab}_\pi}^{-1}(u)\cap Cl(Y^{+})\cap \Omega_A^{\sigma}\mathsf{S}$.
This proves the theorem.
\end{proof}

\begin{corollary}\label{corollary1}
The pseudovariety $\mathsf{Ab}_\pi$ is $\sigma$-full.
\end{corollary}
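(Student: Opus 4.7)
The plan is to deduce this corollary directly from the theorem just proved by showing the identity
\[
\psi_{\mathsf{Ab}_\pi}\!\bigl(Cl_{\sigma}(L)\bigr)=\psi_{\mathsf{Ab}_\pi}\!\bigl(Cl(L)\bigr)\cap \Omega_A^{\sigma}\mathsf{Ab}_\pi,
\]
for every rational subset $L\subseteq A^{*}$, and then observing that the right-hand side is visibly closed in $\Omega_A^{\sigma}\mathsf{Ab}_\pi$.

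First I would dispose of the topological half. Since $Cl(L)$ is a closed subset of the compact Hausdorff space $\overline{\Omega}_A\mathsf{S}$, it is compact; the continuous image $\psi_{\mathsf{Ab}_\pi}(Cl(L))$ is therefore compact, hence closed in $\overline{\Omega}_A\mathsf{Ab}_\pi$. Intersecting with $\Omega_A^{\sigma}\mathsf{Ab}_\pi$ (which carries the subspace topology) yields a closed subset of $\Omega_A^{\sigma}\mathsf{Ab}_\pi$.

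Next I would establish the displayed equality. The inclusion $\subseteq$ is immediate: if $u=\psi_{\mathsf{Ab}_\pi}(w)$ with $w\in Cl_\sigma(L)\subseteq Cl(L)\cap \Omega_A^\sigma\mathsf{S}$, then $u$ lies both in $\psi_{\mathsf{Ab}_\pi}(Cl(L))$ and, being the image of a $\sigma$-term, in $\Omega_A^{\sigma}\mathsf{Ab}_\pi$. For the reverse inclusion, take $u\in \psi_{\mathsf{Ab}_\pi}(Cl(L))\cap \Omega_A^{\sigma}\mathsf{Ab}_\pi$. The preceding theorem supplies some
\[
w\in \psi_{\mathsf{Ab}_\pi}^{-1}(u)\cap \Omega_A^{\sigma}\mathsf{S}\cap Cl(L).
\]
Because $\Omega_A^{\sigma}\mathsf{S}$ has the subspace topology inherited from $\overline{\Omega}_A\mathsf{S}$, one has $Cl_\sigma(L)=Cl(L)\cap\Omega_A^{\sigma}\mathsf{S}$, so $w\in Cl_\sigma(L)$ and hence $u=\psi_{\mathsf{Ab}_\pi}(w)\in \psi_{\mathsf{Ab}_\pi}(Cl_\sigma(L))$. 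Combining the two halves gives the advertised equality, and together with the first paragraph this shows that $\psi_{\mathsf{Ab}_\pi}(Cl_\sigma(L))$ is closed in $\Omega_A^{\sigma}\mathsf{Ab}_\pi$, which is exactly the definition of $\sigma$-fullness.

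I expect no real obstacle: the heavy lifting has already been done by the preceding theorem, which provided the non-trivial lifting property \eqref{pro}. The only subtle point worth double-checking is the identification $Cl_\sigma(L)=Cl(L)\cap\Omega_A^{\sigma}\mathsf{S}$, which is a standard consequence of $\Omega_A^{\sigma}\mathsf{S}$ being endowed with the subspace topology from $\overline{\Omega}_A\mathsf{S}$.
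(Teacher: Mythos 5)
Your proof is correct and follows essentially the same route as the paper: establish the equality $\psi_{\mathsf{Ab}_\pi}(Cl_\sigma(L))=\psi_{\mathsf{Ab}_\pi}(Cl(L))\cap\Omega_A^{\sigma}\mathsf{Ab}_\pi$ (the nontrivial inclusion coming from the lifting property of the preceding theorem, via the standard subspace-topology identity $Cl_\sigma(L)=Cl(L)\cap\Omega_A^{\sigma}\mathsf{S}$), then note that the right-hand side is closed by compactness and continuity of $\psi_{\mathsf{Ab}_\pi}$. The paper states part of this reasoning in the paragraph immediately preceding the theorem and leaves the corollary proof terse, but the content is the same.
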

\begin{proof}
  By the preceding theorem, for every rational subset $L$ of $\Omega_A\mathsf{S}$, the subset  $\psi_{\mathsf{Ab}_{\pi}}(Cl(L))\cap \Omega_A^\sigma\mathsf{Ab}_\pi$ is contained in $\psi_{\mathsf{Ab}_{\pi}}(Cl_\sigma(L))$. The reverse  inclusion is trivial. So, the subset $\psi_{\mathsf{Ab}_{\pi}}(Cl_\sigma(L))$ is closed in  $\Omega_A^\sigma\mathsf{Ab}_\pi$ and therefore,  $\mathsf{Ab}_\pi$ is $\sigma$-full.
\end{proof}

To complete the proof of Theorem \ref{theorem5}, it remains to show that the pseudovariety $\mathsf{Ab}_\pi$ is weakly $\sigma$-reducible.

\begin{theorem}\label{theorem6}
  The pseudovariety $\mathsf{Ab}_\pi$ is weakly $\sigma$-reducible with respect to all the systems of equations of $\sigma$-terms without parameters.
\end{theorem}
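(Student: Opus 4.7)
The plan is to transport the given system, via $\psi_{\mathsf{Ab}_\pi}$, into a linear system over $\widehat{\mathbb{Z}}_\pi$ whose coefficients lie in the subring $\mathbb{Z}_\pi^\sigma$, to apply Corollary \ref{cor4} to descend the solution into $\mathbb{Z}_\pi^\sigma$, and then to lift the resulting tuple back to $\overline{\Omega}_A\mathsf{S}$ using the rational constraints. Let a system $u_i=v_i$ ($i=1,\ldots,m$) of $\sigma$-terms in the variables $X$ be given, with constraints $L_x\subseteq A^*$, and let $\delta:\overline{\Omega}_X\mathsf{S}\to\overline{\Omega}_A\mathsf{S}$ be a solution modulo $\mathsf{Ab}_\pi$. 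Because every operation of $\sigma$ interprets additively in the commutative setting, with $()^{\omega-1}$ and each $()^{p^{\omega-1}}$ becoming multiplication by $-1$ and $p^{\omega-1}\in\mathbb{Z}_\pi^\sigma$ respectively, each $u_i$ evaluates in $\overline{\Omega}_A\mathsf{Ab}_\pi$ to a $\mathbb{Z}_\pi^\sigma$-linear combination of the variables. After moving the right-hand sides across, the system becomes a homogeneous linear system $B\mathbf{X}=0$ with $B\in M_{m\times|X|}(\mathbb{Z}_\pi^\sigma)$; by Lemma \ref{theorem1}, working coordinate-by-coordinate in $A$ turns this into $|A|$ parallel systems over $\widehat{\mathbb{Z}}_\pi$, of which $(\psi_{\mathsf{Ab}_\pi}(\delta(x)))_{x\in X}$ is a solution.

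Next I parametrise the constraints. The image $\psi_{\mathsf{Ab}_\pi}(L_x)$ is a rational subset of the $A$-generated free commutative monoid, hence semilinear by Theorem \ref{lemma47}, and by Lemma \ref{lemma43} combined with \eqref{eq21}, $\psi_{\mathsf{Ab}_\pi}(Cl(L_x))$ is a finite union of sets of the shape $a_x+b_{x,1}\widehat{\mathbb{Z}}_\pi+\cdots+b_{x,l_x}\widehat{\mathbb{Z}}_\pi$ with $a_x,b_{x,j}\in\mathbb{N}^A$. For each $x$ I select one component containing $\psi_{\mathsf{Ab}_\pi}(\delta(x))$ and write
\[
  \psi_{\mathsf{Ab}_\pi}(\delta(x))=a_x+\sum_{j=1}^{l_x}b_{x,j}\,z_{x,j},\qquad z_{x,j}\in\widehat{\mathbb{Z}}_\pi.
\]
Substituting these expressions into $B\mathbf{X}=0$ produces a new linear system $B'\mathbf{Z}=C'$ in the unknowns $z_{x,j}$ whose coefficient matrix $B'$ and right-hand side $C'$ have all entries in $\mathbb{Z}_\pi^\sigma$ (since $a_x,b_{x,j}$ are integer vectors, $B$ already has entries in $\mathbb{Z}_\pi^\sigma$, and $\mathbb{Z}_\pi^\sigma$ is a subring of $\widehat{\mathbb{Z}}_\pi$), and which by construction admits $(z_{x,j})$ as a solution in $\widehat{\mathbb{Z}}_\pi$. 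Corollary \ref{cor4} then yields a solution $(z'_{x,j})$ with every $z'_{x,j}\in\mathbb{Z}_\pi^\sigma$.

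Finally, set $t_x:=a_x+\sum_j b_{x,j}\,z'_{x,j}$. By Lemma \ref{theorem1} each $t_x$ lies in $\Omega_A^\sigma\mathsf{Ab}_\pi$, and $t_x$ still belongs to the chosen component of $\psi_{\mathsf{Ab}_\pi}(Cl(L_x))$, so there is $\delta'(x)\in Cl(L_x)$ with $\psi_{\mathsf{Ab}_\pi}(\delta'(x))=t_x$; extending $\delta'$ by the universal property gives a continuous homomorphism $\overline{\Omega}_X\mathsf{S}\to\overline{\Omega}_A\mathsf{S}$. Since $(t_x)_x$ satisfies $B\mathbf{X}=0$ in $\overline{\Omega}_A\mathsf{Ab}_\pi$ by construction, the pseudoidentities $\delta'(u_i)=\delta'(v_i)$ hold in $\mathsf{Ab}_\pi$, and $\psi_{\mathsf{Ab}_\pi}(\delta'(x))\in\Omega_A^\sigma\mathsf{Ab}_\pi$, as required for weak $\sigma$-reducibility. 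The main obstacle I expect is verifying that the substituted system $B'\mathbf{Z}=C'$ really has all its coefficients inside $\mathbb{Z}_\pi^\sigma$ rather than merely in $\widehat{\mathbb{Z}}_\pi$: this is precisely what triggers Corollary \ref{cor4}, and it hinges on the integrality of the semilinear data $(a_x,b_{x,j})$, a delicate consequence of Theorem \ref{lemma47} together with the fact that rational sets of $A^*$ project to rational sets of the free commutative monoid.
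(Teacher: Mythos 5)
Your proposal is correct and follows essentially the same route as the paper's proof: both convert the system of $\sigma$-equations into a $\mathbb{Z}_\pi^\sigma$-linear system using commutativity, parametrise the constraints via the semilinear decomposition of Lemma~\ref{lemma43}, substitute to obtain an affine system $B'\mathbf{Z}=C'$ with coefficients in $\mathbb{Z}_\pi^\sigma$, and invoke Corollary~\ref{cor4} to descend the solution to $\mathbb{Z}_\pi^\sigma$. Your write-up is somewhat more explicit than the paper's about the final lifting of the tuple $(t_x)_x$ back to a homomorphism $\delta'$ via the rational constraints, but this is just added detail on the same argument, not a different method.
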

\begin{proof}
Let $X$ be a finite set of variables ($|X|=k$). Consider  a system of equations of the form
\begin{equation}\label{1}
  u_i=v_i \ \ \ (i=1,\ldots,m),
\end{equation}
with constraints $L_x\subseteq \Omega_{A}\mathsf{S}$ and  $u_i,v_i\in\Omega_{X}^\sigma \mathsf{S}$.
Suppose that the system (\ref{1}) has a solution modulo  $\mathsf{Ab_\pi}$.
Hence, the system of equations
\begin{equation}\label{15}
  \psi_{\mathsf{Ab}_{\pi}}(u_i)+(\omega-1)\psi_{\mathsf{Ab}_{\pi}}(v_i)=0\ \ \ (i=1,\ldots,m),
\end{equation}
with constraints $ \psi_{\mathsf{Ab}_\pi}(x)\in  \psi_{\mathsf{Ab}_\pi}(Cl(L_x))$ has a solution in $\overline{\Omega}_A\mathsf{Ab}_\pi$.
 Since we take $u_i, v_i \in \Omega_X^\sigma \mathsf{S}$ and $\Omega_X^\sigma \mathsf{Ab}_\pi$ is a commutative algebra, the system \eqref{15} can be written as follows:
\begin{equation}\label{5}
  n_{i1} \psi_{\mathsf{Ab}_\pi}(x_{1})+n_{i2} \psi_{\mathsf{Ab}_\pi}(x_{2})+\cdots+n_{ik} \psi_{\mathsf{Ab}_\pi}(x_{k})=0 \ \ \ (i=1,\ldots,m \ \text{and}\ x_i\in X),
\end{equation}
with $n_{ij}\in \mathbb{Z}^\sigma_\pi$ under the constraints
\begin{equation}\label{6}
 \psi_{\mathsf{Ab}_\pi}(x_{i})\in \psi_{\mathsf{Ab}_\pi}(Cl(L_{x_i})) \ \ \ (i=1,\ldots,m ).
\end{equation}
By Lemma \ref{lemma43}, each constraint in~(\ref{6}) is a disjoint union of sets of the form
$$ a_{i0}+a_{i1}\widehat{\mathbb{Z}}_\pi+\cdots+a_{il}\widehat{\mathbb{Z}}_\pi,\ \ (a_{i0},\ldots ,a_{il}\in \mathbb{Z}^r).$$
So, every $ \psi_{\mathsf{Ab}_\pi}(x_{i})$ can be written as $a_{i0}+a_{i1}y_{i1}+\cdots+a_{il}y_{il}$ with ${y_{ij}}\in\widehat{\mathbb{Z}}_\pi$. By substituting  $ \psi_{\mathsf{Ab}_\pi}(x_{i})$ in the system of equations (\ref{5}), we obtain a linear system $BY=C$ with $B\in \mathrm{M}_{r\times lk}(\mathbb{Z}^\sigma_\pi)$, $Y$ a column vector of variables, and  $C\in (\mathbb{Z}^\sigma_\pi)^r$.
Since by the assumption  the system $BY=C$ has a solution in $\widehat{\mathbb{Z}}_\pi$, by Corollary \ref{cor4} it has a solution in $\mathbb{Z}^\sigma_\pi$. Hence, the system \eqref{1} has a solution $\delta:\overline{\Omega}_X\mathsf{S}\rightarrow\overline{\Omega}_A\mathsf{S}$ modulo $\mathsf{Ab}_\pi$ such that $ \psi_{\mathsf{Ab}_\pi}(\delta(x_i))\in Cl_{\sigma,\mathsf{Ab}_\pi}(L_{x_i})$. So, the pseudovariety $\mathsf{Ab}_\pi$ is weakly $\sigma$-reducible with respect to this system.
\end{proof}
\begin{corollary}
 Let $\pi$ be  a recursive supernatural number. Then the pseudovariety $\mathsf{Ab}_\pi$ is completely $\sigma$-reducible.
\end{corollary}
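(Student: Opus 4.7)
The plan is simply to assemble the pieces that have already been established in this section together with the two general principles recorded in Section 2. By Theorem~\ref{theorem6}, the pseudovariety $\mathsf{Ab}_\pi$ is weakly $\sigma$-reducible with respect to every system of $\sigma$-terms without parameters, and by Corollary~\ref{corollary1} it is $\sigma$-full. Applying Proposition~\ref{proposition2} system-by-system, we conclude that $\mathsf{Ab}_\pi$ is $\sigma$-reducible with respect to every system of $\sigma$-terms without parameters. Finally, by Proposition~\ref{proposition1}, this suffices to upgrade $\sigma$-reducibility without parameters to complete $\sigma$-reducibility, giving the result.

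The only thing one needs to verify when carrying this out is that the quantifiers line up: Theorem~\ref{theorem6} gives weak $\sigma$-reducibility for the class of systems that Proposition~\ref{proposition1} requires, so Proposition~\ref{proposition2}, which transfers from weak to strong $\sigma$-reducibility under $\sigma$-fullness, can be invoked exactly on that class. There is no genuine obstacle here, since the heavy lifting has been done: $\sigma$-fullness was the content of Corollary~\ref{corollary1} (proved via the explicit description of the $\sigma$-closure of rational subsets in Theorem~\ref{theorem3}), and weak $\sigma$-reducibility without parameters was the content of Theorem~\ref{theorem6} (reduced to the solvability of linear systems over $\mathbb{Z}_\pi^\sigma$ via Corollary~\ref{cor4}).
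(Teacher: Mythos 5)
Your proof is correct and is essentially the argument the paper intends: combine Theorem~\ref{theorem6} (weak $\sigma$-reducibility for parameter-free systems of $\sigma$-terms) with Corollary~\ref{corollary1} ($\sigma$-fullness) via Proposition~\ref{proposition2} to get $\sigma$-reducibility for that class, then pass to complete $\sigma$-reducibility via Proposition~\ref{proposition1}. You are in fact slightly more careful than the paper's one-line proof, which cites only Proposition~\ref{proposition2}, Corollary~\ref{corollary1}, and Theorem~\ref{theorem6} and leaves the final appeal to Proposition~\ref{proposition1} implicit; making that step explicit, as you do, is the right thing to do.
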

\begin{proof}
The result follows from Proposition \ref{proposition2}, Corollary \ref{corollary1}, and Theorem \ref{theorem6}.
\end{proof}
\section*{Acknowledgments}
This work is part of the author's  preparation of a doctoral thesis under the supervision of Prof. Jorge Almeida, whose advice is gratefully acknowledged.

It was partially supported by the FCT Docoral Grant with reference (SFRH/ BD/98202/2013).
 It was also partially supported by CMUP (UID/MAT/00144/ 2013), which is funded by FCT (Portugal) with national (MEC) and European structural funds (FEDER), under the partnership agreement PT2020.
\bibliographystyle{amsplain}
\providecommand{\bysame}{\leavevmode\hbox to3em{\hrulefill}\thinspace}
\providecommand{\MR}{\relax\ifhmode\unskip\space\fi MR }
\providecommand{\MRhref}[2]{%
  \href{http://www.ams.org/mathscinet-getitem?mr=#1}{#2}
}
\providecommand{\href}[2]{#2}

\end{document}